\documentclass[a4paper,11pt]{article}

\usepackage{amsfonts} \addtolength{\headheight}{2pt}
\usepackage{amsmath,hhline,latexsym,mathrsfs}
\usepackage{amssymb}
\usepackage{relsize}
\DeclareMathAlphabet\mathbfcal{OMS}{cmsy}{b}{n}

\usepackage[utf8]{inputenc}
\usepackage[T1]{fontenc}

\usepackage{textcomp}
\usepackage{mathtools,amssymb,amsthm}
\usepackage{lmodern}

\usepackage[a4paper, margin=2.5cm]{geometry}

\usepackage{graphicx}
\usepackage[dvipsnames]{xcolor}
\usepackage{array}
\usepackage{calc}
\usepackage{placeins}
\usepackage{titlesec}
\usepackage{titletoc}
\usepackage{fancyhdr}
\usepackage{titling}
\usepackage{enumitem,hhline,latexsym,epsfig}
\usepackage{eufrak}
\usepackage{accents}
\usepackage{bbm}
\usepackage{float}

\usepackage{hyperref}
\hypersetup{pdfstartview=XYZ}

\usepackage{caption}
\usepackage{tikz,tkz-tab}
\usetikzlibrary{math}
\usetikzlibrary{calc}

\theoremstyle{theorem}
\newtheorem{theorem}{Theorem}

\theoremstyle{theorem}
\newtheorem{proposition}{Proposition}

\theoremstyle{theorem}
\newtheorem{corollary}{Corollary}

\theoremstyle{theorem}
\newtheorem{lemma}{Lemma}

\theoremstyle{definition}
\newtheorem{definition}{Definition}

\theoremstyle{theorem}
\newtheorem{remark}{Remark}

\DeclareMathOperator{\Card}{Card}
\DeclareMathOperator{\Vect}{Vect}
\DeclareMathOperator{\Sp}{Sp}
\DeclareMathOperator{\Diag}{Diag}

\DeclareMathOperator{\diam}{diam}


\begin{document}

\title{Uniform observability of the one-dimensional wave equation for non-cylindrical domains. \\
Application to the control's support optimization.}

\author{\textsc{Arthur Bottois} \and \textsc{Nicolae Cîndea} \and \textsc{Arnaud Münch}\thanks{Université Clermont Auvergne, Laboratoire de Mathématiques Blaise Pascal CNRS-UMR 6620, Campus des Cézeaux, F-63178 Aubière cedex, France. \newline
\href{mailto:Arthur.Bottois@uca.fr}{Arthur.Bottois@uca.fr}, \href{mailto:Nicolae.Cindea@uca.fr}{Nicolae.Cindea@uca.fr} and \href{mailto:Arnaud.Munch@uca.fr}{Arnaud.Munch@uca.fr}}}

\maketitle

\begin{abstract}
This work is concerned with the distributed controllability of the one-dimensional wave equation over non-cylindrical domains. The controllability in that case has been obtained in \textit{[Castro-Cîndea-Münch, Controllability of the
  linear one-dimensional wave equation with inner moving forces, SIAM J. Control Optim 2014]} for domains satisfying the usual geometric optics condition. In the present work, we first show that the corresponding observability property holds true uniformly in a precise class of non-cylindrical domains. Within this class, we then consider, for a given initial datum, the problem of the optimization of the control support and prove its well-posedness. Numerical experiments are then discussed and highlight the influence of the initial condition on the optimal domain.
\end{abstract}

\section{Introduction}
\label{sect:intro}

This work is concerned with the distributed controllability of the one-dimensional wave equation. We define the space domain $\Omega = (0,1)$, the controllability time $T > 0$ and the space-time domain $Q_T = \Omega \times (0,T)$, with $\Sigma_T = \partial \Omega \times (0,T)$. Moreover, in the sequel we shall denote by $L = \partial_t^2 - \partial_x^2$ the one-dimensional wave operator.

The controllability problem for the one-dimensional wave equation reads as follows: for a given control domain $q \subset Q_T$, for every initial datum $(y_0,y_1) \in \mathbf{V} \coloneqq H^1_0(\Omega) \times L^2(\Omega)$, find a control $v \in L^2(q)$ such that the corresponding solution of the wave equation
\begin{equation}\label{eq:ctrl_wave}
    \left\{
    \begin{array}{ll}
        L y = v \mathbbm{1}_q & \text{in } Q_T, \\
        y = 0 & \text{on } \Sigma_T, \\
        (y,y_t)(\cdot,0) = (y_0,y_1) & \text{in } \Omega
    \end{array}
    \right.
\end{equation}
satisfies
\begin{equation}\label{eq:y_T}
    (y,y_t)(\cdot,T) = (0,0) \quad \text{in } \Omega.
\end{equation}
The application $\mathbbm{1}_q$ denotes the characteristic function of $q$. We recall that for every $(y_0,y_1) \in \mathbf{V}$ and $v \in L^2(q)$, there exists a unique solution $y$ to~\eqref{eq:ctrl_wave} with the regularity $y \in C([0,T]; H^1_0(\Omega)) \cap C^1([0,T]; L^2(\Omega))$ (see for instance~\cite{Lions88}).

In the cylindrical case, \emph{i.e.} when $q = \omega \times (0,T)$, with $\omega \subset \Omega$ an open non empty interval, the exact controllability of~\eqref{eq:ctrl_wave} holds for controllability time $T$ greater than a critical time $T^*$, related to the measure of the set $\Omega \setminus \overline{\omega}$. In the non-cylindrical case, the controllability of~\eqref{eq:ctrl_wave} has been established in~\cite{CastroCindeaMunch14} if the control domain $q$ satisfies the usual \emph{geometric optics condition} (we also refer to~\cite{Lebeau, Shao} for results in any dimension). We recall that a domain $q$ verifies such condition if every characteristic line of the wave equation, starting from a point of $\overline{\Omega} \times \{ 0 \}$ and following the laws of geometric optics when reflected on the boundary $\Sigma_T$, meets the domain $q$.

In both cylindrical and non-cylindrical cases, the controllability of~\eqref{eq:ctrl_wave} can be proven by the \emph{Hilbert uniqueness method} (HUM) introduced by J.-L. Lions~\cite{Lions88}. The main idea of this method is to obtain the controllability as a consequence of an observability inequality for the adjoint problem associated to~\eqref{eq:ctrl_wave}: there exists a constant $C_\text{obs}(q) > 0$ such that
\begin{equation}\label{eq:obs_W}
    \| (\varphi_0, \varphi_1) \|_\mathbf{W}^2 \leq C_\text{obs}(q) \| \varphi \|_{L^2(q)}^2, \quad \forall (\varphi_0, \varphi_1) \in \mathbf{W} \coloneqq L^2(\Omega) \times H^{-1}(\Omega),
\end{equation}
where $\varphi$ is the solution of the following homogeneous wave equation
\begin{equation}\label{eq:adj_wave}
    \left\{
    \begin{array}{ll}
        L \varphi = 0 & \text{in } Q_T, \\
        \varphi = 0 & \text{on } \Sigma_T, \\
        (\varphi, \varphi_t)(\cdot,0) = (\varphi_0, \varphi_1) & \text{in } \Omega.
    \end{array}
    \right.
\end{equation}
We recall that for every $(\varphi_0, \varphi_1) \in \mathbf{W}$, there exists a unique solution $\varphi$ to~\eqref{eq:adj_wave} (defined in the sense of transposition) with the regularity $\varphi \in C([0,T]; L^2(\Omega)) \cap C^1([0,T]; H^{-1}(\Omega))$ (see, for instance,~\cite{Lions88}). We emphasize that the observability constant $C_\text{obs}$ appearing in~\eqref{eq:obs_W} depends on the observation domain $q$.

According to the HUM method, the control of minimal $L^2(q)$-norm is obtained as the restriction to $q$ of the solution $\varphi$ of~\eqref{eq:adj_wave} corresponding to the initial datum $(\varphi_0, \varphi_1)$ which minimize the functional
\begin{equation}\label{eq:J*}
    \mathcal{J}^\star(\varphi_0, \varphi_1) = \frac{1}{2} \iint_q \varphi^2 - \langle \varphi_1, y_0 \rangle_{H^{-1}(\Omega), H^1_0(\Omega)} + \langle \varphi_0, y_1 \rangle_{L^2(\Omega)} \quad \forall (\varphi_0, \varphi_1) \in \mathbf{W}.
\end{equation}
The existence and uniqueness of the minimum of the functional $\mathcal{J}^\star$ over the space $\mathbf{W}$ are mainly consequences of the observability inequality~\eqref{eq:obs_W}.

In the first part of this work, we provide a class of observation domains, based on the geometric condition considered in~\cite[Proposition~2.1]{CastroCindeaMunch14}, for which the observability constant $C_\text{obs}$ in~\eqref{eq:obs_W} is uniformly bounded. More precisely, for every $\varepsilon > 0$ small enough, we define the $\varepsilon$-interior of $q$ by
\begin{equation}\label{eq:q_eps}
    q^\varepsilon = \big\{ (x,t) \in q; \quad d((x,t), \partial q) > \varepsilon \big\},
\end{equation}
and the admissible set of control domains by
\begin{equation}\label{eq:Qad}
    \mathcal{Q}_\text{ad}^\varepsilon = \big\{ q \subset Q_T; \quad q \text{ open and } q^\varepsilon \text{ verifies the geometric optics condition} \big\}.
\end{equation}
We prove the following uniform observability inequality: there exists a constant $C_\text{obs}^\varepsilon$ such that for every $q \in \mathcal{Q}_\text{ad}^\varepsilon$,
\begin{equation}\label{eq:unif_obs_W}
    \| (\varphi_0, \varphi_1) \|_\mathbf{W}^2 \leq C_\text{obs}^\varepsilon \| \varphi \|_{L^2(q)}^2, \quad \forall (\varphi_0, \varphi_1) \in \mathbf{W},
\end{equation}
where $\varphi$ is the solution of~\eqref{eq:adj_wave} associated to the initial datum $(\varphi_0, \varphi_1)$.

This uniform property then allows, in a second part, to analyze the problem of the optimal distribution of the control domain $q$. Precisely, we consider controls acting on a horizontal neighborhood of a regular curve: for a given half-weight $\delta_0 > 0$, we define the domain associated to the curve $\gamma : (0,T) \to \Omega$ by
\begin{equation}\label{eq:q_gmm}
    q_\gamma = \big\{ (x,t) \in Q_T; \quad |x - \gamma(t)| < \delta_0 \big\}.
\end{equation}
 The curves $\gamma$ are chosen in the following set
\begin{equation}\label{eq:Gad}
    \mathcal{G}_\text{ad} = \big\{ \gamma \in W^{1,\infty}(0,T); \quad \| \gamma' \|_{L^\infty(0,T)} \leq M, \quad \delta_0 \leq \gamma \leq 1 - \delta_0 \big\}
\end{equation}
consisting of uniformly Lipschitz functions of fixed constant $M > 0$. For $T \geq 2$ and $\varepsilon > 0$ small enough, the class $\{ q_\gamma;\ \gamma \in \mathcal{G}_\text{ad} \}$ is a subset of $\mathcal{Q}_\text{ad}^\varepsilon$. The optimization problem we shall consider reads as follows: for a given initial datum $(y_0,y_1) \in \mathbf{V}$, solve
\begin{equation}\label{eq:opt_pb}
    \inf_{\gamma \in \mathcal{G}_\text{ad}} \| v \|_{L^2(q_\gamma)}^2,
\end{equation}
where $v$ is the control of minimal $L^2(q_{\gamma})$-norm distributed over $q_\gamma \subset Q_T$.

Controllability of partial differential equations by means of moving controls, although less studied than the cylindrical case, becomes more and more popular in the literature. One of the first contributions for the wave equation is due to Khapalov~\cite{Khapalov95}. The author proved an observability inequality for the one-dimensional wave equation with moving pointwise observation. This time-dependent observation allows to avoid the issue of strategic observation points and to get uniform controllability. More recently, the works~\cite{Castro13, CuiLiuGao13, HaakHoang19} addressed the controllability for the one-dimensional case. The controllability of the one-dimensional wave equation is proved for controls acting on an interior curve and on a moving boundary, by using d'Alembert's formula and the multiplier method respectively. For the $N$-dimensional case, in~\cite{LiuYong99} the authors employ the multiplier method to prove that the wave equation is controllable using a control acting on a time-dependent domain $q$, under the hypothesis that this domain covers the whole space domain before the control time $T$. Under similar hypotheses, we also mention the work~\cite{MartinRosierRouchon13} where the control of the damped wave equation $y_{tt} - y_{xx} - \varepsilon y_{txx} = 0$ defined on the 1D torus is obtained in a non-cylindrical case. Because of the presence of an essential spectrum, such property does not hold true in the cylindrical case. Moreover, assuming the standard geometric optics condition, the observability inequality has been obtained in~\cite{CastroCindeaMunch14} in dimension one by way of d'Alembert's formula, and extended in~\cite{Lebeau} to the multi-dimensional case using microlocal analysis. It is also worth mentioning the obtention of Carleman type inequality for general hyperbolic equations in~\cite{Shao}.

On the other hand, in the cylindrical situation, the uniform observability property for the wave equation with respect to the observation domain is addressed in~\cite{Periago09}. For $T \geq 2$, the author proves, using Fourier series, a uniform observability inequality for domains of the form $q = \omega \times (0,T)$, with $\omega \subset \Omega$ an open set of fixed length. The uniform property is then employed to analyze the optimal position of the support of the corresponding null control. This problem of the optimal shape and position of the support is also numerically investigated in~\cite{Munch08, Munch09} for the one and two dimensional wave equation. In a similar context, we also mention~\cite{Humbert} and the references therein.

This paper is organized as follows. In Section~\ref{sect:unif_obs} we prove the uniform observability inequality~\eqref{eq:unif_obs_W} on $\mathcal{Q}_\text{ad}^\varepsilon$ and its variant on the subset $\mathcal{G}_\text{ad}$. This is achieved by defining an appropriate decomposition of the observation domains in $\mathcal{Q}_\text{ad}^\varepsilon$, and by using d'Alembert's formula. The proof also relies on arguments from graph theory. Then, in Section~\ref{sect:shp_opt}, following arguments from~\cite{HenrotPierre05, Periago09}, we analyze a variant of the extremal problem~\eqref{eq:opt_pb}. Introducing a $C^1$-regularization of the support $q_\gamma$, we prove that the underlying cost is continuous over $\mathcal{G}_\text{ad}$ for the $L^\infty(0,T)$-norm, and admits at least one local minimum. Section~\ref{sect:num_simu} is concerned with numerical experiments. Minimization sequence for the regularized cost are constructed using a gradient method: each iteration requires the computation of a null control, performed using the space-time formulation developed in \cite{CindeaMunch15} and used in \cite{CastroCindeaMunch14}, and well-suited to the description of the non-cylindrical domains, where the control acts.

\section{Uniform observability with respect to the domain of observation}
\label{sect:unif_obs}

We prove in this section the uniform observability inequality~\eqref{eq:unif_obs_W} with respect to the domain of observation. Precisely, we prove the following equivalent result for regular data in $\mathbf{V}$.

\begin{theorem}\label{thm:unif_obs}
Let $T > 0$ and let $\varepsilon > 0$ be a small enough fixed parameter such that the set $\mathcal{Q}_\text{ad}^\varepsilon$ defined by~\eqref{eq:Qad} is non-empty. There exists a constant $C_\text{obs}^\varepsilon > 0$ such that for every $q \in \mathcal{Q}_\text{ad}^\varepsilon$, the following inequality holds
\begin{equation}\label{eq:unif_obs_V}
    \| (\varphi_0, \varphi_1) \|_\mathbf{V}^2 \leq C_\text{obs}^\varepsilon \| \varphi_t \|_{L^2(q)}^2, \quad \forall (\varphi_0, \varphi_1) \in \mathbf{V},
\end{equation}
where $\varphi$ is the solution of the wave equation~\eqref{eq:adj_wave} associated to the initial datum $(\varphi_0, \varphi_1)$.
\end{theorem}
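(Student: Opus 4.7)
My plan is to reduce the inequality, via d'Alembert's formula, to a quantitative Poincaré-type estimate for the travelling-wave component of $\varphi$, and then to establish that estimate through a cellular decomposition of $q$ whose parameters depend only on $\varepsilon$. After odd $2$-periodic extension across the boundary $\partial\Omega$, every solution of~\eqref{eq:adj_wave} admits the representation $\varphi(x,t)=f(x+t)-f(t-x)$ for a unique $f\in H^1_{\mathrm{loc}}(\mathbb{R})$ that is $2$-periodic, with the identity $\|(\varphi_0,\varphi_1)\|_\mathbf{V}^2=2\,\|f'\|_{L^2(0,2)}^2$. Passing to characteristic coordinates $\xi=x+t$, $\eta=t-x$ (Jacobian $1/2$) transforms the right-hand side of~\eqref{eq:unif_obs_V} into $\tfrac{1}{2}\iint_{\tilde q}(f'(\xi)-f'(\eta))^2\,d\xi\,d\eta$, where $\tilde q$ is the image of $q$. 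The theorem is thus equivalent to
\[
    \|f'\|_{L^2(0,2)}^2\le C_{\mathrm{obs}}^\varepsilon\iint_{\tilde q}\bigl(f'(\xi)-f'(\eta)\bigr)^2\,d\xi\,d\eta,
\]
uniformly in $q\in\mathcal{Q}_{\mathrm{ad}}^\varepsilon$. In the $(\xi,\eta)$-picture, the characteristics of $L$ are horizontal and vertical lines and the reflections on $\Sigma_T$ become simple identifications on the boundary of the parallelogram $\tilde Q_T$.

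The second step produces a rectangular skeleton inside $\tilde q$ that is uniform in $q$. Since $q\in\mathcal{Q}_{\mathrm{ad}}^\varepsilon$, the subdomain $\tilde q^\varepsilon$ satisfies the transported geometric optics condition and, by the very definition of $q^\varepsilon$, each of its points is the centre of an axis-aligned square of side $\sim\varepsilon$ contained in $\tilde q$. Covering the $\xi$-range $[0,2]$ by a family $\{I_j\}_{j=1}^J$ of disjoint intervals of length $c\varepsilon$ (so $J\sim\varepsilon^{-1}$), and likewise $\{K_l\}_{l=1}^L$ for the $\eta$-range, the geometric optics condition produces an assignment $j\mapsto\sigma(j)$ with $I_j\times K_{\sigma(j)}\subset\tilde q$, and symmetrically $l\mapsto\tau(l)$ with $I_{\tau(l)}\times K_l\subset\tilde q$. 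Viewing $\sigma,\tau$ as the edges of a bipartite multigraph $\mathcal{G}$ on $\{I_j\}\cup\{K_l\}$, every vertex has positive degree; a König/Hall-type argument then extracts a subfamily of rectangles in $\tilde q$ whose $\xi$- and $\eta$-projections cover $[0,2]$ with multiplicity bounded uniformly in terms of $\varepsilon$ and $T$ alone. This combinatorial extraction is the graph-theoretic content advertised in the introduction.

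On each extracted rectangle $I\times K\subset\tilde q$, the elementary inequality $(a-b)^2\ge\lambda a^2-\frac{\lambda}{1-\lambda}b^2$, valid for every $\lambda\in(0,1)$, gives
\[
    \iint_{I\times K}\bigl(f'(\xi)-f'(\eta)\bigr)^2\,d\xi\,d\eta\;\ge\;\lambda\,|K|\int_I|f'|^2\,d\xi\;-\;\tfrac{\lambda}{1-\lambda}\,|I|\int_K|f'|^2\,d\eta,
\]
together with the symmetric bound after swapping the roles of $\xi$ and $\eta$. Summing the first bound over the family $\{I_j\times K_{\sigma(j)}\}$ yields a lower bound of the form $c(\lambda)\varepsilon\|f'\|_{L^2(0,2)}^2$ minus a cross term whose prefactor is controlled by the multiplicity bound on $\mathcal{G}$; the analogous sum over $\{I_{\tau(l)}\times K_l\}$ produces the mirror estimate. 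Combining both inequalities with $\lambda$ chosen small and iterating a finite number of times (a bootstrap whose length is controlled by the multiplicity of $\mathcal{G}$), the cross terms absorb into the desired term and one obtains the uniform Poincaré-type inequality with a constant $C_{\mathrm{obs}}^\varepsilon$ that depends only on $\varepsilon$ and $T$.

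The principal obstacle is exactly this uniformity: every constant extracted above — the side length $c\varepsilon$ of the cells, the cardinalities $J,L$, the multiplicities and the depth of the bootstrap — must be bounded only in terms of $\varepsilon$ and $T$, never in terms of the specific geometry of $q$. The $\varepsilon$-thickness of $q^\varepsilon$ takes care of the local dimensions, but controlling the combinatorics of $\mathcal{G}$ — in particular ruling out pathological configurations in which an unbalanced assignment $\sigma$ would force the cross terms to dominate — requires a delicate quantitative reading of the geometric optics condition that tracks how characteristics reflect finitely many times on $\Sigma_T$ on the time horizon $[0,T]$. This is the step I would expect to require the most care.
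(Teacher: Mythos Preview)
Your reduction via d'Alembert and the passage to characteristic coordinates is correct and matches the paper; the target inequality $\|f'\|_{L^2(0,2)}^2\le C\iint_{\tilde q}(f'(\xi)-f'(\eta))^2$ is exactly what has to be proved, and the cellular decomposition with cells of side $\sim\varepsilon$ is the right skeleton. The graph-theoretic framing is also on target. The gap is in the final quantitative step.

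The absorption argument built on $(a-b)^2\ge\lambda a^2-\tfrac{\lambda}{1-\lambda}b^2$ does not close. Summing over your family $\{I_j\times K_{\sigma(j)}\}$ gives a lower bound of the form $\lambda c\varepsilon\|f'\|^2-\tfrac{\lambda}{1-\lambda}c\varepsilon\sum_j\int_{K_{\sigma(j)}}|f'|^2$, and the cross term is bounded by the \emph{same} quantity $\|f'\|^2$ times the multiplicity of $\sigma$, which is at least $1$; no choice of $\lambda\in(0,1)$ makes the difference positive, and iterating does not help because each step reproduces a cross term of the same strength. More fundamentally, your scheme never uses the fact that $\int_0^2 f'=0$ (built into the $2$-periodicity of $f$), yet without it the target inequality is false: take $f'\equiv\text{const}$. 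Relatedly, covering both projections is not sufficient; a bipartite graph in which every vertex has degree~$\ge 1$ can still be disconnected, and on a disconnected graph the Poincaré-type bound fails.

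The paper replaces your bootstrap by a direct spectral argument. The quantity $\sum_{\text{cells in }q}(\gamma_i-\gamma_j)^2$ is recognised as the Laplacian quadratic form $\gamma^{\!T}A_N(q)\gamma$ of a graph $G_N(q)$ on the index set $\mathbb I_N$, the geometric optics condition on $q^\varepsilon$ forces $G_N(q)$ to be \emph{connected} (this is the precise graph statement you were reaching for), and the zero-mean condition is exactly $\gamma\perp\mathbf 1_{2N}=\ker A_N(q)$, so the algebraic connectivity $\lambda_N(q)>0$ yields the lower bound in one stroke. Uniformity in $q$ then follows from a finiteness observation you do not have: for fixed $N=\lfloor 1/\varepsilon\rfloor+1$ there are only finitely many possible graphs $G_N(q)$, hence $\inf_q\lambda_N(q)$ is a minimum and strictly positive. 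A further refinement (passing from the mesh $S_N$ to $S_{pN}$ and analysing the spectrum of the blown-up Laplacian) handles general initial data by approximation. Your König/Hall extraction and the iterative absorption would have to be replaced by these two ingredients --- connectedness plus the spectral gap, and the finiteness of the graph family --- for the argument to go through.
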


In the remaining part of this section, we assume that the hypotheses of Theorem~\ref{thm:unif_obs} are satisfied.

\subsection{Some notations and technical lemmas}
\label{ssect:unif_obs_prelim}

We first introduce some notations and state some preliminary lemmas.

Let $N > 0$ be an integer and $\kappa_N = 1/N$. We denote $S_N = (x_i^N)_{0 \leq i \leq N}$ a regular subdivision of $\overline{\Omega}$ in $N$ intervals, \emph{i.e.} for every $i \in \{ 0,\dots,N \}$, we set $x_i^N = i/N$. Associated to the functions $\varphi_0 \in H^1_0(\Omega)$ and $\varphi_1 \in L^2(\Omega)$, we define the continuous function $\varphi_0^N$ affine on intervals $[x_{i-1}^N, x_i^N]$ of the subdivision $S_N$ and the function $\varphi_1^N$ constant on intervals of $S_N$:
\begin{align}
\label{eq:phi0_N}
    \varphi_0^N(x) & = \sum_{i=1}^N \left( \varphi_0(x_i^N) \frac{x - x_{i-1}^N}{\kappa_N} + \varphi_0(x_{i-1}^N) \frac{x_i^N - x}{\kappa_N} \right) \mathbbm{1}_{[x_{i-1}^N, x_i^N]}(x), \\
\label{eq:phi1_N}
    \varphi_1^N(x) & = \sum_{i=1}^N \beta_i^N \mathbbm{1}_{[x_{i-1}^N, x_i^N]}(x), \quad \text{with } \beta_i^N = \frac{1}{\kappa_N} \int_{x_{i-1}^N}^{x_i^N} \varphi_1.
\end{align}
We also denote by $(\varphi_0^N)' \in L^2(\Omega)$ the ``derivative'' of $\varphi_0^N$:
\begin{equation}\label{eq:phi0'_N}
    (\varphi_0^N)'(x) = \sum_{i=1}^N \alpha_i^N \mathbbm{1}_{[x_{i-1}^N, x_i^N]}(x), \quad \text{with } \alpha_i^N = \frac{\varphi_0(x_i^N) - \varphi_0(x_{i-1}^N)}{\kappa_N}.
\end{equation}
Using that $\varphi_0 \in H^1_0(\Omega)$, we then easily check that
\begin{equation}\label{eq:phi01_N_nrm}
    \| \varphi_0^N \|_{H^1_0(\Omega)}^2 = \frac{1}{N} \sum_{i=1}^N (\alpha_i^N)^2, \quad \| \varphi_1^N \|_{L^2(\Omega)}^2 = \frac{1}{N} \sum_{i=1}^N (\beta_i^N)^2 \quad \text{and} \quad \sum_{i=1}^N \alpha_i^N = 0.
\end{equation}

In order to use d'Alembert's formula for the solution of the wave equation~\eqref{eq:adj_wave} associated to initial datum $(\varphi_0^N, \varphi_1^N)$ of the form~\eqref{eq:phi0_N}-\eqref{eq:phi1_N}, we need to extend these functions to odd functions to $[-1,1]$ and then by $2$-periodicity to $\mathbb{R}$. In this respect, we first extend the definition of $x_i^N$ to $i \in \mathbb{Z}$ by putting $x_i^N = i/N$ for every $i \in \mathbb{Z}$, and then denote by $I_i^N$ for every $i \in \mathbb{Z}^*$ the following interval:
\begin{equation}\label{eq:Ii}
    I_i^N =
    \left\{
    \begin{array}{ll}
        [x_{i-1}^N, x_i^N] & \text{if } i > 0, \\[2mm]
        [x_i^N, x_{i+1}^N] & \text{if } i < 0.
    \end{array}
    \right.
\end{equation}
Similarly, we extend $\alpha_i^N$ and $\beta_i^N$ for every $i \in \mathbb{Z}^*$ as follows: if $i \in \{ -N,\dots,-1 \}$, we set $\alpha_i^N = \alpha_{-i}^N$ and $\beta_i^N = -\beta_{-i}^N$; if $|i| > N$, the definitions of $\alpha_i^N$ and $\beta_i^N$ are a little more complex:
\begin{equation*}
    \alpha_i^N = \alpha_{\mathfrak{j}_N(i)}^N, \qquad \beta_i^N = \beta_{\mathfrak{j}_N(i)}^N,
\end{equation*}
with $\mathfrak{j}_N(i)$ defined for every integer $i \geq 1$ by
\begin{equation}\label{eq:j}
    \mathfrak{j}_N(i) =
    \left\{
    \begin{array}{ll}
        (i-1) \mod(2N) + 1 & \text{if } (i-1) \mod(2N) < N, \\
        (i-1) \mod(2N) - 2N & \text{if } (i-1) \mod(2N) \geq N,
    \end{array}
    \right.
\end{equation}
and $\mathfrak{j}_N(i) = -\mathfrak{j}_N(-i)$ if $i \leq -1$. Remark that for every $i \in \mathbb{Z}^*$, we have $\mathfrak{j}_N(i) \in \mathbb{I}_N$ with the set $\mathbb{I}_N$ given by
\begin{equation}\label{eq:IN}
    \mathbb{I}_N = \{ -N,\dots,-1, 1,\dots,N \}.
\end{equation}
For $i,j \in \mathbb{I}_N$, we also define
\begin{equation}\label{eq:gmm_N}
    \gamma_i^N = \alpha_i^N + \beta_i^N.
\end{equation}

We extend the functions $\varphi_0^N$ and $\varphi_1^N$ to odd functions on $[-1,1]$ and by $2$-periodicity to $\mathbb{R}$. Then, using the notations above, we obtain
\begin{equation}\label{eq:phi01_N_ext}
    (\varphi_0^N)'(x) = \sum_{i \in \mathbb{Z}^*} \alpha_i^N \mathbbm{1}_{I_i^N}(x), \qquad \varphi_1^N(x) = \sum_{i \in \mathbb{Z}^*} \beta_i^N \mathbbm{1}_{I_i^N}(x), \qquad \forall x \in \mathbb{R}.
\end{equation}
Furthermore, from d'Alembert's formula, the solution $\varphi^N$ of~\eqref{eq:adj_wave} associated to the initial datum $(\varphi_0^N, \varphi_1^N)$ is given as follows:
\begin{equation}\label{eq:dAlembert}
    \varphi^N(x,t) = \frac{1}{2} \Big( \varphi_0^N(x+t) + \varphi_0^N(x-t) \Big) + \frac{1}{2} \int_{x-t}^{x+t} \varphi_1^N, \quad \forall (x,t) \in Q_T.
\end{equation}
Taking the derivative with respect to $t$ and replacing the expressions~\eqref{eq:phi01_N_ext} in the above equation, we deduce that for all $(x,t) \in Q_T$, we have
\begin{align}
\nonumber
    \varphi^N_t(x,t) & = \frac{1}{2} \Big( (\varphi_0^N)'(x+t) - (\varphi_0^N)'(x-t) + \varphi_1^N(x+t) + \varphi_1^N(x-t) \Big) \\
\nonumber
    & = \frac{1}{2} \sum_{i \in \mathbb{Z}^*} \Big( (\alpha_i^N + \beta_i^N) \mathbbm{1}_{I_i^N}(x+t) - (\alpha_i^N - \beta_i^N) \mathbbm{1}_{I_i^N}(x-t) \Big) \\
\label{eq:phit_N_1}
    & = \frac{1}{2} \sum_{i \in \mathbb{Z}^*} \sum_{j \in \mathbb{Z}^*} (\alpha_i^N + \beta_i^N - \alpha_j^N + \beta_j^N) \mathbbm{1}_{I_i^N}(x+t) \mathbbm{1}_{I_j^N}(x-t).
\end{align}
Using the properties of the function $\mathfrak{j}_N$ defined in~\eqref{eq:j}, we deduce that for $i,j \in \mathbb{Z}^*$,
\begin{align}
\nonumber
    \alpha_i^N + \beta_i^N - \alpha_j^N + \beta_j^N & = \alpha_{\mathfrak{j}_N(i)}^N + \beta_{\mathfrak{j}_N(i)}^N - \alpha_{\mathfrak{j}_N(j)}^N + \beta_{\mathfrak{j}_N(j)}^N \\
\nonumber
    & = (\alpha_{\mathfrak{j}_N(i)}^N + \beta_{\mathfrak{j}_N(i)}^N) - (\alpha_{-\mathfrak{j}_N(j)}^N + \beta_{-\mathfrak{j}_N(j)}^N) \\
\label{eq:phit_N_2}
    & = \gamma_{\mathfrak{j}_N(i)}^N - \gamma_{-\mathfrak{j}_N(j)}^N.
\end{align}

In view of the expression~\eqref{eq:phit_N_1}, we introduce the following definition.

\begin{definition}\label{def:Cij}
For every $i,j \in \mathbb{Z}^*$, the \emph{elementary square of indices $(i,j)$} associated to the subdivision $S_N$ is defined as the following closed set of $\mathbb{R}^2$:
\begin{equation}\label{eq:Cij}
    C_{(i,j)}^N = \big\{ (x,t) \in \mathbb{R}^2 \text{ such that } x + t \in I_i^N \text{ and } x - t \in I_j^N \big\},
\end{equation}
where for every $i \in \mathbb{Z}^*$, the interval $I_i^N$ is given by~\eqref{eq:Ii}. We denote by $\mathcal{C}_N = \{ C_{(i,j)}^N;\ i,j \in \mathbb{Z}^* \}$ the set of all the elementary squares associated to the subdivision $S_N$ of $\overline{\Omega}$. It is easy to see that $\mathbb{R}^2 = \bigcup_{i,j \in \mathbb{Z}^*} C_{(i,j)}^N$.
\end{definition}

Figure~\ref{fig:CN_Q} illustrates the way the elementary squares are indexed, using elementary squares associated to the subdivision $S_4$ of $\overline{\Omega}$.

\noindent
\begin{minipage}{0.48\textwidth}
\begin{figure}[H]
\centering
\begin{tikzpicture}[scale=2.7]
\tikzmath{\T = 2;}

\coordinate (i) at (1,0);
\coordinate (j) at (0,1);
\coordinate (ij) at (1,1);



\draw[red,line width=1] (0,0) -- (1,1);
\draw[red,line width=1] (0.25,0) -- (1,0.75);
\draw[red,line width=1] (0.5,0) -- (1,0.5);
\draw[red,line width=1] (0.75,0) -- (1,0.25);

\draw[red,line width=1] (0.25,0) -- (0,0.25);
\draw[red,line width=1] (0.5,0) -- (0,0.5);
\draw[red,line width=1] (0.75,0) -- (0,0.75);
\draw[red,line width=1] (1,0) -- (0,1);

\draw[red,line width=1] (0,0.25) -- (1,1.25);
\draw[red,line width=1] (0,0.5) -- (1,1.5);
\draw[red,line width=1] (0,0.75) -- (1,1.75);
\draw[red,line width=1] (0,1) -- (1,2);

\draw[red,line width=1] (1,0.25) -- (0,1.25);
\draw[red,line width=1] (1,0.5) -- (0,1.5);
\draw[red,line width=1] (1,0.75) -- (0,1.75);
\draw[red,line width=1] (1,1) -- (0,2);

\draw[red,line width=1] (0,1.25) -- (0.75,2);
\draw[red,line width=1] (0,1.5) -- (0.5,2);
\draw[red,line width=1] (0,1.75) -- (0.25,2);

\draw[red,line width=1] (1,1.25) -- (0.25,2);
\draw[red,line width=1] (1,1.5) -- (0.5,2);
\draw[red,line width=1] (1,1.75) -- (0.75,2);


\draw[line width=2] (0,0) rectangle (1,\T);
\path (0,0) -- (1,0) node[midway,below]{$x$};
\path (0,0) -- (0,\T) node[midway,left]{$t$};
\draw[line width=2] (0,0) -- ++($-0.025*(j)$) node[below]{$0$};
\draw[line width=2] (1,0) -- ++($-0.025*(j)$) node[below]{$1$};
\draw[line width=2] (0,0) -- ++($-0.025*(i)$) node[left]{$0$};
\draw[line width=2] (0,\T) -- ++($-0.025*(i)$) node[left]{$T = 2$};


\draw[->,>=latex,line width=1] (1.1,0.375) node[right]{$C_{(4,1)}^4$} -- (0.5,0.375);
\draw[->,>=latex,line width=1] (1.1,0.75) node[right]{$C_{(7,1)}^4$} -- (0.875,0.75);
\draw[->,>=latex,line width=1] (1.1,1.125) node[right]{$C_{(7,-3)}^4$} -- (0.5,1.125);
\end{tikzpicture}
\caption{Some elementary squares in $\mathcal{C}_4$.}
\label{fig:CN_Q}
\end{figure}
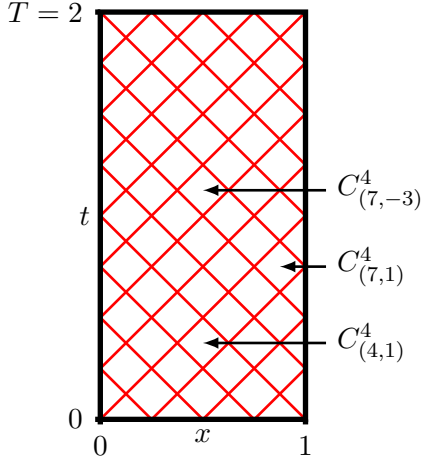
\end{minipage}
\hfill
\begin{minipage}{0.48\textwidth}
\begin{figure}[H]
\centering
\begin{tikzpicture}[scale=2.7]
\tikzmath{\T = 2;}

\coordinate (i) at (1,0);
\coordinate (j) at (0,1);
\coordinate (ij) at (1,1);



\path[fill=red!25] (0.375,0) -- ++(0.0625,0.0625) -- ++(0.0625,-0.0625) -- ++(0.0625,0.0625) -- ++(0.0625,-0.0625) -- ++(0.0625,0.0625) -- ++(-0.0625,0.0625) -- ++(0.0625,0.0625) -- ++(-0.0625,0.0625) -- ++(0.0625,0.0625) -- ++(-0.0625,0.0625) -- ++(0.0625,0.0625) -- ++(-0.0625,0.0625) -- ++(0.0625,0.0625) -- ++(-0.0625,0.0625) -- ++(0.0625,0.0625) -- ++(-0.0625,0.0625) -- ++(0.0625,0.0625) -- ++(-0.0625,0.0625) -- ++(0.0625,0.0625) -- ++(-0.0625,0.0625) -- ++(0.0625,0.0625) -- ++(-0.0625,0.0625) -- ++(0.0625,0.0625) -- ++(-0.0625,0.0625) -- ++(0.0625,0.0625) -- ++(-0.0625,0.0625) -- ++(0.0625,0.0625) -- ++(-0.0625,0.0625) -- ++(0.0625,0.0625) -- ++(-0.0625,0.0625) -- ++(0.0625,0.0625) -- ++(-0.0625,0.0625) -- ++(0.0625,0.0625) -- ++(-0.0625,0.0625) -- ++(0.0625,0.0625) -- ++(-0.0625,0.0625) -- ++(-0.0625,-0.0625) -- ++(-0.0625,0.0625) -- ++(-0.0625,-0.0625) -- ++(-0.0625,0.0625) -- ++(-0.0625,-0.0625) -- ++(0.0625,-0.0625) -- ++(-0.0625,-0.0625) -- ++(0.0625,-0.0625) -- ++(-0.0625,-0.0625) -- ++(0.0625,-0.0625) -- ++(-0.0625,-0.0625) -- ++(0.0625,-0.0625) -- ++(-0.0625,-0.0625) -- ++(0.0625,-0.0625) -- ++(-0.0625,-0.0625) -- ++(0.0625,-0.0625) -- ++(-0.0625,-0.0625) -- ++(0.0625,-0.0625) -- ++(-0.0625,-0.0625) -- ++(0.0625,-0.0625) -- ++(-0.0625,-0.0625) -- ++(0.0625,-0.0625) -- ++(-0.0625,-0.0625) -- ++(0.0625,-0.0625) -- ++(-0.0625,-0.0625) -- ++(0.0625,-0.0625) -- ++(-0.0625,-0.0625) -- ++(0.0625,-0.0625) -- ++(-0.0625,-0.0625) -- ++(0.0625,-0.0625) -- ++(-0.0625,-0.0625) -- ++(0.0625,-0.0625) -- ++(-0.0625,-0.0625) -- ++(0.0625,-0.0625) -- ++(-0.0625,-0.0625) -- ++(0.0625,-0.0625);


\draw[red,line width=1] (0,0) -- (1,1);
\draw[red,line width=1] (0.125,0) -- (1,0.875);
\draw[red,line width=1] (0.25,0) -- (1,0.75);
\draw[red,line width=1] (0.375,0) -- (1,0.625);
\draw[red,line width=1] (0.5,0) -- (1,0.5);
\draw[red,line width=1] (0.625,0) -- (1,0.375);
\draw[red,line width=1] (0.75,0) -- (1,0.25);
\draw[red,line width=1] (0.875,0) -- (1,0.125);

\draw[red,line width=1] (0.125,0) -- (0,0.125);
\draw[red,line width=1] (0.25,0) -- (0,0.25);
\draw[red,line width=1] (0.375,0) -- (0,0.375);
\draw[red,line width=1] (0.5,0) -- (0,0.5);
\draw[red,line width=1] (0.625,0) -- (0,0.625);
\draw[red,line width=1] (0.75,0) -- (0,0.75);
\draw[red,line width=1] (0.875,0) -- (0,0.875);
\draw[red,line width=1] (1,0) -- (0,1);

\draw[red,line width=1] (0,0.125) -- (1,1.125);
\draw[red,line width=1] (0,0.25) -- (1,1.25);
\draw[red,line width=1] (0,0.375) -- (1,1.375);
\draw[red,line width=1] (0,0.5) -- (1,1.5);
\draw[red,line width=1] (0,0.625) -- (1,1.625);
\draw[red,line width=1] (0,0.75) -- (1,1.75);
\draw[red,line width=1] (0,0.875) -- (1,1.875);
\draw[red,line width=1] (0,1) -- (1,2);

\draw[red,line width=1] (1,0.125) -- (0,1.125);
\draw[red,line width=1] (1,0.25) -- (0,1.25);
\draw[red,line width=1] (1,0.375) -- (0,1.375);
\draw[red,line width=1] (1,0.5) -- (0,1.5);
\draw[red,line width=1] (1,0.625) -- (0,1.625);
\draw[red,line width=1] (1,0.75) -- (0,1.75);
\draw[red,line width=1] (1,0.875) -- (0,1.875);
\draw[red,line width=1] (1,1) -- (0,2);

\draw[red,line width=1] (0,1.125) -- (0.875,2);
\draw[red,line width=1] (0,1.25) -- (0.75,2);
\draw[red,line width=1] (0,1.375) -- (0.625,2);
\draw[red,line width=1] (0,1.5) -- (0.5,2);
\draw[red,line width=1] (0,1.625) -- (0.375,2);
\draw[red,line width=1] (0,1.75) -- (0.25,2);
\draw[red,line width=1] (0,1.875) -- (0.125,2);

\draw[red,line width=1] (1,1.125) -- (0.125,2);
\draw[red,line width=1] (1,1.25) -- (0.25,2);
\draw[red,line width=1] (1,1.375) -- (0.375,2);
\draw[red,line width=1] (1,1.5) -- (0.5,2);
\draw[red,line width=1] (1,1.625) -- (0.625,2);
\draw[red,line width=1] (1,1.75) -- (0.75,2);
\draw[red,line width=1] (1,1.875) -- (0.875,2);


\draw[blue,line width=2] (0.3125,0) -- (0.3125,2);
\draw[blue,line width=2] (0.6875,0) -- (0.6875,2);

\draw[blue,dotted,line width=2] (0.4625,0.15) -- (0.5375,0.15) -- (0.5375,1.85) -- (0.4625,1.85) -- (0.4625,0.15);


\draw[line width=2] (0,0) rectangle (1,\T);
\path (0,0) -- (1,0) node[midway,below]{$x$};
\path (0,0) -- (0,\T) node[midway,left]{$t$};
\draw[line width=2] (0,0) -- ++($-0.025*(j)$) node[below]{$0$};
\draw[line width=2] (1,0) -- ++($-0.025*(j)$) node[below]{$1$};
\draw[line width=2] (0,0) -- ++($-0.025*(i)$) node[left]{$0$};
\draw[line width=2] (0,\T) -- ++($-0.025*(i)$) node[left]{$T = 2$};


\draw[->,>=latex,line width=1] (1.1,0.375) node[right]{$q^\varepsilon$} -- (0.5375,0.375);
\draw[->,>=latex,line width=1] (1.1,0.5625) node[right]{$R_8(q)$} -- (0.625,0.5625);
\draw[->,>=latex,line width=1] (1.1,0.75) node[right]{$q$} -- (0.6875,0.75);
\end{tikzpicture}
\caption{Cover $R_8(q)$ of $q^\varepsilon$, for $\varepsilon = 0.15$.}
\label{fig:RN_q}
\end{figure}
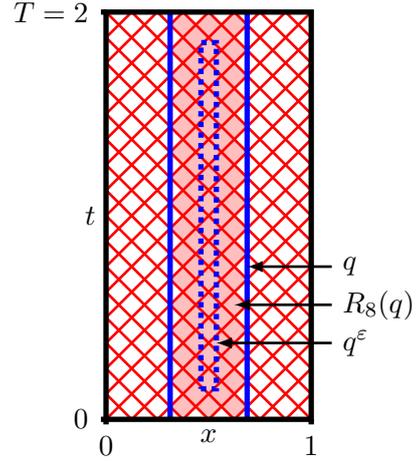
\end{minipage} \\

\begin{remark}\label{rmk:Cij}
For every $i,j \in \mathbb{Z}^*$, the coordinates of the center of the elementary square $C_{(i,j)}^N$ associated to the subdivision $S_N$ are given by
\begin{equation}\label{eq:Cij_ctr}
    \left\{
    \begin{array}{l}
        x_{(i,j)}^N = \frac{m_i^N + m_j^N}{2}, \\
        t_{(i,j)}^N = \frac{m_i^N - m_j^N}{2},
    \end{array}
    \right.
    \quad \text{with } m_i^N =
    \left\{
    \begin{array}{ll}
        \frac{x_{i-1}^N + x_i^N}{2} & \text{if } i > 0, \\
        \frac{x_i^N + x_{i+1}^N}{2} & \text{if } i < 0.
    \end{array}
    \right.
\end{equation}
The area of every elementary square $C_{(i,j)}^N \in \mathcal{C}_N$ is given by $|C_{(i,j)}^N| = \frac{1}{2 N^2}$. Notice that for every $i,j \in \mathbb{Z}^*$ with $|i|,|j| > 1$, the elementary squares having one side in common with the elementary square $C_{(i,j)}^N$ are $C_{(i \pm 1,j)}^N$ and $C_{(i,j \pm 1)}^N$.
\end{remark}

\begin{definition}\label{def:CN_q}
For every $q \in \mathcal{Q}_\text{ad}^\varepsilon$, we denote by $\mathcal{C}_N(q)$ and $\mathcal{C}_N(Q_T)$ the sets of the elementary squares in $\mathcal{C}_N$ with their interior included in $q$ and $Q_T$ respectively:
\begin{equation}\label{eq:CN_q}
    \mathcal{C}_N(q) = \big\{ C_{(i,j)}^N \in \mathcal{C}_N; \quad \accentset{\circ}{C}_{(i,j)}^N \subset q \big\}, \quad \mathcal{C}_N(Q_T) = \big\{ C_{(i,j)}^N \in \mathcal{C}_N; \quad \accentset{\circ}{C}_{(i,j)}^N \subset Q_T \big\}.
\end{equation}
If $N$ is large enough, the sets $\mathcal{C}_N(q)$ and $\mathcal{C}_N(Q_T)$ are non-empty. We also define $R_N(q)$ the union of the elementary squares in $\mathcal{C}_N(q)$:
\begin{equation}\label{eq:RN_q}
    R_N(q) = \overbrace{\bigcup_{C_{(i,j)}^N \in \mathcal{C}_N(q)} C_{(i,j)}^N}^\circ.
\end{equation}
\end{definition}

With these notations, we can now prove the following lemma.

\begin{lemma}\label{lem:RN_q}
Let $N > 1/\varepsilon$ be a fixed integer. For every $q \in \mathcal{Q}_\text{ad}^\varepsilon$, the set $\bigcup_{C_{(i,j)}^N \in \mathcal{C}_N(q)} C_{(i,j)}^N$ is a cover of $q^\varepsilon$ given by~\eqref{eq:q_eps}.
Moreover, the set $R_N(q)$ defined by~\eqref{eq:RN_q} satisfies $q^\varepsilon \subset R_N(q) \subset q$.
\end{lemma}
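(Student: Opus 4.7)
The plan is to deduce both inclusions from a single geometric observation: every elementary square of $\mathcal{C}_N$ has Euclidean diameter equal to $\kappa_N = 1/N$. This is best seen by passing to the characteristic coordinates $(u,v) = (x+t, x-t)$, in which $C_{(i,j)}^N$ becomes an axis-aligned square of side $\kappa_N$. The change of variables scales Euclidean distances by $\sqrt{2}$, so the diagonal $\kappa_N\sqrt{2}$ of the square in $(u,v)$ corresponds to a diameter $\kappa_N$ of $C_{(i,j)}^N$ in the $(x,t)$-plane. Under the hypothesis $N > 1/\varepsilon$, this diameter is strictly smaller than $\varepsilon$.

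To prove $q^\varepsilon \subset \bigcup_{C_{(i,j)}^N \in \mathcal{C}_N(q)} C_{(i,j)}^N$, fix $(x,t) \in q^\varepsilon$. By definition of $q^\varepsilon$ and openness of $q$, the open ball $B((x,t),\varepsilon)$ is contained in $q$: indeed, $d((x,t),\partial q) > \varepsilon$ implies that this ball meets neither $\partial q$ nor $\mathbb{R}^2\setminus\overline{q}$, so by connectedness it lies in $q$. Since $\mathbb{R}^2 = \bigcup_{i,j \in \mathbb{Z}^*} C_{(i,j)}^N$, there exists some elementary square $C = C_{(i,j)}^N$ with $(x,t) \in C$. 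From the diameter bound, every point of $C$ lies within distance $\kappa_N < \varepsilon$ of $(x,t)$, hence $C \subset B((x,t),\varepsilon) \subset q$. In particular $\accentset{\circ}{C} \subset q$, so $C \in \mathcal{C}_N(q)$ and $(x,t) \in \bigcup_{C \in \mathcal{C}_N(q)} C$, as required.

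For the final chain $q^\varepsilon \subset R_N(q) \subset q$, I first note that $q^\varepsilon$ is itself open, being the intersection of $q$ with the open sub-level set $\{d(\cdot,\partial q) > \varepsilon\}$ of the continuous distance function. Since $q^\varepsilon$ is an open subset of $\bigcup_{C \in \mathcal{C}_N(q)} C$, it is contained in the interior of this union, which is precisely $R_N(q)$; this gives the left inclusion. For the right one, each $C \in \mathcal{C}_N(q)$ satisfies $\accentset{\circ}{C} \subset q$ and is closed, hence $C = \overline{\accentset{\circ}{C}} \subset \overline{q}$; summing over the finitely many elementary squares intersecting $Q_T$ gives $R_N(q) \subset \overline{q}$, and since $R_N(q)$ is open we conclude $R_N(q) \subset q$. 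I do not anticipate a serious obstacle: the only point requiring care is the diameter computation in the characteristic coordinates, while the inclusions themselves are elementary point-set topology, combined with the tiling property of $\mathcal{C}_N$.
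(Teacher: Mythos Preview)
Your argument for the covering claim $q^\varepsilon \subset \bigcup_{C \in \mathcal{C}_N(q)} C$ is correct and essentially identical to the paper's: pick an elementary square containing the given point, observe that its diameter is $\kappa_N < \varepsilon$, and conclude that it lies in the $\varepsilon$-ball around that point, hence in $q$. Your use of characteristic coordinates to justify the diameter value is a nice touch that the paper leaves implicit. The paper's proof in fact stops there and does not argue the ``Moreover'' chain $q^\varepsilon \subset R_N(q) \subset q$ separately; your deduction of $q^\varepsilon \subset R_N(q)$ from the openness of $q^\varepsilon$ is correct and fills in that part.

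There is, however, a gap in your final step. From $R_N(q)$ open and $R_N(q) \subset \overline{q}$ you conclude $R_N(q) \subset q$, but an open subset of $\overline{q}$ is only guaranteed to lie in the interior of $\overline{q}$, which need not coincide with $q$ for a general open set. Concretely, take a large domain in $\mathcal{Q}_\text{ad}^\varepsilon$ and remove a single point $p$ that lies on the common edge of two elementary squares $C_1, C_2$; their interiors remain in $q$, so $C_1, C_2 \in \mathcal{C}_N(q)$, yet $p \in R_N(q) \setminus q$. Thus the strict inclusion $R_N(q) \subset q$ can fail for pathological $q$, and the paper does not prove it either. For the only downstream use (the inequality $\iint_q (\varphi_t^N)^2 \geq \iint_{R_N(q)} (\varphi_t^N)^2$ in the proof of Theorem~\ref{thm:unif_obs}) it suffices that $R_N(q) \setminus q$ be Lebesgue-null, and this always holds since $R_N(q) \setminus q \subset \bigcup_{C \in \mathcal{C}_N(q)} \partial C$. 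If you want a clean set-theoretic fix, either record that weaker statement or add the mild hypothesis that $q$ equals the interior of its closure.
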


\begin{proof}
Let $X \in q^\varepsilon$. Using the definition of $q^\varepsilon$, we have $X \in q$ and $d(X, \partial q) > \varepsilon$. Since $\mathbb{R}^2$ is covered by squares in $\mathcal{C}_N$, there exists $C_{(i,j)}^N \in \mathcal{C}_N$ such that $X \in C_{(i,j)}^N$. Moreover, since $\diam(C_{(i,j)}^N) = \kappa_N$, we have $C_{(i,j)}^N \subset \overline{B}(X,\kappa_N)$. Let $Y \in \overline{B}(X,\kappa_N)$. Then, for every $Z \in \mathbb{R}^2\setminus q$, it holds that
\begin{equation*}
    d(Y,Z) \geq |d(Y,X) - d(X,Z)| > \varepsilon - \kappa_N > 0.
\end{equation*}
Consequently, $d(Y,\mathbb{R}^2 \setminus q) > 0$, which implies $Y \in q$. Therefore, $C_{(i,j)}^N \subset \overline{B}(X,\kappa_N) \subset q$ and, finally, $C_{(i,j)}^N \in \mathcal{C}_N(q)$.
\end{proof}

Figure~\ref{fig:RN_q} illustrates Lemma~\ref{lem:RN_q} in the case of the cylindrical observation domain $q = (\frac{5}{16}, \frac{11}{16}) \times (0,2)$, for $\varepsilon = 0.15$ and $N = 8$. In order to write several expressions in a simpler form, we use the following graph theory framework.

\begin{definition}\label{def:GN_q}
Let $q \in \mathcal{Q}_\text{ad}^\varepsilon$ an observation domain. We define the weighted graph $G_N(q)$ as follows:
\begin{itemize}
    \item[\textbullet] $\mathbb{I}_N$ given by~\eqref{eq:IN} is the set of vertices;
    \item[\textbullet] for every $i \in \mathbb{I}_N$, the degree of the vertex $i$ is given by:
    \begin{equation*}
        d_i^N = \Card \left( \big\{ C_{(k,-l)}^N \in \mathcal{C}_N(q); \quad i \in \{ \mathfrak{j}_N(k), \mathfrak{j}_N(l) \} \big\} \right);
    \end{equation*}
    \item[\textbullet] for every $i,j \in \mathbb{I}_N$, the weight of the edge linking the vertices $i$ and $j$ is
    \begin{equation*}
        w_{i,j}^N = w_{j,i}^N = \Card \left( \big\{ C_{(k,-l)}^N \in \mathcal{C}_N(q); \quad \{ i,j \} = \{ \mathfrak{j}_N(k), \mathfrak{j}_N(l) \} \big\} \right).
    \end{equation*}
\end{itemize}
\end{definition}

\begin{definition}\label{def:GN_q_path}
Let $q \in \mathcal{Q}_\text{ad}^\varepsilon$ and let $i,j \in \mathbb{I}_N$ be two vertices of the graph $G_N(q)$. We say that \emph{there is a path in $G_N(q)$ from $i$ to $j$} and we denote $i\ \accentset{N}{\sim}\ j$ if the vertices $i$ and $j$ are in the same connected component of $G_N(q)$. In particular, if $w_{i,j}^N \ne 0$, then $i\ \accentset{N}{\sim}\ j$.
\end{definition}

We then recall the definition of the Laplacian matrix associated to a graph.

\begin{definition}\label{def:AN_q}
Let $q \in \mathcal{Q}_\text{ad}^\varepsilon$. The \emph{Laplacian matrix associated to the graph $G_N(q)$} (see Definition~\ref{def:GN_q}) is the symmetric positive matrix $A_N(q) \in \mathcal{M}_{2N}(\mathbb{R})$ defined by
\begin{equation}\label{eq:AN_q}
    A_N(q) =
    \begin{pmatrix}
        d_{-N}^N & \cdots & -w_{-N,-1}^N & -w_{-N,1}^N & \cdots & -w_{-N,N}^N \\
          \vdots & \ddots &       \vdots &      \vdots &        &      \vdots \\
    -w_{-1,-N}^N & \cdots &     d_{-1}^N & -w_{-1,1}^N & \cdots & -w_{-1,N}^N \\
    -w_{ 1,-N}^N & \cdots & -w_{ 1,-1}^N &     d_{1}^N & \cdots & -w_{ 1,N}^N \\
          \vdots &        &       \vdots &      \vdots & \ddots &      \vdots \\
    -w_{ N,-N}^N & \cdots & -w_{ N,-1}^N & -w_{ N,1}^N & \cdots &     d_{N}^N
    \end{pmatrix}_{2N \times 2N}.
\end{equation}
\end{definition}

\begin{remark}\label{rmk:AN_q}
Remark that for every $q \in \mathcal{Q}_\text{ad}^\varepsilon$, the graph $G_N(q)$ has no loop, i.e. $w_{i,i}^N = 0$ for every $i \in \mathbb{I}_N$. Indeed, the elementary squares $C_{(k,-l)}^N$ such that $\mathfrak{j}_N(k) = \mathfrak{j}_N(l) = i$ have their centers $x_{(k,-l)}^N \in \mathbb{Z}$ and, consequently, cannot be in $Q_T$.

Remark also that the Laplacian matrix $A_N(q)$ of the graph $G_N(q)$ verifies the following property (see~\cite{BrouwerHaemers12, Chung97}) : for every $\eta = (\eta_{-N},\dots,\eta_{-1}, \eta_1,\dots,\eta_N) \in \mathbb{R}^{2N}$,
\begin{equation}\label{eq:AN_q_eta}
    \eta^T A_N(q) \eta = \sum_{i \in \mathbb{I}_N} d_i^N \eta_i^2 - \sum_{i,j \in \mathbb{I}_N} w_{i,j}^N \eta_i \eta_j = \sum_{C_{(i,j)}^N \in \mathcal{C}_N(q)} (\eta_{\mathfrak{j}_N(i)} - \eta_{-\mathfrak{j}_N(j)})^2.
\end{equation}
\end{remark}

From now on, we consider that the assumption of Lemma~\ref{lem:RN_q} holds true, \emph{i.e.} we take $N > 1/\varepsilon$. More precisely, we fix $N$ the smallest integer strictly greater than $\varepsilon^{-1}$.

\begin{lemma}\label{lem:GN_q_connect}
Let $q \in \mathcal{Q}_\text{ad}^\varepsilon$, so $q^\varepsilon$ verifies the usual \emph{geometric optics condition}. Then the associated graph $G_N(q)$ is connected.
\end{lemma}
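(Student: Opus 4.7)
The plan is to prove the connectedness of $G_N(q)$ by contradiction, by first identifying each vertex of $\mathbb{I}_N$ with a characteristic strip of $Q_T$ and then exploiting the geometric optics condition on $q^\varepsilon$. Using the odd–$2$-periodic extension of the initial data that underlies d'Alembert's formula~\eqref{eq:dAlembert}, the $2N$ vertices of $\mathbb{I}_N$ parameterize the $2N$ characteristic strips of width $1/N$ that partition $Q_T$ after folding extended characteristic lines by the laws of geometric optics. Under this identification, each elementary square $C_{(k,-l)}^N$ lies on exactly two such strips, namely the one indexed by $\mathfrak{j}_N(k)$ (in the $x+t$ direction) and the one indexed by $\mathfrak{j}_N(l)$ (in the $x-t$ direction); when $C_{(k,-l)}^N$ belongs to $\mathcal{C}_N(q)$, the corresponding edge of $G_N(q)$ records the crossing of these two strips inside $q$.

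The geometric optics condition on $q^\varepsilon$ together with Lemma~\ref{lem:RN_q} then implies that each characteristic strip $\ell_v$ meets $q^\varepsilon \subset R_N(q)$ along a nonempty open subset, and therefore traverses a nonempty contiguous block of elementary squares of $\mathcal{C}_N(q)$. Along such a block, moving from one elementary square to the next adjacent one changes only one of the indices $k$ or $l$ by $\pm 1$; via $\mathfrak{j}_N$ the perpendicular vertex then runs through consecutive entries of the natural cyclic sequence
\[
1,\,2,\,\dots,\,N,\,-N,\,-(N-1),\,\dots,\,-1
\]
on $\mathbb{I}_N$. Hence, for each vertex $v$, geometric optics guarantees the existence of a nonempty cyclic arc of neighbors of $v$ in $G_N(q)$.

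To conclude, I would assume for contradiction that $G_N(q)$ admits a nontrivial partition $\mathbb{I}_N = V \sqcup V^c$ into unions of connected components. Then no edge of $G_N(q)$ crosses this partition, so the cyclic arc of neighbors of each $v \in V$ must lie entirely in $V$, and likewise for $V^c$. Since every vertex contributes such an arc by the previous step, the arcs collectively cover the cyclic sequence. I would then argue that, with $V$ and $V^c$ both nonempty, two arcs on opposite sides of the partition are forced to overlap on at least one vertex, supplying the forbidden cross-edge and closing the contradiction.

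The main obstacle I anticipate is this last combinatorial overlap step: making rigorous the assertion that the cyclic arcs produced at the different vertices of $\mathbb{I}_N$ cannot all be coherently ``two-coloured'' without mixing colours on at least one arc. I expect this to rely on a careful analysis of the endpoints of the arcs in the cyclic order and, crucially, on the sign-flipping structure of $\mathfrak{j}_N$ (coming from the reflections on $\partial \Omega$), which relates the $x+t$- and $x-t$-indices of a same strip $\ell_v$ and thereby forces any proper bipartition of $\mathbb{I}_N$ to be incompatible with the covering produced by geometric optics on $q^\varepsilon$.
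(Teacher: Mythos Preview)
Your setup is sound—identifying vertices with characteristic strips and reading edges of $G_N(q)$ as crossings of two strips inside $R_N(q)$ is exactly the right picture—but the proof is genuinely incomplete, and the combinatorial closing step you flag as an obstacle does not seem to go through as stated. Even granting that every vertex $v$ has a nonempty cyclic arc of neighbours and that the union of these arcs covers $\mathbb{I}_N$, nothing prevents the arcs attached to $V$ from covering exactly $V$ and those attached to $V^c$ from covering exactly $V^c$; this configuration is consistent with all your hypotheses and yields no overlap. The ``sign-flipping structure of $\mathfrak{j}_N$'' you invoke is real, but you have not exhibited any mechanism by which it forces an arc to spill across a putative bipartition.

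The paper's argument avoids this difficulty by a different and much simpler device: instead of looking at the characteristic \emph{strips} indexed by the vertices, it looks at the characteristic \emph{lines} $D_i^{\pm}$ given by $x\pm t=x_i^N$, i.e.\ the common boundaries between two adjacent strips. By geometric optics on $q^\varepsilon$ and Lemma~\ref{lem:RN_q}, each such line hits $q^\varepsilon\subset R_N(q)$ at a point which then lies on the common side of two \emph{adjacent} elementary squares of $\mathcal{C}_N(q)$, say $C_{(k,l)}^N$ and $C_{(k+1,l)}^N$. These two squares share the same transverse index $l$, so the corresponding edges of $G_N(q)$ give $i\ \accentset{N}{\sim}\ -\mathfrak{j}_N(l)\ \accentset{N}{\sim}\ i+1$. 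Running over $i=1,\dots,N-1$ puts $\{1,\dots,N\}$ in one component; the same argument with $D_i^-$ handles $\{-N,\dots,-1\}$; and the line $D_N^+$ links $N$ to $-N$. The adjacency of consecutive vertices is thus obtained \emph{directly}, with no global combinatorial argument needed.
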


\begin{proof}
Let $i \in \{ 1,\dots,N-1 \}$. We denote by $D_i^+$ the support of the characteristic line "$x + t = x_i^N$", starting from $x_i^N$ in the direction of decreasing $x$ and following the rules of geometric optics for its reflexion on $\Sigma_T$. Since $q^\varepsilon$ satisfies the geometric optics condition, there exists $(x^*,t^*) \in q^\varepsilon \cap D_i^+$. From Lemma~\ref{lem:RN_q}, we have $q^\varepsilon \subset R_N(q)$, so $(x^*,t^*)$ belongs to the common side of two elementary squares in $\mathcal{C}_N(q)$:
\begin{equation*}
    \left( C_{(k,l)}^N \text{ and } C_{(k+1,l)}^N \text{ with } \mathfrak{j}_N(k) = i \right) \quad \text{or} \quad \left( C_{(l,k)}^N \text{ and } C_{(l,k-1)}^N \text{ with } \mathfrak{j}_N(k) = -i \right).
\end{equation*}
Therefore $i\ \accentset{N}{\sim}\ i+1$ and, so, the vertices $\{ 1,\dots,N \}$ are in the same connected component.

We denote by $D_i^-$ the support of the characteristic line "$x - t = x_i^N$", starting from $x_i^N$ in the direction of increasing $x$ and following the rules of geometric optics for its reflexion on $\Sigma_T$. Since $q^\varepsilon$ satisfies the geometric optics condition, there exists $(x^*,t^*) \in q^\varepsilon \cap D_i^-$. From Lemma~\ref{lem:RN_q}, we have $q^\varepsilon \subset R_N(q)$, so $(x^*,t^*)$ belongs to the common side of two elementary squares in $\mathcal{C}_N(q)$:
\begin{equation*}
    \left( C_{(l,k)}^N \text{ and } C_{(l,k+1)}^N \text{ with } \mathfrak{j}_N(k) = i \right) \quad \text{or} \quad \left( C_{(k,l)}^N \text{ and } C_{(k-1,l)}^N \text{ with } \mathfrak{j}_N(k) = -i \right).
\end{equation*}
Therefore $-i\ \accentset{N}{\sim}\ -i-1$ and, so, the vertices $\{ -N,\dots,-1 \}$ are in the same connected component.

In order to finish the proof, it remains to show that the vertices $N$ and $-N$ belong to the same connected component. We denote by $D_N^+$ the support of the characteristic line "$x + t = x_N^N$", starting from $x_N^N$ in the direction of decreasing $x$ and following the rules of geometric optics for its reflexion on $\Sigma_T$. Since $q^\varepsilon$ satisfies the geometric optics condition, there exists $(x^*,t^*) \in q^\varepsilon \cap D_N^+$. From Lemma~\ref{lem:RN_q}, we have $q^\varepsilon \subset R_N(q)$, so $(x^*,t^*)$ belongs to the common side of two elementary squares in $\mathcal{C}_N(q)$:
\begin{equation*}
    \left( C_{(k,l)}^N \text{ and } C_{(k+1,l)}^N \text{ with } \mathfrak{j}_N(k) = N \right) \quad \text{or} \quad \left( C_{(l,k)}^N \text{ and } C_{(l,k-1)}^N \text{ with } \mathfrak{j}_N(k) = -N \right).
\end{equation*}
Hence, $N\ \accentset{N}{\sim}\ -N$.
\end{proof}

\begin{remark}\label{rmk:lmbd_N}
A well known graph theory result (see, for instance,~\cite[Proposition~1.3.7]{BrouwerHaemers12}) states that the graph $G_N(q)$ is connected if and only if $\dim(\ker(A_N(q))) = 1$. Moreover, if $G_N(q)$ is connected, then $\ker(A_N(q)) = \Vect(\mathbf{1}_{2N})$, where $\mathbf{1}_{2N}$ is the vector in $\mathbb{R}^{2N}$ with all its component equal to 1.

Let us denote $\lambda_N(q) > 0$ the smallest non-zero eigenvalue of the matrix $A_N(q)$. This eigenvalue is known in graph theory as the \emph{algebraic connectivity} of the graph. We also define $\lambda_N$ by
\begin{equation}\label{eq:lmbd_N}
    \lambda_N = \min_{q \in \mathcal{Q}_\text{ad}^\varepsilon} \lambda_N(q) > 0.
\end{equation}
Note that since the set $\{ G_N(q);\ q \in \mathcal{Q}_\text{ad}^\varepsilon \}$ has a finite number of elements, $\lambda_N$ is well defined.
\end{remark}

\begin{definition}\label{def:ApN_q}
For every $p \in \mathbb{N}^*$, we denote by $\mathcal{C}_N^p(q)$ the set formed by the elementary squares associated to the subdivision $S_{pN}$ having their interior in $R_N(q)$:
\begin{equation}\label{eq:CpN_q}
    \mathcal{C}_N^p(q) = \big\{ C_{(i,j)}^{pN} \in \mathcal{C}_{pN}; \quad \accentset{\circ}{C}_{(i,j)}^{pN} \subset R_N(q) \big\}.
\end{equation}
We then define the graph $G_N^p(q)$ following Definition~\ref{def:GN_q}, substituting $N$ by $p N$, and substituting $\mathcal{C}_N(q)$ by $\mathcal{C}_N^p(q)$ in the definitions of the vertex degrees and the edge weights. Finally, we denote $A_N^p(q) \in \mathcal{M}_{2pN}(\mathbb{R})$ the Laplacian matrix associated to the graph $G_N^p(q)$. This matrix has the following block form:
\begin{equation}\label{eq:ApN_q}
    A_N^p(q) =
    \begin{pmatrix}
      d_{-N}^N p I_p & \cdots & -w_{-N,-1}^N J_p & -w_{-N,1}^N J_p & \cdots & -w_{-N,N}^N J_p \\
              \vdots & \ddots &           \vdots &          \vdots &        &          \vdots \\
    -w_{-1,-N}^N J_p & \cdots &   d_{-1}^N p I_p & -w_{-1,1}^N J_p & \cdots & -w_{-1,N}^N J_p \\
    -w_{ 1,-N}^N J_p & \cdots & -w_{ 1,-1}^N J_p &   d_{1}^N p I_p & \cdots & -w_{ 1,N}^N J_p \\
              \vdots &        &           \vdots &          \vdots & \ddots &          \vdots \\
    -w_{ N,-N}^N J_p & \cdots & -w_{ N,-1}^N J_p & -w_{ N,1}^N J_p & \cdots &   d_{N}^N p I_p
    \end{pmatrix}_{2pN \times 2pN},
\end{equation}
where $I_p, J_p \in \mathcal{M}_p(\mathbb{R})$ are respectively the identity matrix and the matrix with all its elements equal to $1$.

Moreover, for every $\eta = (\eta_{-pN},\dots,\eta_{-1}, \eta_1,\dots,\eta_{pN}) \in \mathbb{R}^{2pN}$, we have
\begin{equation}\label{eq:ApN_q_eta}
    \eta^T A_N^p(q) \eta = \sum_{C_{(i,j)}^N \in \mathcal{C}_N(q)} \sum_{i' \in \mathbb{J}_i^p} \sum_{j' \in \mathbb{J}_j^p} (\eta_{\mathfrak{j}_{pN}(i')} - \eta_{-\mathfrak{j}_{pN}(j')})^2,
\end{equation}
with $\mathbb{J}_i^p$ defined in~\eqref{eq:Ji}.
\end{definition}

For any $p \in \mathbb{N}^*$, the following lemma makes the link between the spectrum of the Laplacian matrix $A_N^p(q)$ (see Definition~\ref{def:ApN_q}) and the spectrum of the Laplacian matrix $A_N(q)$ (see Definition~\ref{def:AN_q}).

\begin{lemma}\label{lem:Lap_spec}
Let $p \in \mathbb{N}^*$. The spectrum of the Laplacian matrix $\frac{1}{p} A_N^p(q)$ (see~\eqref{eq:ApN_q}) is composed of the spectrum of the Laplacian matrix $A_N(q)$ (see~\eqref{eq:AN_q}), and the diagonal elements of $A_N(q)$ repeated $p - 1$ times. Moreover, $\dim(\ker(A_N^p(q))) = 1$ and $\ker(A_N^p(q)) = \Vect(\mathbf{1}_{2pN})$.
\end{lemma}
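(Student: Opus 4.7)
The plan is to exploit the tensor/Kronecker structure present in the block expression~\eqref{eq:ApN_q}. Each diagonal block is a scalar multiple of $I_p$ and each off-diagonal block is a scalar multiple of $J_p = \mathbf{1}_p \mathbf{1}_p^T$, so setting
\begin{equation*}
    D_N = \Diag(d_{-N}^N, \dots, d_{-1}^N, d_1^N, \dots, d_N^N), \quad W_N = (w_{i,j}^N)_{i,j \in \mathbb{I}_N},
\end{equation*}
one can rewrite
\begin{equation*}
    A_N^p(q) = p\, D_N \otimes I_p - W_N \otimes J_p, \qquad A_N(q) = D_N - W_N.
\end{equation*}
Introducing the orthogonal projector $\Pi_\parallel = \frac{1}{p} J_p$ onto $\Vect(\mathbf{1}_p)$ and its complement $\Pi_\perp = I_p - \Pi_\parallel$, an immediate rearrangement yields the key identity
\begin{equation*}
    \frac{1}{p}\, A_N^p(q) = A_N(q) \otimes \Pi_\parallel + D_N \otimes \Pi_\perp.
\end{equation*}

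The next step is to read off the spectrum from the orthogonal decomposition $\mathbb{R}^{2pN} = V_\parallel \oplus V_\perp$, where $V_\parallel = \mathbb{R}^{2N} \otimes \Vect(\mathbf{1}_p)$ and $V_\perp = \mathbb{R}^{2N} \otimes \mathbf{1}_p^\perp$. Both subspaces are stable under $\frac{1}{p}A_N^p(q)$ because $\Pi_\parallel$ and $\Pi_\perp$ are complementary orthogonal projections. Identifying $V_\parallel$ with $\mathbb{R}^{2N}$ via $\xi \mapsto \xi \otimes \mathbf{1}_p$, the restriction of $\frac{1}{p}A_N^p(q)$ to $V_\parallel$ coincides with $A_N(q)$, and so contributes exactly the $2N$ eigenvalues of $A_N(q)$. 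On $V_\perp$, of dimension $2N(p-1)$, the term $W_N \otimes \Pi_\parallel$ vanishes and the restriction acts as $D_N$ on the $\mathbb{R}^{2N}$ factor, so each diagonal entry $d_i^N$ of $A_N(q)$ is an eigenvalue with multiplicity $p-1$. Summing multiplicities gives $2N + 2N(p-1) = 2pN$, which exhausts the spectrum and proves the first claim.

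For the kernel, Lemma~\ref{lem:GN_q_connect} combined with Remark~\ref{rmk:lmbd_N} gives $\ker A_N(q) = \Vect(\mathbf{1}_{2N})$; its image in $V_\parallel$ is spanned by $\mathbf{1}_{2N} \otimes \mathbf{1}_p = \mathbf{1}_{2pN}$. On $V_\perp$, the candidate eigenvalues are the $d_i^N$, and these are all strictly positive since connectivity of $G_N(q)$ on $2N \geq 2$ vertices forbids any isolated vertex. Therefore $\ker A_N^p(q) = \Vect(\mathbf{1}_{2pN})$, which is one-dimensional. The only non-routine step is the initial Kronecker-product reformulation; once that is written down, the whole argument collapses to the elementary observation that $I_p$ and $J_p$ are simultaneously diagonalized by $\Vect(\mathbf{1}_p)$ and its orthogonal complement, which is what decouples the spectrum of $A_N^p(q)$ into the two advertised pieces.
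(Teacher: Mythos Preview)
Your proof is correct. At its core it is the same argument as the paper's: both exploit that $I_p$ and $J_p$ are simultaneously diagonalized by the decomposition $\mathbb{R}^p = \Vect(\mathbf{1}_p) \oplus \mathbf{1}_p^\perp$, which splits $\frac{1}{p}A_N^p(q)$ into a copy of $A_N(q)$ and $p-1$ copies of $\Diag(A_N(q))$.

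The packaging differs, though, and your version is arguably cleaner. The paper works with the quadratic form $\eta^T A_N^p(q)\eta$, explicitly writes $J_p = Q D Q^T$ with $D = \Diag(p,0,\dots,0)$, performs the change of variables $U = Q^T\Gamma$, and reads off the block-diagonal structure from the resulting expression $U_{1,\cdot}^T A_N(q) U_{1,\cdot} + \sum_{k\geq 2} U_{k,\cdot}^T \Diag(A_N(q)) U_{k,\cdot}$. You instead recognize the Kronecker structure $A_N^p(q) = p\,D_N\otimes I_p - W_N\otimes J_p$ and the one-line identity $\frac{1}{p}A_N^p(q) = A_N(q)\otimes\Pi_\parallel + D_N\otimes\Pi_\perp$, after which the invariant-subspace decomposition $V_\parallel\oplus V_\perp$ does all the work. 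Your route avoids any explicit coordinate change or quadratic-form computation; the trade-off is that it relies on tensor-product bookkeeping that the paper keeps implicit. The paper does not spell out the kernel argument (positivity of the degrees $d_i^N$ from connectivity), so your treatment of that point is slightly more complete.
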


\begin{proof}
Let $\eta = (\eta_{-pN},\dots,\eta_{-1}, \eta_1,\dots,\eta_{pN}) \in \mathbb{R}^{2pN}$. For every $i \in \mathbb{I}_N$, we denote by $\Gamma_i = (\eta_{i'})_{i' \in \mathbb{J}_i^p} \in \mathbb{R}^p$ and we group these vectors in the matrix
\begin{equation*}
    \Gamma = (\Gamma_{-N} | \cdots | \Gamma_{-1} | \Gamma_1 | \cdots | \Gamma_N) \in \mathcal{M}_{p,2N}(\mathbb{R}).
\end{equation*}
In view of~\eqref{eq:ApN_q}, it follows that
\begin{equation*}
    \eta^T A_N^p(q) \eta = p \sum_{i \in \mathbb{I}_N} d_i^N \Gamma_i^T \Gamma_i - \sum_{i,j \in \mathbb{I}_N} w_{i,j}^N \Gamma_i^T J_p \Gamma_j.
\end{equation*}
Since $J_p$ is a real symmetric matrix, there exists an orthonormal basis $(b_k)_{1 \leq k \leq p}$ of $\mathbb{R}^p$ diagonalizing $J_p$. Let us denote $b_1 = \frac{1}{\sqrt{p}} \mathbf{1}_p$. Then, there exists a diagonal matrix $D \in \mathcal{M}_p(\mathbb{R})$ and a unitary matrix $Q \in \mathcal{M}_p(\mathbb{R})$ such that $J_p = Q D Q^T$. These matrices have the following form:
\begin{equation*}
    D =
    \begin{pmatrix}
         p &      0 & \cdots &      0 \\
         0 &      0 & \cdots &      0 \\
    \vdots & \vdots & \ddots & \vdots \\
         0 &      0 & \cdots &      0
    \end{pmatrix}_{p \times p}
    \quad \textrm{and} \quad Q = (b_1 | \cdots | b_p)_{p \times p}.
\end{equation*}
We also define the matrix $U = Q^T \Gamma \in \mathcal{M}_{p,2N}(\mathbb{R})$, and denote respectively
\begin{equation*}
    U_{k,.} = (b_k^T \Gamma_i)_{i \in \mathbb{I}_N} \in \mathbb{R}^{2N} \quad \text{and} \quad U_{.,i} = (b_k^T \Gamma_i)_{1 \leq k \leq p} \in \mathbb{R}^p
\end{equation*}
the rows and the columns of $U$. Then, for $i,j \in \mathbb{I}_N$, we have
\begin{equation*}
    \Gamma_i^T \Gamma_i = U_{.,i}^T U_{.,i} = \sum_{k=1}^p U_{k,i}^2 \quad \text{and} \quad \Gamma_i^T J_p \Gamma_j = U_{.,i}^T D U_{.,j} = p U_{1,i} U_{1,j}.
\end{equation*}
The spectrum of the matrix $\frac{1}{p} A_N^p(q)$ can now be computed from
\begin{align*}
    \frac{1}{p} \eta^T A_N^p(q) \eta & = \sum_{i \in \mathbb{I}_N} d_i^N U_{1,i}^2 - \sum_{i,j \in \mathbb{I}_N} w_{i,j}^N U_{1,i} U_{1,j} + \sum_{k=2}^p \sum_{i \in \mathbb{I}_N} d_i^N U_{k,i}^2 \\
    & = U_{1,.}^T A_N(q) U_{1,.} + \sum_{k=2}^p U_{k,.}^T \Diag(A_N(q)) U_{k,.}.
\end{align*}
Indeed, the expression above shows that $A_N^p(q)$ is unitarily similar to the block diagonal matrix
\begin{equation*}
    \begin{pmatrix}
    A_N(q) &               &        & \\
           & \Diag(A_N(q)) &        & \\
           &               & \ddots & \\
           &               &        & \Diag(A_N(q))
    \end{pmatrix}_{2pN \times 2pN}.
\end{equation*}
\end{proof}

\subsection{Proof of Theorem~\ref{thm:unif_obs}}
\label{ssect:unif_obs_pf}

We are now in position, using all these notations and results, to prove the Theorem~\ref{thm:unif_obs}.

\begin{proof}[Proof of Theorem~\ref{thm:unif_obs}]
We prove the theorem in three steps.

\noindent \emph{Step 1.} Let $N$ be the smallest integer strictly greater than $\varepsilon^{-1}$. The first step is to prove an observability inequality for the function $\varphi^N$ given by~\eqref{eq:dAlembert} and associated to the initial datum~\eqref{eq:phi0_N}-\eqref{eq:phi1_N}.

In view of~\eqref{eq:phit_N_1} and~\eqref{eq:phit_N_2}, remark that the function $(\varphi^N_t)^2$ is constant on each elementary square $C_{(i,j)}^N$ in $\mathcal{C}_N$:
\begin{equation}\label{eq:phit_N_cst}
    (\varphi^N_t)^2 |_{C_{(i,j)}^N} = \frac{1}{4} (\gamma_{\mathfrak{j}_N(i)}^N - \gamma_{-\mathfrak{j}_N(j)}^N)^2,
\end{equation}
where $\gamma_{\mathfrak{j}_N(i)}^N$ is given by~\eqref{eq:gmm_N} and $\mathfrak{j}_N$ by~\eqref{eq:j}. Using the definition~\eqref{eq:RN_q} of the set $R_{N}(q)$, we can minorate the $L^2$-norm of $\varphi^N_t$ restricted to $q$ as follows:
\begin{align}
\nonumber
    \iint_q (\varphi^N_t)^2 \geq \iint_{R_N(q)} (\varphi^N_t)^2 & = \sum_{C_{(i,j)}^N \in \mathcal{C}_N(q)} \iint_{C_{(i,j)}^N} (\varphi^N_t)^2 \\
\label{eq:obs_N_1}
    & = \frac{1}{8 N^2} \sum_{C_{(i,j)}^N \in \mathcal{C}_N(q)} (\gamma_{\mathfrak{j}_N(i)}^N - \gamma_{-\mathfrak{j}_N(j)}^N)^2.
\end{align}
For the last equality, we used that the area of every elementary square in $\mathcal{C}_N$ is $\frac{1}{2 N^2}$. Combining~\eqref{eq:obs_N_1} with the relation~\eqref{eq:AN_q_eta} in Remark~\ref{rmk:AN_q}, we obtain
\begin{equation}\label{eq:obs_N_2}
    \iint_q (\varphi^N_t)^2 \geq \frac{1}{8 N^2} (\gamma^N)^T A_N(q) \gamma^N,
\end{equation}
with $\gamma^N = (\gamma_{-N}^N, \dots, \gamma_{-1}^N, \gamma_1^N, \dots, \gamma_N^N) \in \mathbb{R}^{2N}$ and $\gamma_i^N$ given by~\eqref{eq:gmm_N}.

It is easy to see that $\gamma^N \in \ker(A_N(q))^\perp$. Indeed, applying Lemma~\ref{lem:GN_q_connect}, the graph $G_N(q)$ is connected. Then, from Remark~\ref{rmk:lmbd_N}, we have $\ker(A_N(q)) = \Vect(\mathbf{1}_{2N})$ and, since $\alpha_i^N = \alpha_{-i}^N$ and $\beta_i^N = -\beta_{-i}^N$, the vector $\gamma^N$ verifies
\begin{equation*}
    (\gamma^N)^T \mathbf{1}_{2N} = \sum_{i \in \mathbb{I}_N} \gamma_i^N = 2 \sum_{i=1}^N \alpha_i^N = 0.
\end{equation*}
Then, using $\lambda_N$ defined in~\eqref{eq:lmbd_N}, it follows that
\begin{align}
\nonumber
    (\gamma^N)^T A_N(q) \gamma^N \geq \lambda_N \sum_{i \in \mathbb{I}_N} (\gamma_i^N)^2 & = 2 \lambda_N \sum_{i=1}^N \Big( (\alpha_i^N)^2 + (\beta_i^N)^2 \Big) \\
\label{eq:AN_q_phi_01}
    & = 2 N \lambda_N \left( \| \varphi_0^N \|_{H^1_0(\Omega)}^2 + \| \varphi_1^N \|_{L^2(\Omega)}^2 \right).
\end{align}
From~\eqref{eq:obs_N_2} and~\eqref{eq:AN_q_phi_01}, we deduce the following observability inequality :
\begin{equation}\label{eq:obs_N}
    \| (\varphi_0^N, \varphi_1^N) \|_\mathbf{V}^2 \leq \frac{4 N}{\lambda_N} \| \varphi^N_t \|_{L^2(q)}^2, 
\end{equation}
where the constant $\frac{4 N}{\lambda_N}$ is independent of the domain $q$ and the initial datum $(\varphi_0, \varphi_1)$.

\emph{Step 2.} For any $p \in \mathbb{N}^*$, the second step of the proof consists in obtaining a uniform observability inequality for an initial datum $(\varphi_0^{pN}, \varphi_1^{pN})$ of the form~\eqref{eq:phi0_N}-\eqref{eq:phi1_N}. More precisely, we aim to obtain a uniform inequality with respect to the domain $q \in \mathcal{Q}_\text{ad}^\varepsilon$ and the integer $p \in \mathbb{N}^*$.

Let $p \in \mathbb{N}^*$. As in the first step of the proof, we easily see that $(\varphi^{pN}_t)^2$ is constant on every elementary square $C_{(i',j')}^{pN} \in \mathcal{C}_{pN}$:
\begin{equation*}
    (\varphi^{pN}_t)^2 |_{C_{(i',j')}^{pN}} = \frac{1}{4} (\gamma_{\mathfrak{j}_{pN}(i')}^{pN} - \gamma_{-\mathfrak{j}_{pN}(j')}^{pN})^2,
\end{equation*}
where $\varphi^{pN}$ is the solution of~\eqref{eq:adj_wave} associated to the initial datum $(\varphi_0^{pN}, \varphi_1^{pN})$ given by~\eqref{eq:phi0_N}-\eqref{eq:phi1_N}, and $\gamma_{\mathfrak{j}_{pN}(i')}^{pN}$ is defined by~\eqref{eq:gmm_N}. For any $i \in \mathbb{Z}^*$, we define the set $\mathbb{J}_i^p$ by
\begin{equation}\label{eq:Ji}
    \mathbb{J}_i^p =
    \left\{
    \begin{array}{ll}
        \{ p(i-1)+1, \dots, pi \} & \text{if } i > 0, \\[2mm]
        \{ pi, \dots, p(i+1)-1 \} & \text{if } i < 0.
    \end{array}
    \right.
\end{equation}
Then, remark that every elementary square $C_{(i,j)}^N \in \mathcal{C}_N$ is the union of $p^2$ elementary squares in $\mathcal{C}_{pN}$, or more precisely that
\begin{equation*}
    C_{(i,j)}^N = \bigcup_{i' \in \mathbb{J}_i^p} \bigcup_{j' \in \mathbb{J}_j^p} C_{(i',j')}^{pN}, \quad \forall i,j \in \mathbb{Z}^*.
\end{equation*}
Using the above expression in the evaluation of the $L^2(q)$-norm of $\varphi^{pN}$, we have
\begin{align}
\nonumber
    \iint_q (\varphi^{pN}_t)^2 & \geq \iint_{R_N(q)} (\varphi^{pN}_t)^2 = \sum_{C_{(i,j)}^N \in \mathcal{C}_N(q)} \iint_{C_{(i,j)}^N} (\varphi^{pN}_t)^2 \\
\nonumber
    & = \sum_{C_{(i,j)}^N \in \mathcal{C}_N(q)} \sum_{i' \in \mathbb{J}_i^p} \sum_{j' \in \mathbb{J}_j^p} \iint_{C_{(i',j')}^{pN}} (\varphi^{p N}_t)^2 \\
\nonumber
    & = \frac{1}{8 p^2 N^2} \sum_{C_{(i,j)}^N \in \mathcal{C}_N(q)} \sum_{i' \in \mathbb{J}_i^p} \sum_{j' \in \mathbb{J}_j^p} (\gamma_{\mathfrak{j}_{pN}(i')}^{pN} - \gamma_{-\mathfrak{j}_{pN}(j')}^{pN})^2 \\
\label{eq:obs_pN_1}
    & = \frac{1}{8 p^2 N^2} (\gamma^{pN})^T A_N^p(q) \gamma^{pN}.
\end{align}

Since the graph $G_N(q)$ is a connected graph, the degree $d_i^N$ of every vertex $i \in \mathbb{I}_N$ verifies $d_i^N \geq 1$. Applying Lemma~\ref{lem:Lap_spec}, the smallest non-zero eigenvalue $\lambda_N^p(q)$ of $\frac{1}{p} A_N^p(q)$ verifies
\begin{equation}\label{eq:lmbd_pN}
    \lambda_N^p(q) = \min(\lambda_N(q), \min_{i \in \mathbb{I}_N} d_i^N) \geq \min(\lambda_N,1) > 0,
\end{equation}
so we set $\widehat{\lambda}_N = \min(\lambda_N,1)$. The vector $\gamma^{pN} = (\gamma_{i'}^{pN})_{i' \in \mathbb{I}_{pN}}$ belongs to $\ker(A_N^p(q))^\perp$. Indeed,
\begin{equation*}
    (\gamma^{pN})^T \mathbf{1}_{2pN} = \sum_{i' \in \mathbb{I}_{pN}} \gamma_{i'}^{pN} = 2 \sum_{i'=1}^{pN} \alpha_{i'}^{pN} = 0.
\end{equation*}
It follows that
\begin{align*}
    \frac{1}{p} (\gamma^{pN})^T A_N^p(q) \gamma^{pN} \geq \widehat{\lambda}_N \sum_{i' \in \mathbb{I}_{pN}} (\gamma_{i'}^{pN})^2 & = 2 \widehat{\lambda}_N \sum_{i'=1}^{pN} \Big( (\alpha_{i'}^{pN})^2 + (\beta_{i'}^{pN})^2 \Big) \\
    & = 2 p N \widehat{\lambda}_N \left( \| \varphi_0^{pN} \|_{H^1_0(\Omega)}^2 + \| \varphi_1^{pN} \|_{L^2(\Omega)}^2 \right).
\end{align*}
Consequently, combining the above relation with~\eqref{eq:obs_pN_1}, we obtain the following observability inequality
\begin{equation}\label{eq:obs_pN}
    \| (\varphi_0^{pN}, \varphi_1^{pN}) \|_\mathbf{V}^2 \leq \frac{4 N}{\widehat{\lambda}_N} \| \varphi^{pN}_t \|_{L^2(q)}^2, \quad \forall p\in \mathbb{N}
\end{equation}
with the observability constant $\frac{4 N}{\widehat{\lambda}_N}$ independent of the domain $q$, the initial datum $(\varphi_0, \varphi_1)$ and the integer $p$.

\emph{Step 3.} In order to finish the proof, we pass to the limit when $p \to \infty$ in the observability inequality~\eqref{eq:obs_pN}. It is easy to see that when $p \to \infty$, we have the convergences
\begin{equation*}
    \varphi_0^{pN} \to \varphi_0 \quad \text{in } H^1_0(\Omega) \quad \text{and} \quad \varphi_1^{pN} \to \varphi_1 \quad \text{in } L^2(\Omega).
\end{equation*}
Moreover, since the solution $\varphi$ of the wave equation~\eqref{eq:adj_wave} depends continuously on its initial condition $(\varphi_0, \varphi_1)\in \boldsymbol{V}$ , we can write
\begin{equation*}
    \varphi^{pN}_t \to \varphi_t \quad \text{in } L^2(0,T; L^2(\Omega)).
\end{equation*}
Finally, passing to the limit in~\eqref{eq:obs_pN}, we get
\begin{equation*}
    \| (\varphi_0, \varphi_1) \|_\mathbf{V}^2 \leq \max \left\{ 4N, \frac{4 N}{\lambda_N} \right\} \| \varphi_t \|_{L^2(q)}^2,\quad \forall (\varphi_0, \varphi_1)\in \mathbf{V}
\end{equation*}
which concludes the proof with $C_\text{obs}^\varepsilon = \max\left\{ 4N, \frac{4 N}{\lambda_N} \right\}$. We recall that $N$ depends on $\varepsilon$ by the condition $N > 1/\varepsilon$.
\end{proof}

\begin{remark}\label{rmk:Cobs_bnd}
Let $q \subset Q_T$ be a finite union of open sets. If $q$ verifies the usual geometric optics condition, there exists $\varepsilon > 0$ small enough such that $q^\varepsilon$ still verifies the geometric optics condition. We then set $N = \lfloor 1/\varepsilon \rfloor + 1$. The associated graph $G_N(q)$ being connected, there exists a relation (see, for instance,~\cite{Mohar91}) between the algebraic connectivity $\lambda_N(q)$, the number of vertices $N_V$ and the diameter $D_G$ of the graph. More exactly, this relation is $\lambda_N(q) \geq \frac{4}{N_V D_G}$. Since in our case $N_V = 2 N$ and $D_G \leq 2 N$, we deduce that $\lambda_N(q) \geq \frac{1}{N^2}$ and therefore that $C_\text{obs}(q) \leq 4 N^3$. In the worst situation, we can have an observability constant of order $1/\varepsilon^3$. Therefore, if we consider $\varepsilon$ as a measure of the "thickness" of the observation domain $q$, we find the estimation of the observability constant given in~\cite[Proposition~2.1]{Periago09}.
\end{remark}

\subsection{One explicit example}
\label{ssect:unif_obs_ex}

We illustrate in this section the proof of Theorem~\ref{thm:unif_obs} on a simple example for which the observation domain $q$ depicted in Figure~\ref{fig:RN_q_ex} (colored in red) is well adapted to the subdivision $S_4$. The study of this example is also the opportunity to develop a method for the computation of the observability constant for observation domains which are exactly the union of elementary squares associated to a given subdivision $S_N$, for a fixed integer $N > 0$.

We start by enumerating the elementary squares composing the observation domain $q$. In Table~\ref{tab:RN_q_ex}, we list, for $i,j \in \mathbbm{Z}^*$, the elementary squares $C_{(i,j)}^4$ included in $q$ and the values of the indices $\mathfrak{j}_4(i)$ and $-\mathfrak{j}_4(j)$, allowing to compute the Laplacian matrix $A_4(q)$ associated to the corresponding graph $G_4(q)$.

\begin{table}[ht]
\centering
\begin{tabular}[t]{|lcc|lcc|lcc|}
\hline
$C_{(i,j)}^4$ & $\mathfrak{j}_4(i)$ & $-\mathfrak{j}_4(j)$ & $C_{(i,j)}^4$ & $\mathfrak{j}_4(i)$ & $-\mathfrak{j}_4(j)$ & $C_{(i,j)}^4$ & $\mathfrak{j}_4(i)$ & $-\mathfrak{j}_4(j)$ \\
\hline
$C_{(2, 1)}^4$ & $ 2$ & $-1$ & $C_{(6,-1)}^4$ & $-3$ & $ 1$ & $C_{(9,-4)}^4$ & $ 1$ & $ 4$ \\
$C_{(2,-1)}^4$ & $ 2$ & $ 1$ & $C_{(7, 1)}^4$ & $-2$ & $-1$ & $C_{(8,-5)}^4$ & $-1$ & $-4$ \\
$C_{(3, 1)}^4$ & $ 3$ & $-1$ & $C_{(7,-1)}^4$ & $-2$ & $ 1$ & $C_{(9,-5)}^4$ & $ 1$ & $-4$ \\
$C_{(3,-1)}^4$ & $ 3$ & $ 1$ & $C_{(8,-1)}^4$ & $-1$ & $ 1$ & $C_{(8,-6)}^4$ & $-1$ & $-3$ \\
$C_{(4, 1)}^4$ & $ 4$ & $-1$ & $C_{(8,-2)}^4$ & $-1$ & $ 2$ & $C_{(9,-6)}^4$ & $ 1$ & $-3$ \\
$C_{(4,-1)}^4$ & $ 4$ & $ 1$ & $C_{(9,-2)}^4$ & $ 1$ & $ 2$ & $C_{(8,-7)}^4$ & $-1$ & $-2$ \\
$C_{(5, 1)}^4$ & $-4$ & $-1$ & $C_{(8,-3)}^4$ & $-1$ & $ 3$ & $C_{(9,-7)}^4$ & $ 1$ & $-2$ \\
$C_{(5,-1)}^4$ & $-4$ & $ 1$ & $C_{(9,-3)}^4$ & $ 1$ & $ 3$ &                &    & \\
$C_{(6, 1)}^4$ & $-3$ & $-1$ & $C_{(8,-4)}^4$ & $-1$ & $ 4$ &                &    & \\
\hline
\end{tabular}
\caption{Elementary squares associated to $S_4$ and belonging to $\mathcal{C}_4(q)$.}
\label{tab:RN_q_ex}
\end{table}

\noindent
\begin{minipage}{0.6\textwidth}
\hspace{15pt} The Laplacian matrix associated to the graph $G_4(q)$ is given by
\begin{equation*}
    A_4(q) =
    \begin{pmatrix}
     4 &  0 &  0 & -2 & -2 &  0 &  0 &  0 \\
     0 &  4 &  0 & -2 & -2 &  0 &  0 &  0 \\
     0 &  0 &  4 & -2 & -2 &  0 &  0 &  0 \\
    -2 & -2 & -2 & 13 & -1 & -2 & -2 & -2 \\
    -2 & -2 & -2 & -1 & 13 & -2 & -2 & -2 \\
     0 &  0 &  0 & -2 & -2 &  4 &  0 &  0 \\
     0 &  0 &  0 & -2 & -2 &  0 &  4 &  0 \\
     0 &  0 &  0 & -2 & -2 &  0 &  0 &  4 \\
    \end{pmatrix}_{8 \times 8}.
\end{equation*}
The spectrum of $A_4(q)$ can be explicitly computed:
\begin{equation*}
    \Sp(A_4(q)) = \{ 0, 4, 4, 4, 4, 4, 14, 16 \}.
\end{equation*}
\end{minipage}
\hfill
\begin{minipage}{0.35\textwidth}
\begin{figure}[H]
\centering
\begin{tikzpicture}[scale=2.5]
\tikzmath{\T = 2;}

\coordinate (i) at (1,0);
\coordinate (j) at (0,1);
\coordinate (ij) at (1,1);



\draw[blue,line width=1] (0,0) -- (1,1);
\draw[blue,line width=1] (0.25,0) -- (1,0.75);
\draw[blue,line width=1] (0.5,0) -- (1,0.5);
\draw[blue,line width=1] (0.75,0) -- (1,0.25);

\draw[blue,line width=1] (0.25,0) -- (0,0.25);
\draw[blue,line width=1] (0.5,0) -- (0,0.5);
\draw[blue,line width=1] (0.75,0) -- (0,0.75);
\draw[blue,line width=1] (1,0) -- (0,1);

\draw[blue,line width=1] (0,0.25) -- (1,1.25);
\draw[blue,line width=1] (0,0.5) -- (1,1.5);
\draw[blue,line width=1] (0,0.75) -- (1,1.75);
\draw[blue,line width=1] (0,1) -- (1,2);

\draw[blue,line width=1] (1,0.25) -- (0,1.25);
\draw[blue,line width=1] (1,0.5) -- (0,1.5);
\draw[blue,line width=1] (1,0.75) -- (0,1.75);
\draw[blue,line width=1] (1,1) -- (0,2);

\draw[blue,line width=1] (0,1.25) -- (0.75,2);
\draw[blue,line width=1] (0,1.5) -- (0.5,2);
\draw[blue,line width=1] (0,1.75) -- (0.25,2);

\draw[blue,line width=1] (1,1.25) -- (0.25,2);
\draw[blue,line width=1] (1,1.5) -- (0.5,2);
\draw[blue,line width=1] (1,1.75) -- (0.75,2);


\path[fill=red!25] (0.25,0) -- (1,0.75) -- (0.75,1) -- (0,0.25) -- (0.25,0);
\path[fill=red!25] (0.875,0.875) -- (1,1) -- (0.875,1.125) -- (0.75,1) -- (0.875,0.875);
\path[fill=red!25] (0.75,1) -- (1,1.25) -- (0.25,2) -- (0,1.75) -- (0.75,1);


\draw[red,line width=2] (0,0.25) -- (1,1.25);
\draw[red,line width=2] (0.125,0.125) -- (1,1);
\draw[red,line width=2] (0.25,0) -- (1,0.75);
\draw[red,line width=2] (1,0.75) -- (0,1.75);
\draw[red,line width=2] (1,1) -- (0.125,1.875);
\draw[red,line width=2] (1,1.25) -- (0.25,2);

\draw[red,line width=2] (0.25,0) -- (0,0.25);
\draw[red,line width=2] (0.375,0.125) -- (0.125,0.375);
\draw[red,line width=2] (0.5,0.25) -- (0.25,0.5);
\draw[red,line width=2] (0.625,0.375) -- (0.375,0.625);
\draw[red,line width=2] (0.75,0.5) -- (0.5,0.75);
\draw[red,line width=2] (0.875,0.625) -- (0.625,0.875);

\draw[red,line width=2] (0,1.75) -- (0.25,2);
\draw[red,line width=2] (0.125,1.625) -- (0.375,1.875);
\draw[red,line width=2] (0.25,1.5) -- (0.5,1.75);
\draw[red,line width=2] (0.375,1.375) -- (0.625,1.625);
\draw[red,line width=2] (0.5,1.25) -- (0.75,1.5);
\draw[red,line width=2] (0.625,1.125) -- (0.875,1.375);


\draw[line width=2] (0,0) rectangle (1,\T);
\path (0,0) -- (1,0) node[midway,below]{$x$};
\path (0,0) -- (0,\T) node[midway,left]{$t$};
\draw[line width=2] (0,0) -- ++($-0.025*(j)$) node[below]{$0$};
\draw[line width=2] (1,0) -- ++($-0.025*(j)$) node[below]{$1$};
\draw[line width=2] (0,0) -- ++($-0.025*(i)$) node[left]{$0$};
\draw[line width=2] (0,\T) -- ++($-0.025*(i)$) node[left]{$T = 2$};


\draw[->,>=latex,line width=1] (1.1,0.375) node[right]{$q$} -- (0.5,0.375);
\end{tikzpicture}
\caption{Observation domain $q$ adapted to $S_4$.}
\label{fig:RN_q_ex}
\end{figure}
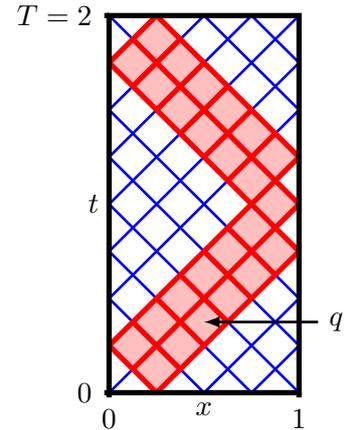
\end{minipage} \\

It confirms that the kernel of $A_4(q)$ is one-dimensional -- therefore $G_4(q)$ is connected -- and implies that the smallest non-zero eigenvalue of $A_4(q)$ is $\lambda_4(q) = 4$. If we replace the subdivision $S_4$ by the subdivision $S_{4p}$ for any $p \in \mathbb{N}^*$, then the Laplacian matrix associated to the graph $G_4^p(q)$ is the following one:
\begin{equation*}
    A_4^p(q) =
    \begin{pmatrix}
    4 p I_p &     0_p &     0_p &   -2 J_p &   -2 J_p &     0_p &     0_p &     0_p \\
        0_p & 4 p I_p &     0_p &   -2 J_p &   -2 J_p &     0_p &     0_p &     0_p \\
        0_p &     0_p & 4 p I_p &   -2 J_p &   -2 J_p &     0_p &     0_p &     0_p \\
     -2 J_p &  -2 J_p &  -2 J_p & 13 p I_p &     -J_p &  -2 J_p &  -2 J_p &  -2 J_p \\
     -2 J_p &  -2 J_p &  -2 J_p &     -J_p & 13 p I_p &  -2 J_p &  -2 J_p &  -2 J_p \\
        0_p &     0_p &     0_p &   -2 J_p &   -2 J_p & 4 p I_p &     0_p &     0_p \\
        0_p &     0_p &     0_p &   -2 J_p &   -2 J_p &     0_p & 4 p I_p &     0_p \\
        0_p &     0_p &     0_p &   -2 J_p &   -2 J_p &     0_p &     0_p & 4 p I_p \\
    \end{pmatrix}_{8p \times 8p}.
\end{equation*}
According to Lemma~\ref{lem:Lap_spec}, the smallest non-zero eigenvalue of $\frac{1}{p} A_4^p(q)$ is given by
\begin{equation*}
    \lambda_4^p(q) = \min(\lambda_4(q), \min_{i \in \mathbb{I}_4} d_i^4) = \min(4,4) = 4.
\end{equation*}
Consequently, the observability constant associated to the observation domain $q$ depicted in Figure~\ref{fig:RN_q_ex} is given by
\begin{equation*}
    C_\text{obs}(q) = \frac{4 \cdot 4}{\lambda_4^p(q)} = 4.
\end{equation*}


\subsection{A corollary}
\label{ssect:unif_obs_cor}

We show in this section a uniform observability inequality for the observation domains $q_\gamma$ defined in~\eqref{eq:q_gmm}, with $\gamma \in \mathcal{G}_\text{ad}$, which will be used in the next section. 

\begin{corollary}\label{cor:unif_obs_W_gmm}
Let $T \geq 2$. There exists a constant $C_\text{obs} > 0$ such that for every $\gamma \in \mathcal{G}_\text{ad}$,
\begin{equation}\label{eq:unif_obs_W_gmm}
    \| (\varphi_0, \varphi_1) \|_\mathbf{W}^2 \leq C_\text{obs} \| \varphi \|_{L^2(q_\gamma)}^2, \quad \forall (\varphi_0, \varphi_1) \in \mathbf{W},
\end{equation}
where $\varphi$ is the solution of the homogeneous wave equation~\eqref{eq:adj_wave} associated to the initial condition $(\varphi_0, \varphi_1)$.
\end{corollary}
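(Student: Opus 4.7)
The plan is to reduce Corollary~\ref{cor:unif_obs_W_gmm} to Theorem~\ref{thm:unif_obs} via two independent reductions: a geometric one placing the tubes $q_\gamma$ inside the admissible class $\mathcal{Q}_\text{ad}^\varepsilon$ for an $\varepsilon$ depending only on $\delta_0$ and $M$, and a classical primitive-in-time trick that upgrades the $\mathbf{V}$-inequality of Theorem~\ref{thm:unif_obs} (in terms of $\varphi_t$) to the desired $\mathbf{W}$-inequality (in terms of $\varphi$).

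First I would verify the inclusion claimed just after~\eqref{eq:Gad}: for $T \geq 2$ and a sufficiently small $\varepsilon_0 = \varepsilon_0(\delta_0, M) > 0$, every tube $q_\gamma$ with $\gamma \in \mathcal{G}_\text{ad}$ belongs to $\mathcal{Q}_\text{ad}^{\varepsilon_0}$. The argument reduces to an intermediate value theorem applied to the continuous function $t \mapsto x(t) - \gamma(t)$, where $x(t)$ is the position of a bouncing characteristic of slope $\pm 1$ starting at an arbitrary point $(\bar x, 0)$: the strict confinement $\delta_0 \leq \gamma \leq 1-\delta_0$ forces $x(t)$ and $\gamma(t)$ to cross well inside $[0,1]$, with an explicit transversality margin (depending only on $\delta_0$ and $M$) that survives the contraction $q_\gamma \rightsquigarrow q_\gamma^{\varepsilon_0}$.

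Once this inclusion is established, Theorem~\ref{thm:unif_obs} provides a constant $C_\text{obs}^{\varepsilon_0}$ such that
\begin{equation*}
    \| (\psi_0, \psi_1) \|_\mathbf{V}^2 \leq C_\text{obs}^{\varepsilon_0} \| \psi_t \|_{L^2(q_\gamma)}^2, \quad \forall (\psi_0, \psi_1) \in \mathbf{V}, \quad \forall \gamma \in \mathcal{G}_\text{ad},
\end{equation*}
where $\psi$ denotes the associated solution of~\eqref{eq:adj_wave}. Given $(\varphi_0, \varphi_1) \in \mathbf{W}$, I would then define $\Psi_0 \in H^1_0(\Omega)$ as the unique solution of $\Psi_0'' = \varphi_1$ in the distributional sense, which is well-posed by the isomorphism $-\partial_x^2 : H^1_0(\Omega) \to H^{-1}(\Omega)$ and satisfies the isometry $\| \Psi_0 \|_{H^1_0(\Omega)} = \| \varphi_1 \|_{H^{-1}(\Omega)}$. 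Letting $\psi$ solve~\eqref{eq:adj_wave} with initial datum $(\Psi_0, \varphi_0) \in \mathbf{V}$, differentiation in time shows that $\psi_t$ also solves~\eqref{eq:adj_wave} with initial datum $(\varphi_0, \Psi_0'') = (\varphi_0, \varphi_1)$; uniqueness forces $\psi_t \equiv \varphi$. Applying the $\mathbf{V}$-inequality above to $\psi$ and using $\| (\Psi_0, \varphi_0) \|_\mathbf{V}^2 = \| (\varphi_0, \varphi_1) \|_\mathbf{W}^2$ immediately yields~\eqref{eq:unif_obs_W_gmm} with $C_\text{obs} = C_\text{obs}^{\varepsilon_0}$.

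The only substantive step is the first one: checking uniformly in $\gamma$ that the geometric optics condition persists after the $\varepsilon$-shrinking, which relies crucially on the uniform Lipschitz bound $M$ and the uniform separation $\delta_0$ from the lateral boundary $\partial\Omega \times (0,T)$. The primitive-in-time reduction is a classical algebraic manoeuvre and presents no real difficulty.
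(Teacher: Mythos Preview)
Your proposal is correct and follows the same two-step strategy as the paper: first prove the uniform inclusion $\{q_\gamma;\ \gamma\in\mathcal{G}_\text{ad}\}\subset\mathcal{Q}_\text{ad}^\varepsilon$ for some $\varepsilon=\varepsilon(\delta_0,M)$, then invoke Theorem~\ref{thm:unif_obs}. The only differences are cosmetic: for the geometric step the paper argues via a half-width tube $\widetilde{q}_\gamma$ (width $\delta_0/2$) and the explicit distance bound $d(\widetilde{\Gamma}_\pm,\Gamma_\pm)\geq \delta_0/(2\sqrt{M^2+1})$ rather than your IVT on $t\mapsto x(t)-\gamma(t)$, and the paper takes the $\mathbf{V}\to\mathbf{W}$ lift for granted (having declared~\eqref{eq:unif_obs_W} and~\eqref{eq:unif_obs_V} equivalent at the start of Section~\ref{sect:unif_obs}) whereas you spell out the primitive-in-time trick explicitly.
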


\begin{proof}
We show that for any $\varepsilon > 0$ small enough, $\{ q_\gamma;\ \gamma \in \mathcal{G}_\text{ad} \} \subset \mathcal{Q}_\text{ad}^\varepsilon$. Let $\gamma \in \mathcal{G}_\text{ad}$. We introduce the sets
$\Gamma_\pm  = \big\{ (\gamma(t) \pm \delta_0, t); t \in [0,T] \big\}$, $\widetilde{\Gamma}_\pm  = \big\{ (\gamma(t) \pm \frac{\delta_0}{2}, t); t \in [0,T] \big\}$ and $Q_T^\varepsilon  = \Omega \times (\varepsilon, T - \varepsilon)$. $\gamma$ being a $M$-Lipschitz curve, we can show that
\begin{equation*}
    d(\widetilde{\Gamma}_\pm, \Gamma_\pm) \geq \frac{\delta_0}{2 \sqrt{M^2 + 1}}.
\end{equation*}
Then, for $\varepsilon < \frac{\delta_0}{2 \sqrt{M^2 + 1}}$, we have $\widetilde{q}_\gamma \cap Q_T^\varepsilon \subset q_\gamma^\varepsilon$, with the observation domain $\widetilde{q}_\gamma$ defined as in~\eqref{eq:q_gmm} with a half-width of $\delta_0/2$. The domain $\widetilde{q}_\gamma \cap Q_T^\varepsilon$ verifies the geometric optics condition because $\delta_0 \leq \gamma \leq 1 - \delta_0$ and $T - 2 \varepsilon \geq 2 (1 - \delta_0)$. Consequently, $q_\gamma^\varepsilon$ also verifies the geometric optics condition and $q_\gamma \in \mathcal{Q}_\text{ad}^\varepsilon$. We conclude the proof by noticing that the constant $\frac{\delta_0}{2 \sqrt{M^2 + 1}}$ is independent of the choice of $\gamma$.
\end{proof}

\section{Optimization of the shape of the control domain}
\label{sect:shp_opt}

In this section, we study the problem of finding the optimal shape and position of the control domain, for a given initial condition $(y_0,y_1) \in \mathbf{V}$.

\subsection{Existence of an optimal domain}

In order to show a well-posedness result, we consider a variant of the optimal problem~\eqref{eq:opt_pb} and replace the characteristic function $\mathbbm{1}_q$ in~\eqref{eq:ctrl_wave} by a more regular function in space. More precisely, we fix $\delta \in (0, \delta_0)$ and, for every $\gamma \in \mathcal{G}_\text{ad}$, we define $\chi_\gamma(x,t) = \chi(x - \gamma(t))$, with $\chi : \mathbb{R} \to [0,1]$ a $C^1$ even function such that
\begin{equation}\label{eq:chi}
    \chi(x) =
    \left\{
    \begin{array}{ll}
        1 & \text{if } x \in (-\delta_0 + \delta, \delta_0 - \delta), \\
        0 & \text{if } x \notin (-\delta_0, \delta_0), \\
        \in (0,1) & \text{otherwise}.
    \end{array}
    \right.
\end{equation}
In the sequel, we will also use the function $\chi_\gamma'$ defined by $\chi_\gamma'(x,t) = \chi'(x - \gamma(t))$. In this new setting, the HUM control now lives in the weighted space
\begin{equation}\nonumber 
    L^2_\chi(q_\gamma) \coloneqq L^2(q_\gamma; \chi_\gamma) = \left\{ v : q_\gamma \to \mathbb{R}; \quad \iint_{q_\gamma} v^2 \chi_\gamma < +\infty \right\}.
\end{equation}
Moreover, we can adapt the uniform observability inequality given in Corollary~\ref{cor:unif_obs_W_gmm}. For $T \geq 2$, there exists a constant $C_\text{obs} > 0$ such that for every $\gamma \in \mathcal{G}_\text{ad}$,
\begin{equation}\label{eq:unif_obs_W_chi}
    \| (\varphi_0, \varphi_1) \|_\mathbf{W}^2 \leq C_\text{obs} \| \varphi \|_{L^2_\chi(q_\gamma)}^2, \quad \forall (\varphi_0, \varphi_1) \in \mathbf{W},
\end{equation}
where $\varphi$ is the solution of~\eqref{eq:adj_wave} associated to the initial condition $(\varphi_0, \varphi_1)$.

Then, our optimization problem reads as follows: for a given initial datum $(y_0,y_1) \in \mathbf{V}$, solve
\begin{equation}\label{eq:min_J}
    \inf_{\gamma \in \mathcal{G}_\text{ad}} J(\gamma) = \| v \|_{L^2_\chi(q_\gamma)}^2 = \iint_{q_\gamma} \varphi^2 \chi_\gamma,
\end{equation}
where $v$ is the control of minimal $L^2_\chi$-norm distributed over $q_\gamma \subset Q_T$, and $\varphi$ is the associated adjoint state such that $v = \varphi |_{q_\gamma}$. This adjoint state can be characterized using the HUM method, it is the solution of~\eqref{eq:adj_wave} associated to the minimum $(\varphi_0, \varphi_1)$ of the conjugate functional
\begin{equation}\label{eq:J*_gmm}
    \mathcal{J}^\star_\gamma(\varphi_0, \varphi_1) = \frac{1}{2} \iint_{q_\gamma} \varphi^2 \chi_\gamma - \langle \varphi_1, y_0 \rangle_{H^{-1}(\Omega), H^1_0(\Omega)} + \langle \varphi_0, y_1\rangle_{L^2(\Omega)}, \quad \forall (\varphi_0, \varphi_1) \in \mathbf{W}.
\end{equation}

To show the well-posedness of~\eqref{eq:min_J}, we follow the steps of~\cite[Theorem~2.1]{Periago09}. We start with a convergence result on the function $\chi_\gamma$.

\begin{lemma}\label{lem:chi_gmm_cvg}
Let $(\gamma_n)_{n \geq 0} \subset \mathcal{G}_\text{ad}$ and $\gamma \in \mathcal{G}_\text{ad}$. If $\gamma_n \to \gamma$ in $L^\infty(0,T)$, then $\chi_{\gamma_n} \to \chi_\gamma$ in $L^\infty(Q_T)$.
\end{lemma}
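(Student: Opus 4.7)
The plan is to exploit the fact that $\chi$ is a $C^1$ function with compact support in $(-\delta_0, \delta_0)$, so that $\chi'$ is continuous with compact support, hence bounded. This gives $\chi$ globally Lipschitz on $\mathbb{R}$ with constant $L = \|\chi'\|_{L^\infty(\mathbb{R})}$.

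With this in hand, the proof reduces to a direct estimate. For every $(x,t) \in Q_T$,
\begin{equation*}
    |\chi_{\gamma_n}(x,t) - \chi_\gamma(x,t)| = |\chi(x - \gamma_n(t)) - \chi(x - \gamma(t))| \leq L\, |\gamma_n(t) - \gamma(t)| \leq L\, \|\gamma_n - \gamma\|_{L^\infty(0,T)}.
\end{equation*}
Taking the essential supremum over $(x,t) \in Q_T$ on the left-hand side yields
\begin{equation*}
    \|\chi_{\gamma_n} - \chi_\gamma\|_{L^\infty(Q_T)} \leq L\, \|\gamma_n - \gamma\|_{L^\infty(0,T)},
\end{equation*}
and the assumption $\gamma_n \to \gamma$ in $L^\infty(0,T)$ concludes the proof.

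There is essentially no obstacle here. The only delicate point worth emphasizing is why $L < \infty$: the derivative $\chi'$ is continuous on $\mathbb{R}$ by assumption and vanishes outside the compact set $[-\delta_0, \delta_0]$ (since $\chi$ is constant equal to $0$ or $1$ outside the transition regions), so $\chi'$ is bounded. One could alternatively invoke uniform continuity of $\chi$ (again a consequence of continuity plus compact support of $\chi - \mathbbm{1}_{\{|\cdot| < \delta_0 - \delta\}}$ being irrelevant; it suffices that $\chi$ is uniformly continuous on $\mathbb{R}$), but the Lipschitz argument is the cleanest and also gives a quantitative rate that will be useful in subsequent continuity/compactness arguments for the functional $J$.
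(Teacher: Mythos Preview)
Your proof is correct and essentially identical to the paper's: both apply the Lipschitz/Taylor inequality for $\chi$ to obtain $\|\chi_{\gamma_n}-\chi_\gamma\|_{L^\infty(Q_T)} \leq \|\chi'\|_{L^\infty(\mathbb{R})}\|\gamma_n-\gamma\|_{L^\infty(0,T)}$. Your additional justification that $\|\chi'\|_{L^\infty(\mathbb{R)}}<\infty$ is a welcome clarification but does not change the argument.
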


\begin{proof}
It is a direct consequence of the Taylor's inequality applied to $\chi$. Indeed, this inequality gives
\begin{equation*}
    \| \chi_{\gamma_n} - \chi_\gamma \|_{L^\infty(Q_T)} \leq \| \chi' \|_{L^\infty(\mathbb{R})} \| \gamma_n - \gamma \|_{L^\infty(0,T)}.
\end{equation*}
\end{proof}

We then have that the following continuity result. 

\begin{proposition}\label{prop:J_cont}
The cost $J$ is continuous over $\mathcal{G}_\text{ad}$ for the norm $L^\infty(0,T)$.
\end{proposition}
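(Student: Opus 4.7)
The plan is to take a sequence $(\gamma_n)_{n\geq 0}\subset \mathcal{G}_\text{ad}$ with $\gamma_n\to\gamma$ in $L^\infty(0,T)$ and prove $J(\gamma_n)\to J(\gamma)$. Let $(\varphi_0^n,\varphi_1^n)\in \mathbf{W}$ denote the unique minimizer of $\mathcal{J}^\star_{\gamma_n}$ (existence and uniqueness follow from the uniform observability inequality~\eqref{eq:unif_obs_W_chi}, which gives coercivity and strict convexity), let $\varphi^n$ be the associated solution of~\eqref{eq:adj_wave}, and define $(\varphi_0,\varphi_1)$, $\varphi$ analogously for $\gamma$. Testing the Euler--Lagrange equation for $\mathcal{J}^\star_{\gamma_n}$ at its own minimizer yields the HUM identity
\begin{equation*}
J(\gamma_n)=\iint_{Q_T}(\varphi^n)^2\,\chi_{\gamma_n}=\langle \varphi_1^n,y_0\rangle_{H^{-1},H^1_0}-\langle \varphi_0^n,y_1\rangle_{L^2(\Omega)}.
\end{equation*}
Combining this identity with the Cauchy--Schwarz inequality and~\eqref{eq:unif_obs_W_chi} provides the $n$-uniform bound $\|(\varphi_0^n,\varphi_1^n)\|_{\mathbf{W}}\leq C_\text{obs}\|(y_0,y_1)\|_{\mathbf{V}}$.

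Up to a subsequence (not relabeled), I would assume $(\varphi_0^n,\varphi_1^n)\rightharpoonup(\widetilde\varphi_0,\widetilde\varphi_1)$ weakly in $\mathbf{W}$. Since the linear map $\mathbf{W}\to L^2(Q_T)$ sending an initial datum to the corresponding solution of~\eqref{eq:adj_wave} is continuous, this propagates to $\varphi^n\rightharpoonup\widetilde\varphi$ weakly in $L^2(Q_T)$, where $\widetilde\varphi$ denotes the solution with datum $(\widetilde\varphi_0,\widetilde\varphi_1)$. The decisive step is the passage to the limit in the Euler--Lagrange equation: for every test datum $(\psi_0,\psi_1)\in \mathbf{W}$ with associated solution $\psi$,
\begin{equation*}
\iint_{Q_T}\varphi^n\,\psi\,\chi_{\gamma_n}=\langle \psi_1,y_0\rangle-\langle \psi_0,y_1\rangle.
\end{equation*}
Splitting $\chi_{\gamma_n}=\chi_\gamma+(\chi_{\gamma_n}-\chi_\gamma)$, the first piece tends to $\iint_{Q_T}\widetilde\varphi\,\psi\,\chi_\gamma$ by weak convergence tested against the fixed $L^2(Q_T)$ function $\psi\chi_\gamma$, while the remainder is controlled by $\|\chi_{\gamma_n}-\chi_\gamma\|_{L^\infty(Q_T)}\|\varphi^n\|_{L^2(Q_T)}\|\psi\|_{L^2(Q_T)}$ and vanishes thanks to Lemma~\ref{lem:chi_gmm_cvg} together with the uniform $L^2$-bound on $\varphi^n$. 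The limit identity is exactly the optimality condition for $\mathcal{J}^\star_\gamma$, so by uniqueness $(\widetilde\varphi_0,\widetilde\varphi_1)=(\varphi_0,\varphi_1)$; since the limit does not depend on the extracted subsequence, the whole sequence converges weakly.

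To conclude, I return to the HUM identity above and use that $y_0\in H^1_0(\Omega)$ and $y_1\in L^2(\Omega)$ pair continuously with the weakly converging $(\varphi_1^n,\varphi_0^n)$ in $\mathbf{W}$, yielding
\begin{equation*}
J(\gamma_n)=\langle \varphi_1^n,y_0\rangle-\langle \varphi_0^n,y_1\rangle\longrightarrow\langle \varphi_1,y_0\rangle-\langle \varphi_0,y_1\rangle=J(\gamma),
\end{equation*}
as desired. The only delicate point is the passage to the limit in the varying-support bilinear form defining the minimization; Lemma~\ref{lem:chi_gmm_cvg} is precisely the tool that decouples the weak convergence of the adjoint states from the drift of the weight $\chi_{\gamma_n}$, reducing that otherwise subtle step to a routine estimate.
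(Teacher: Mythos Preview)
Your proof is correct and follows essentially the same route as the paper: uniform observability gives a bound on the minimizers, weak compactness plus Lemma~\ref{lem:chi_gmm_cvg} allows passage to the limit in the optimality condition, uniqueness identifies the weak limit, and the HUM identity yields the continuity of $J$. The only cosmetic difference is that the paper isolates the intermediate convergence $\varphi^n\chi_{\gamma_n}\rightharpoonup\varphi\chi_\gamma$ in $L^2(Q_T)$ before testing it against $\psi$, whereas you merge this directly into the limit of the Euler--Lagrange equation.
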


\begin{proof}
Let $(\gamma_n)_{n \geq 0} \subset \mathcal{G}_\text{ad}$ and $\gamma \in \mathcal{G}_\text{ad}$ such that $\gamma_n \to \gamma$ in $L^\infty(0,T)$ as $n\to \infty$.

For any $n \in \mathbb{N}$, we denote $(\varphi_0^n, \varphi_1^n) \in \mathbf{W}$ the minimum of $\mathcal{J}^\star_{\gamma_n}$, and $\varphi^n$ the corresponding solution of~\eqref{eq:adj_wave}. Using the uniform observability inequality~\eqref{eq:unif_obs_W_chi} and the optimality condition of $\mathcal{J}^\star_{\gamma_n}$, it follows that
\begin{align*}
      \| (\varphi_0^n, \varphi_1^n) \|_\mathbf{W}^2\leq  C_\text{obs} \iint_{q_{\gamma_n}} (\varphi^n)^2 \chi_{\gamma_n}  &=  C_\text{obs} \left( \langle \varphi_1^n, y_0 \rangle_{H^{-1}, H^1_0} - \langle \varphi_0^n, y_1\rangle_{L^2} \right) \\
    & \leq  C_\text{obs} \| (\varphi_0^n, \varphi_1^n) \|_\mathbf{W} \| (y_0, y_1) \|_\mathbf{V}
\end{align*}
leading to the uniform bound $\| (\varphi_0^n, \varphi_1^n) \|_\mathbf{W}\leq C_\text{obs} \| (y_0, y_1) \|_\mathbf{V}$. Consequently, there exist two functions $\varphi_0 \in L^2(\Omega)$ and $\varphi_1 \in H^{-1}(\Omega)$ such that, up to a subsequence, as $n\to \infty$, we have
\begin{equation*}
    \varphi_0^n \rightharpoonup \varphi_0 \text{ weakly in } L^2(\Omega) \quad \text{and} \quad \varphi_1^n \rightharpoonup \varphi_1 \text{ weakly in } H^{-1}(\Omega).
\end{equation*}
From the continuous dependence of the solution of the wave equation with respect to the initial condition, it follows
\begin{equation*}
    \varphi^n \rightharpoonup \varphi \quad \text{weakly in } L^2(0,T; L^2(\Omega)),
\end{equation*}
where $\varphi$ is the solution of~\eqref{eq:adj_wave} associated to $(\varphi_0, \varphi_1)$.

Let $\psi \in L^2(0,T; L^2(\Omega))$. We then have
\begin{equation*}
    \iint_{Q_T} \psi \varphi^n \chi_{\gamma_n} = \iint_{Q_T} \psi \varphi^n \chi_\gamma + \iint_{Q_T} \psi \varphi^n (\chi_{\gamma_n} - \chi_\gamma) \to \iint_{Q_T} \psi \varphi \chi_\gamma.
\end{equation*}
Indeed, we can take the weak limit in the first term because $\psi \chi_\gamma \in L^2(0,T; L^2(\Omega))$. Using Lemma~\ref{lem:chi_gmm_cvg} and the boundedness of $(\varphi^n)_{n \geq 0}$ in $L^2(0,T; L^2(\Omega))$, the second term converges to $0$ because
\begin{equation*}
    \left| \iint_{Q_T} \psi \varphi^n (\chi_{\gamma_n} - \chi_\gamma) \right| \leq \| \psi \|_{L^2(L^2)} \| \varphi^n \|_{L^2(L^2)} \| \chi_{\gamma_n} - \chi_\gamma \|_{L^\infty(Q_T)}.
\end{equation*}
Consequently, we obtain the convergence
\begin{equation*}
    \varphi^n \chi_{\gamma_n} \rightharpoonup \varphi \chi_\gamma \quad \text{weakly in } L^2(0,T; L^2(\Omega)).
\end{equation*}

Let now $(\psi_0, \psi_1) \in \mathbf{W}$ and $\psi$ the corresponding solution of~\eqref{eq:adj_wave}. Taking the weak limit in the optimality condition
\begin{equation*}
    \iint_{q_{\gamma_n}} \psi \varphi^n \chi_{\gamma_n} = \langle \psi_1, y_0 \rangle_{H^{-1}, H^1_0} - \int_\Omega \psi_0 y_1,
\end{equation*}
we find
\begin{equation*}
    \iint_{q_\gamma} \psi \varphi \chi_\gamma = \langle \psi_1, y_0 \rangle_{H^{-1}, H^1_0} - \int_\Omega \psi_0 y_1.
\end{equation*}
This means that $(\varphi_0, \varphi_1)$ is the minimum of $\mathcal{J}^\star_\gamma$. Besides, we remark that this property uniquely characterizes the weak limit of any subsequence of $(\varphi_0^n, \varphi_1^n)$. This implies that the whole sequence $(\varphi_0^n, \varphi_1^n)$ weakly converges. The continuity of $J$ is finally obtain by taking the weak limit in the optimality condition
\begin{equation*}
    \iint_{q_{\gamma_n}} (\varphi^n)^2 \chi_{\gamma_n} = \langle \varphi_1^n, y_0 \rangle_{H^{-1}, H^1_0} - \int_\Omega \varphi_0^n y_1 \to \langle \varphi_1, y_0 \rangle_{H^{-1}, H^1_0} - \int_\Omega \varphi_0 y_1 = \iint_{q_\gamma} \varphi^2 \chi_\gamma.
\end{equation*}
\end{proof}

The continuity of $J$ then allows to show that the extremal problem~\eqref{eq:min_J} is well-posed.

\begin{proposition}\label{prop:min_J_WP}
The cost $J$ reaches its minimum over $\mathcal{G}_\text{ad}$.
\end{proposition}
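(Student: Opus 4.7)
The plan is to apply the direct method of the calculus of variations, using Proposition~\ref{prop:J_cont} as the decisive ingredient. Since $J \geq 0$, the infimum $m = \inf_{\gamma \in \mathcal{G}_\text{ad}} J(\gamma)$ is finite, so I can pick a minimizing sequence $(\gamma_n)_{n \geq 0} \subset \mathcal{G}_\text{ad}$ with $J(\gamma_n) \to m$.

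First I would extract a subsequence converging in the $L^\infty(0,T)$ topology. The set $\mathcal{G}_\text{ad}$ is a bounded subset of $W^{1,\infty}(0,T)$: every $\gamma \in \mathcal{G}_\text{ad}$ satisfies $\|\gamma\|_{L^\infty(0,T)} \leq 1 - \delta_0$ and $\|\gamma'\|_{L^\infty(0,T)} \leq M$. Hence the family $(\gamma_n)_{n \geq 0}$ is uniformly bounded and uniformly equicontinuous (each $\gamma_n$ is $M$-Lipschitz), so Arzelà--Ascoli yields a subsequence (still denoted $(\gamma_n)$) and a limit $\gamma \in C([0,T])$ such that $\gamma_n \to \gamma$ uniformly on $[0,T]$, i.e.\ in $L^\infty(0,T)$.

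Next I would check that $\gamma \in \mathcal{G}_\text{ad}$. The pointwise bounds $\delta_0 \leq \gamma_n(t) \leq 1 - \delta_0$ pass to the uniform limit, giving $\delta_0 \leq \gamma \leq 1 - \delta_0$. For the Lipschitz constraint, the uniform limit of $M$-Lipschitz functions is $M$-Lipschitz, hence $\gamma \in W^{1,\infty}(0,T)$ with $\|\gamma'\|_{L^\infty(0,T)} \leq M$. Therefore $\gamma \in \mathcal{G}_\text{ad}$.

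Finally, Proposition~\ref{prop:J_cont} gives the continuity of $J$ for the $L^\infty(0,T)$ topology, so $J(\gamma_n) \to J(\gamma)$, which together with $J(\gamma_n) \to m$ yields $J(\gamma) = m = \min_{\mathcal{G}_\text{ad}} J$. There is no serious obstacle: the compactness of $\mathcal{G}_\text{ad}$ in $L^\infty(0,T)$ and the stability of all the defining constraints under uniform convergence are standard, and the nontrivial continuity work has already been done in Proposition~\ref{prop:J_cont}.
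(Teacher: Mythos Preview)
Your proof is correct and follows essentially the same approach as the paper's: a minimizing sequence, compactness in $L^\infty(0,T)$ (you invoke Arzel\`a--Ascoli where the paper cites the compact embedding $W^{1,\infty}(0,T)\hookrightarrow L^\infty(0,T)$), stability of the constraints under uniform limits, and then Proposition~\ref{prop:J_cont} to conclude.
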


\begin{proof}
The cost $J$ being bounded by below, there exists a minimizing sequence $(\gamma_n)_{n \geq 0} \subset \mathcal{G}_\text{ad}$. By definition of $\mathcal{G}_\text{ad}$, this sequence is bounded in $W^{1,\infty}(0,T)$. By the generalized Rellich theorem, $W^{1,\infty}(0,T)$ is compactly embedded in $L^\infty(0,T)$. Consequently, there exists a curve $\gamma \in L^\infty(0,T)$ such that, up to a subsequence, $\gamma_n \to \gamma$ in $L^\infty(0,T)$. From the definition of $\mathcal{G}_\text{ad}$, all the curves $\gamma_n$ are $M$-Lipschitzian, with $M$ independent of $n$. So, taking the pointwise limit in the expressions
\begin{equation*}
    \begin{array}{ll}
        |\gamma_n(t) - \gamma_n(s)| \leq M |t - s|, & \forall t,s \in [0,T], \\[2mm]
        \delta_0 \leq \gamma_n(t) \leq 1 - \delta_0, & \forall t \in [0,T],
    \end{array}
\end{equation*}
we notice that $\gamma \in \mathcal{G}_\text{ad}$. Finally, using Proposition~\ref{prop:J_cont}, we obtain $J(\gamma_n) \to J(\gamma) = \inf_{\mathcal{G}_\text{ad}} J$
which means that $\gamma$ is a minimum of $J$ over $\mathcal{G}_\text{ad}$.
\end{proof}

\subsection{First directional derivative of the cost}

We now give the expression of the directional derivative of the cost $J$.

\begin{definition}\label{def:adm_pert}
Let $\gamma, \overline{\gamma} \in W^{1,\infty}(0,T)$, with $\delta_0 \leq \gamma \leq 1 - \delta_0$. The perturbation $\overline{\gamma}$ is said admissible if and only if for any $\eta > 0$ small enough, the perturbed curve $\gamma_\eta = \gamma + \eta \overline{\gamma}$ verifies $\delta_0 \leq \gamma_\eta \leq 1 - \delta_0$.
\end{definition}

\begin{lemma}\label{lem:chi_gmm_deriv}
Let $\chi\in C^2(\mathbb{R})$ and $\gamma, \overline{\gamma} \in W^{1,\infty}(0,T)$, with $\delta_0 \leq \gamma \leq 1 - \delta_0$. For any $\eta > 0$, we define the perturbed curve $\gamma_\eta = \gamma + \eta \overline{\gamma}$. Taking $\eta \to 0$, we then have
\begin{equation*}
    \frac{\chi_{\gamma_\eta} - \chi_\gamma}{\eta} \to -\overline{\gamma} \chi'_\gamma \quad \text{in } L^\infty(Q_T).
\end{equation*}
\end{lemma}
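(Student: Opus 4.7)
The plan is to view $\chi_\gamma(x,t)=\chi(x-\gamma(t))$ as the composition of the one-variable $C^2$ function $\chi$ with the affine perturbation of $\gamma$, and then to apply a pointwise second-order Taylor expansion in the perturbation parameter $\eta$. Concretely, for each fixed $(x,t)\in Q_T$, I would expand $s\mapsto \chi(x-\gamma(t)-s\,\overline{\gamma}(t))$ around $s=0$ and evaluate at $s=\eta$ with a Lagrange remainder, obtaining
\begin{equation*}
    \chi(x-\gamma_\eta(t)) \;=\; \chi(x-\gamma(t)) \;-\; \eta\,\overline{\gamma}(t)\,\chi'(x-\gamma(t)) \;+\; R_\eta(x,t),
\end{equation*}
where the remainder satisfies
\begin{equation*}
    |R_\eta(x,t)| \;\leq\; \tfrac{\eta^2}{2}\,\overline{\gamma}(t)^2\,\|\chi''\|_{L^\infty(\mathbb{R})}.
\end{equation*}

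To make this uniform on $Q_T$, I would invoke two elementary facts: first, since $\chi\in C^2(\mathbb{R})$ and $\chi$ vanishes outside the compact interval $[-\delta_0,\delta_0]$, the second derivative $\chi''$ is continuous with compact support, hence $\|\chi''\|_{L^\infty(\mathbb{R})}<+\infty$; second, $\overline{\gamma}\in W^{1,\infty}(0,T)\hookrightarrow L^\infty(0,T)$, so $\|\overline{\gamma}\|_{L^\infty(0,T)}<+\infty$. Dividing the Taylor identity by $\eta$ and rearranging then yields the pointwise identity
\begin{equation*}
    \frac{\chi_{\gamma_\eta}(x,t)-\chi_\gamma(x,t)}{\eta} \;+\; \overline{\gamma}(t)\,\chi'_\gamma(x,t) \;=\; \frac{R_\eta(x,t)}{\eta},
\end{equation*}
whose right-hand side is bounded in absolute value by $\tfrac{\eta}{2}\,\|\overline{\gamma}\|_{L^\infty(0,T)}^2\,\|\chi''\|_{L^\infty(\mathbb{R})}$, a quantity that tends to $0$ as $\eta\to 0$. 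Taking the $L^\infty(Q_T)$ norm of both sides gives the desired convergence.

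There is essentially no obstacle here: the statement is a uniform differentiability result for a composition involving a $C^2$ function with compactly supported derivatives, so Taylor's theorem directly provides the required control. The only minor point worth noting is that the admissibility condition $\delta_0\leq\gamma_\eta\leq 1-\delta_0$ introduced in Definition~\ref{def:adm_pert} plays no role in the proof of this pointwise/uniform statement on $Q_T$; it will only become relevant when the lemma is subsequently combined with the wave-equation machinery (for instance when differentiating the cost $J$), since it ensures that $\gamma_\eta\in\mathcal{G}_\text{ad}$ and hence that $q_{\gamma_\eta}$ remains an admissible control support.
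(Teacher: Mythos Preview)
Your proposal is correct and follows essentially the same approach as the paper: both apply Taylor's inequality (second-order expansion with Lagrange remainder) to $\chi$ at the point $x-\gamma(t)$ with increment $-\eta\,\overline{\gamma}(t)$, obtaining the uniform bound $\|\chi_{\gamma_\eta}-\chi_\gamma+\eta\,\overline{\gamma}\,\chi'_\gamma\|_{L^\infty(Q_T)}\leq \tfrac{\eta^2}{2}\|\chi''\|_{L^\infty(\mathbb{R})}\|\overline{\gamma}\|_{L^\infty(0,T)}^2$, from which the conclusion follows after dividing by $\eta$. Your additional remarks on why $\|\chi''\|_{L^\infty(\mathbb{R})}$ is finite and on the irrelevance of the admissibility constraint for this particular lemma are accurate side observations.
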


\begin{proof}
It is a direct consequence of the Taylor's inequality applied to $\chi$. Indeed, this inequality gives
\begin{equation*}
    \| \chi_{\gamma_\eta} - \chi_\gamma + \eta \overline{\gamma} \chi'_\gamma \|_{L^\infty(Q_T)} \leq \frac{\eta^2}{2} \| \chi'' \|_{L^\infty(\mathbb{R})} \| \overline{\gamma} \|_{L^\infty(0,T)}^2.
\end{equation*}
\end{proof}

\begin{proposition}\label{prop:dJ}
Let $\chi\in C^2(\mathbb{R})$ and $\gamma, \overline{\gamma} \in W^{1,\infty}(0,T)$, with $\delta_0 \leq \gamma \leq 1 - \delta_0$. For any $\eta > 0$, we define the perturbed curve $\gamma_\eta = \gamma + \eta \overline{\gamma}$. If $\overline{\gamma}$ is an admissible perturbation, then the directional derivative of $J$ at $\gamma$ in the direction $\overline{\gamma}$, denoted by $dJ(\gamma; \overline{\gamma})$, reads
\begin{equation}\label{eq:dJ}
    dJ(\gamma; \overline{\gamma}) = \lim_{\eta \to 0} \frac{J(\gamma_\eta) - J(\gamma)}{\eta} = \int_0^T \overline{\gamma} \int_\Omega \varphi^2 \chi'_\gamma,
\end{equation}
where $\varphi$ is the solution of~\eqref{eq:adj_wave} associated to the minimum $(\varphi_0, \varphi_1)$ of $\mathcal{J}^\star_\gamma$.
\end{proposition}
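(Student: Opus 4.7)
The starting point is to rewrite the cost via the optimality satisfied by the HUM minimizer. Taking $(\psi_0,\psi_1)=(\varphi_0,\varphi_1)$ as test in the Euler--Lagrange equation of~\eqref{eq:J*_gmm} yields $\iint_{q_\gamma}\varphi^2\chi_\gamma = \langle\varphi_1,y_0\rangle_{H^{-1},H^1_0} - \langle\varphi_0,y_1\rangle_{L^2}$, whence the identity
\begin{equation*}
J(\gamma) = -2\,E(\gamma), \qquad E(\gamma) := \min_{\mathbf{W}} \mathcal{J}^\star_\gamma = \mathcal{J}^\star_\gamma(\varphi_0,\varphi_1).
\end{equation*}
Since only the quadratic term of $\mathcal{J}^\star_\gamma$ depends explicitly on $\gamma$ (through $\chi_\gamma$), computing $dE(\gamma;\overline{\gamma})$ is an envelope-theorem argument in which only this explicit variation should contribute to the limit.

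To make this rigorous, I would denote $(\varphi_0^\eta,\varphi_1^\eta)$ the minimizer of $\mathcal{J}^\star_{\gamma_\eta}$ and $\varphi^\eta$ its associated adjoint (both well defined for $\eta>0$ small by admissibility of $\overline{\gamma}$ and by Corollary~\ref{cor:unif_obs_W_gmm}). The optimality of $(\varphi_0^\eta,\varphi_1^\eta)$ at $\gamma_\eta$ and of $(\varphi_0,\varphi_1)$ at $\gamma$ yields, for $\eta > 0$ small, the sandwich
\begin{equation*}
\frac{1}{2}\iint_{Q_T}(\varphi^\eta)^2\,\frac{\chi_{\gamma_\eta}-\chi_\gamma}{\eta} \ \leq\ \frac{E(\gamma_\eta)-E(\gamma)}{\eta} \ \leq\ \frac{1}{2}\iint_{Q_T}\varphi^2\,\frac{\chi_{\gamma_\eta}-\chi_\gamma}{\eta}.
\end{equation*}
By Lemma~\ref{lem:chi_gmm_deriv} (which uses the additional hypothesis $\chi \in C^2$), the upper bound converges as $\eta\to 0^+$ to $-\tfrac{1}{2}\iint_{Q_T}\overline{\gamma}\,\varphi^2\,\chi'_\gamma$.

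The main obstacle is passing to the limit in the lower bound, which requires strong convergence $\varphi^\eta\to\varphi$ in $L^2(Q_T)$. I would derive it from the strong convexity of $\mathcal{J}^\star_\gamma$ furnished by the uniform observability inequality~\eqref{eq:unif_obs_W_chi}: the quadratic expansion at the minimum gives
\begin{equation*}
\mathcal{J}^\star_\gamma(\varphi_0^\eta,\varphi_1^\eta) - \mathcal{J}^\star_\gamma(\varphi_0,\varphi_1) = \tfrac{1}{2}\iint_{Q_T}(\varphi^\eta-\varphi)^2\chi_\gamma \ \geq\ \tfrac{1}{2C_\text{obs}}\|(\varphi_0^\eta-\varphi_0,\varphi_1^\eta-\varphi_1)\|_\mathbf{W}^2.
\end{equation*}
Writing also $\mathcal{J}^\star_\gamma(\varphi_0^\eta,\varphi_1^\eta) - \mathcal{J}^\star_\gamma(\varphi_0,\varphi_1) = \tfrac{1}{2}\iint_{Q_T}(\varphi^\eta)^2(\chi_\gamma-\chi_{\gamma_\eta}) + [E(\gamma_\eta)-E(\gamma)]$, the first summand is $O(\eta)$ thanks to Lemma~\ref{lem:chi_gmm_cvg} and the uniform bound $\|\varphi^\eta\|_{L^2(Q_T)}\leq C$ already established in the proof of Proposition~\ref{prop:J_cont}, and the second is also $O(\eta)$ by the upper sandwich bound. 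Hence $(\varphi_0^\eta,\varphi_1^\eta)\to(\varphi_0,\varphi_1)$ strongly in $\mathbf{W}$, whence $\varphi^\eta\to\varphi$ strongly in $C([0,T];L^2(\Omega))$ by the continuous dependence of~\eqref{eq:adj_wave} on its initial data, and therefore $(\varphi^\eta)^2\to\varphi^2$ in $L^1(Q_T)$. Combined with $\|(\chi_{\gamma_\eta}-\chi_\gamma)/\eta + \overline{\gamma}\chi'_\gamma\|_{L^\infty(Q_T)}\to 0$ from Lemma~\ref{lem:chi_gmm_deriv}, the lower bound admits the same limit as the upper bound, so $dE(\gamma;\overline{\gamma}) = -\tfrac{1}{2}\int_0^T\overline{\gamma}\int_\Omega\varphi^2\chi'_\gamma$. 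Multiplying by $-2$ via the identity of the first paragraph yields formula~\eqref{eq:dJ}.
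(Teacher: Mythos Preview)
Your proof is correct, but the route differs from the paper's. The paper avoids your sandwich argument and the need for strong convergence altogether: by testing the Euler--Lagrange equation of $\mathcal{J}^\star_\gamma$ with $(\varphi_0^\eta,\varphi_1^\eta)$ and that of $\mathcal{J}^\star_{\gamma_\eta}$ with $(\varphi_0,\varphi_1)$, it obtains the \emph{exact bilinear} identity
\[
J(\gamma_\eta)-J(\gamma) \;=\; -\iint_{Q_T}\varphi^\eta\,\varphi\,(\chi_{\gamma_\eta}-\chi_\gamma),
\]
from which the limit follows using only the \emph{weak} convergence $\varphi^\eta\rightharpoonup\varphi$ in $L^2(Q_T)$ (since $\varphi\,\overline{\gamma}\,\chi'_\gamma\in L^2(Q_T)$) together with Lemma~\ref{lem:chi_gmm_deriv}. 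Your envelope-type argument instead requires upgrading to strong convergence $\varphi^\eta\to\varphi$ in $L^2(Q_T)$, which you obtain via the strong convexity estimate $\mathcal{J}^\star_\gamma(\varphi_0^\eta,\varphi_1^\eta)-\mathcal{J}^\star_\gamma(\varphi_0,\varphi_1)=\tfrac12\iint(\varphi^\eta-\varphi)^2\chi_\gamma$ and the uniform observability~\eqref{eq:unif_obs_W_chi}. The paper's cross-testing trick is shorter and demands less of the minimizers; your approach is more generic (it is the standard envelope mechanism for parametrized minima) and yields, as a by-product, the strong convergence of $(\varphi_0^\eta,\varphi_1^\eta)$ in $\mathbf{W}$, which is of independent interest.
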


\begin{proof}
For $\eta > 0$ small enough, we denote $(\varphi_0^\eta, \varphi_1^\eta)$ the minimum of $\mathcal{J}^\star_{\gamma_\eta}$, and $\varphi^\eta$ the corresponding solution of~\eqref{eq:adj_wave}. Likewise, we denote $(\varphi_0, \varphi_1)$ the minimum of $\mathcal{J}^\star_\gamma$, and $\varphi$ the corresponding solution of~\eqref{eq:adj_wave}. Using the optimality conditions of $\mathcal{J}^\star_{\gamma_\eta}$ and $\mathcal{J}^\star_\gamma$, we can write
\begin{align*}
    J(\gamma_\eta) - J(\gamma) & = \iint_{q_{\gamma_\eta}} (\varphi^\eta)^2 \chi_{\gamma_\eta} - \iint_{q_\gamma} \varphi^2 \chi_\gamma \\
    & = \left( \langle \varphi_1^\eta, y_0 \rangle_{H^{-1}, H^1_0} - \int_\Omega \varphi_0^\eta y_1 \right) - \left( \langle \varphi_1, y_0 \rangle_{H^{-1}, H^1_0} - \int_\Omega \varphi_0 y_1 \right) \\
    & = \iint_{q_\gamma} \varphi^\eta \varphi \chi_\gamma - \iint_{q_{\gamma_\eta}} \varphi \varphi^\eta \chi_{\gamma_\eta} = -\iint_{Q_T} \varphi^\eta \varphi (\chi_{\gamma_\eta} - \chi_\gamma).
\end{align*}
Arguying as in the proof of Proposition~\ref{prop:J_cont}, we can show that $\varphi^\eta \rightharpoonup \varphi$ weakly in $L^2(0,T; L^2(\Omega))$. As a result, we have
\begin{align*}
    \frac{J(\gamma_\eta) - J(\gamma)}{\eta} & = \iint_{Q_T} \varphi^\eta \varphi \overline{\gamma} \chi'_\gamma - \iint_{Q_T} \varphi^\eta \varphi \left( \frac{\chi_{\gamma_\eta} - \chi_\gamma}{\eta} + \overline{\gamma} \chi'_\gamma \right) \\
    & \to \iint_{Q_T} \varphi^2 \overline{\gamma} \chi'_\gamma = \int_0^T \overline{\gamma} \int_\Omega \varphi^2 \chi'_\gamma.
\end{align*}
Indeed, we can take the weak limit in the first term because $\varphi \overline{\gamma} \chi'_\gamma \in L^2(0,T; L^2(\Omega))$. Using Lemma~\ref{lem:chi_gmm_deriv} and the boundedness of $(\varphi^\eta)_{\eta > 0}$ in $L^2(0,T; L^2(\Omega))$, the second term converges to $0$ because
\begin{equation*}
    \left| \iint_{Q_T} \varphi^\eta \varphi \left( \frac{\chi_{\gamma_\eta} - \chi_\gamma}{\eta} + \overline{\gamma} \chi'_\gamma \right) \right| \leq \| \varphi^\eta \|_{L^2(L^2)} \| \varphi \|_{L^2(L^2)} \left\| \frac{\chi_{\gamma_\eta} - \chi_\gamma}{\eta} + \overline{\gamma} \chi'_\gamma \right\|_{L^\infty(Q_T)}.
\end{equation*}
\end{proof}

\begin{remark}
We emphasize that the directional derivative does not depend on the solution of an adjoint problem. This is due to the fact that we minimize with respect to the curve $\gamma$ over controls of minimal $L^2(q_{\gamma})$-norm. We refer to the proof of \cite[Theorem 2.3]{Munch08} for more details. 
\end{remark}

\subsection{Regularization and Gradient algorithm}
\label{ssect:grad_algo}

At the practical level, in order to solve the optimal problem~\eqref{eq:min_J} numerically, we need to handle the Lipschitz constraint included in $\mathcal{G}_\text{ad}$. In this respect, we add a regularizing term to the cost $J$ in order to keep the derivative of $\gamma$ uniformly bounded. The optimization problem is now the following one: for $\epsilon > 0$ fixed, solve
\begin{equation}\label{eq:min_Jeps}
    \min_{\substack{\gamma \in W^{1,\infty}(0,T)\\ \delta_0 \leq \gamma \leq 1 - \delta_0}} J_\epsilon(\gamma) = J(\gamma) + \frac{\epsilon}{2} \| \gamma' \|_{L^2(0,T)}^2.
\end{equation}
The regularization parameter $\epsilon$, which can be compared to the Lipschitz constant $M$ in the definition of $\mathcal{G}_\text{ad}$, controls the speed of variation of the curves $\gamma \in W^{1,\infty}(0,T)$.

We fix $\gamma \in W^{1,\infty}(0,T)$ such that $\delta_0 \leq \gamma \leq 1 - \delta_0$. Using Proposition~\ref{prop:dJ}, for any admissible perturbation $\overline{\gamma} \in W^{1,\infty}(0,T)$, a direct calculation provides the expression of the directional derivative of $J_{\epsilon}$
\begin{equation}\label{eq:dJeps}
    dJ_\epsilon(\gamma; \overline{\gamma}) = dJ(\gamma; \overline{\gamma}) + \epsilon \int_0^T \gamma' \overline{\gamma}' = \langle j_\gamma, \overline{\gamma} \rangle_{L^2(0,T)} + \epsilon \langle \gamma', \overline{\gamma}' \rangle_{L^2(0,T)},
\end{equation}
with
\begin{equation}\label{eq:j_gmm}
    j_\gamma(t) = \int_\Omega \varphi^2(x,t) \chi'_\gamma(x,t) \,dx, \quad \forall t \in [0,T].
\end{equation}
In the expression of $j_\gamma$, the function $\varphi$ is the solution of~\eqref{eq:adj_wave} associated to the minimum $(\varphi_0, \varphi_1)$ of $\mathcal{J}^\star_\gamma$. Consequently, a minimizing sequence $(\gamma_n)_{n \in \mathbb{N}}$ for $J_{\epsilon}$ is defined as follows:
\begin{equation}
    \left\{
    \begin{array}{l}
    \gamma_0 \text{ given in } H^1(0,T), \\
    \gamma_{n+1} = P_{[\delta_0, 1 - \delta_0]}(\gamma_n - \rho j^{\epsilon}_{\gamma_n}), \quad \text{for } n \geq 0,
    \end{array}
    \right.
\end{equation}
where $P_{[\delta_0, 1 - \delta_0]}$ is the pointwise projection in the interval $[\delta_0, 1 - \delta_0]$, $\rho>0$ a descent step and $j^{\epsilon}_{\gamma_n}\in H^1(0,T)$ is the solution of the variational formulation
\begin{equation}\label{eq:jreg_var}
    \langle j^{\epsilon}_{\gamma_n}, \widetilde{\gamma} \rangle_{L^2(0,T)} + \epsilon \langle j^{\epsilon\prime}_{\gamma_n}, \widetilde{\gamma}' \rangle_{L^2(0,T)} = \langle j_{\gamma_n}, \widetilde{\gamma} \rangle_{L^2(0,T)} + \epsilon \langle \gamma^\prime_n, \widetilde{\gamma}' \rangle_{L^2(0,T)}, \quad \forall \widetilde{\gamma} \in H^1(0,T),
\end{equation}
so that $dJ_\epsilon(\gamma_n; j^{\epsilon}_{\gamma_n}) = \| j^{\epsilon}_{\gamma_n} \|_{L^2(0,T)}^2 + \epsilon \| j^{\epsilon '}_{\gamma_n} \|_{L^2(0,T)}^2 \geq 0$.

\section{Numerical experiments}
\label{sect:num_simu}

Before to present some numerical experiments, let us briefly mention some aspects of the resolution of the underlying discretized problem.
\vspace{0.2cm}
\par\noindent
$\bullet$ The discretization of the curve $\gamma$ is performed as follows. For any fixed integer $\mathcal{N} > 0$, we denote $\delta t = T/\mathcal{N}$ and define the uniform subdivision $\{t_i\}_{i=0,\cdots, \mathcal{N}}$ of $[0,T]$ such that $t_i= i \delta t$. We then approximate the curve $\gamma$ in the space of dimension $\mathcal{N}+1$
\begin{equation*}
    \mathbbm{P}_1^{\delta t} = \big\{ \gamma \in C([0,T]); \quad \gamma |_{[t_{i-1}, t_i]} \text{ affine},\ \forall i \in \{ 1,\dots,\mathcal{N} \} \big\}.
\end{equation*}
For any $\gamma \in \mathbbm{P}_1^{\delta t}$, $\gamma = \sum_{i=0}^\mathcal{N} \gamma^i L_i^{\delta t}$, with $(\gamma^i)_{0 \leq i \leq \mathcal{N}} \in \Omega^{\mathcal{N}+1}$ where $(L_i^{\delta t})_{0 \leq i \leq \mathcal{N}}$ is the usual Lagrange basis. Consequently, $\gamma$ is defined by the $\mathcal{N}+1$ points $(\gamma^i,t_i)\in \Omega \times [0,T]$. The knowledge of the initial curve $\gamma_0 \in \mathbb{P}_1^{\delta t}$ such that $\delta_0 \leq \gamma_0 \leq 1 - \delta_0$ determines such points and then a triangular mesh of $Q_T$. At each iteration $n\geq 0$, these points are updated along the $x$-axis according to the pointwise time descent direction $j^{\epsilon}_{\gamma_n}\in H^1(0,T)$ (see \eqref{eq:jreg_var}) as follows:
\[
x_i^{n+1}=P_{[\delta_0,1-\delta_0]}\biggl(x_i^n-\rho \, j^{\epsilon}_{\gamma_n}(t_i^n)\biggr), \qquad t_i^{n+1}=t_i^n  \quad \forall i=0,\cdots, \mathcal{N}+1.
\]

We emphasize in particular that a remeshing of $Q_T$ is performed at each iteration $n$ according to the set of points $(x_i^{n},t_i^n)_{i=0,\cdots,\mathcal{N}+1}$.
\vspace{0.2cm}
\par\noindent
$\bullet$ Each iteration of the algorithm requires the numerical approximation of the control of minimal $L^2(q_{\gamma_n})$ for the initial data $(y_0,y_1)$. We use the space-time method described in~\cite[Sections~3-4]{CastroCindeaMunch14} which is very well-adapted to the description of $\gamma$ embedded in a space-time mesh of $Q_T$. The minimization of the conjugate functional $\mathcal{J}^\star_{\gamma_n}$ (see \eqref{eq:J*_gmm}) with respect to $(\varphi_0,\varphi_1)\in \mathbf{V}$ is replaced by the search of the unique saddle-point of the Lagrangian $\mathcal{L}:\mathbf{Z}\times L^2(0,T; H^1_0(\Omega))\to \mathbb{R}$ defined by
\[
\mathcal{L}(\varphi,y)=\frac{1}{2}\Vert \varphi \Vert^2_{L^2(q_{\gamma_n})}-\int_0^T \langle y,L \varphi \rangle_{H_0^1(\Omega),H^{-1}(\Omega)} dt
+ \langle \varphi_t(\cdot,0), y_0 \rangle_{H^{-1}(\Omega),H_0^1(\Omega)} - \langle \varphi(\cdot,0),y_1 \rangle_{L^2(\Omega)},
\]
with $\mathbf{Z}=C^1([0,T];H^{-1}(\Omega))\cap C([0,T];L^2(\Omega))$. The corresponding mixed formulation is solved with a conformal space-time finite element method while a direct method is used to invert the discrete matrix. The interesting feature of the method for which the adaptation of the mesh is very simple to handle with, is that only a small part of the matrix - corresponding to the term $\Vert \varphi\Vert^2_{L^2(q_{\gamma_n})}$ - is modified from two consecutive iterations $n$ and $n+1$.

\subsection{Numerical illustrations}
\label{ssect:num_simu}

We discuss several experiments performed with FreeFEM (see~\cite{Hecht2012}) for various initial data and control domains. We notably use an UMFPACK type solver. 

We fix $\delta_0 = 0.15$ and $\delta = \delta_0/4$. Moreover, according to~\eqref{eq:chi}, we define the function $\chi\in C^2(\mathbb{R})$ in $[\delta_0 - \delta, \delta_0]$ as the unique polynomial of degree $5$ such that $\chi(\delta_0 - \delta) = 1$, $\chi(\delta_0)=\chi'(\delta_0 - \delta) = \chi'(\delta_0) =\chi''(\delta_0 - \delta) = \chi''(\delta_0) = 0$ and vanishing on $\mathbb{R}\setminus [\delta_0-\delta,\delta_0]$. 

Concerning the stopping criterion for the descent algorithm, we observed that the usual one based on the relative quantity $|J_\epsilon^n - J_\epsilon^{n-1}|/J_\epsilon^0$ is inefficient because too noisy. 
This is due to the  uncertainty on the numerical computation of the adjoint state $\varphi^n$ and the perturbation $j^{\epsilon}_{\gamma_n}$. Consequently, in order to better capture the variations of the sequence $(J_\epsilon^n)_{n \in \mathbb{N}}$, we replace $J_\epsilon^n$ and $J_\epsilon^{n-1}$ by the right and left $p$-point average respectively leading to the stopping criterion
\begin{equation}\label{eq:stop_crit}
    \Delta J_\epsilon^n \coloneqq \left| \frac{\frac{1}{p} \sum_{i=n}^{n+p-1} J_\epsilon^i - \frac{1}{p} \sum_{i=n-p}^{n-1} J_\epsilon^i}{J_\epsilon^0} \right| < \eta, \quad \text{for } p \in \mathbb{N}^* \text{ fixed}.
\end{equation}
In the sequel, we fix $p = 10$ and $\eta = 10^{-3}$. 

Furthermore, in order to measure the gain obtained by using non-cylindrical control domains rather than cylindrical ones, we define a performance index associated to each optimal curves $\gamma_\text{opt}$;  identifying any constant curve $\gamma \equiv x_0$ with its value $x_0 \in [\delta_0, 1 - \delta_0]$, we compute the minimal cost $\min_{x_0} J_\epsilon(x_0)$ for cylindrical domains. We then define the performance index of $\gamma_\text{opt}$ by
\begin{equation}\label{perf_ind}
    \Pi(\gamma_\text{opt}) = 100 \left( 1 - \frac{J_\epsilon(\gamma_\text{opt})}{\min_{x_0} J_\epsilon(x_0)} \right).
\end{equation}

In the sequel, in practice, the minimum of $J_\epsilon$ with respect to $x_0$ is searched among $13$ distincts values equi-distributed between $0.2$ and $0.8$.

\vspace{0.2cm}
\par\noindent
$\bullet$ We first consider the regular initial datum $(y_0,y_1)$ given by
\begin{equation}\label{eq:EX1}\tag{\bf EX1}
    y_0(x) = \sin(2 \pi x), \quad y_1(x) = 0, \qquad \text{for } x \in (0,1).
\end{equation}
and  $T = 2$,  $\epsilon = 10^{-2}$, $\rho=10^{-4}$.  We initialize the descent algorithm with the following three initial curves:
\begin{equation}\label{eq:EX1_gmm0}
    \gamma_0^1(t) = \frac{2}{5}, \quad \gamma_0^2(t) = \frac{3}{5}, \quad \gamma_0^3(t) = \frac{1}{2} + \frac{1}{10} \cos \left( \pi \frac{t}{T} \right), \qquad \text{for } t \in (0,T).
\end{equation}
The corresponding initial and optimal domains are depicted in Figure~\ref{fig:simu_EX1_gmm} together with typical space-time meshes. The numbers of iterations until convergence, the values of the functional $J_\varepsilon$ evaluated at the optimal curve $\gamma_\text{opt}$ and the performance indices of $\gamma_\text{opt}$ are listed in Table~\ref{tab:simu_EX1}.

\begin{table}[ht!]
\centering
\begin{tabular}{c|ccc}
Initial curve & $\gamma_0^1$ & $\gamma_0^2$ & $\gamma_0^3$ \\
\hline
Number of iterations & $33$ & $33$ & $84$ \\
$J_\epsilon(\gamma_\text{opt})$ & $47.09$ & $47.09$ & $47.93$ \\
$\Pi(\gamma_\text{opt})$ & $-0.32$\% & $-0.32$\% & $-2.11$\%
\end{tabular}
\caption{{\bf(EX1)} - Number of iterations, optimal value of the functional $J_\varepsilon$ and performance index, for the initial curves $(\gamma_0^i)_{i \in \{ 1,2,3 \}}$ given by~\eqref{eq:EX1_gmm0}.}
\label{tab:simu_EX1}
\end{table}

In Figure~\ref{fig:simu_EX1_gmm}, we observe that the optimal domain computed by the algorithm depends on the initial domain chosen. This indicates that our functional $J_\varepsilon$ does have several local minima. Moreover, one can show that, among the cylindrical domains, there are two optimal values, $x_0 = 1/4$ and $x_0 = 3/4$, leading to $J_{\varepsilon}(x_0)\approx 46.94$. These values correspond to the extrema of the function $\sin(2 \pi x)$ in $[0,1]$. The simulations associated with the initial curves $\gamma_0^1$ and $\gamma_0^2$ are in agreement with this result. On the other hand, the worst cylindrical domain corresponds to $x_0 = 1/2$ (see Figure~\ref{fig:simu_EX12_JCyl}-Left).

\begin{figure}[ht!]
\centering
\begin{tabular}{ccc}
\includegraphics[width=0.3\textwidth]{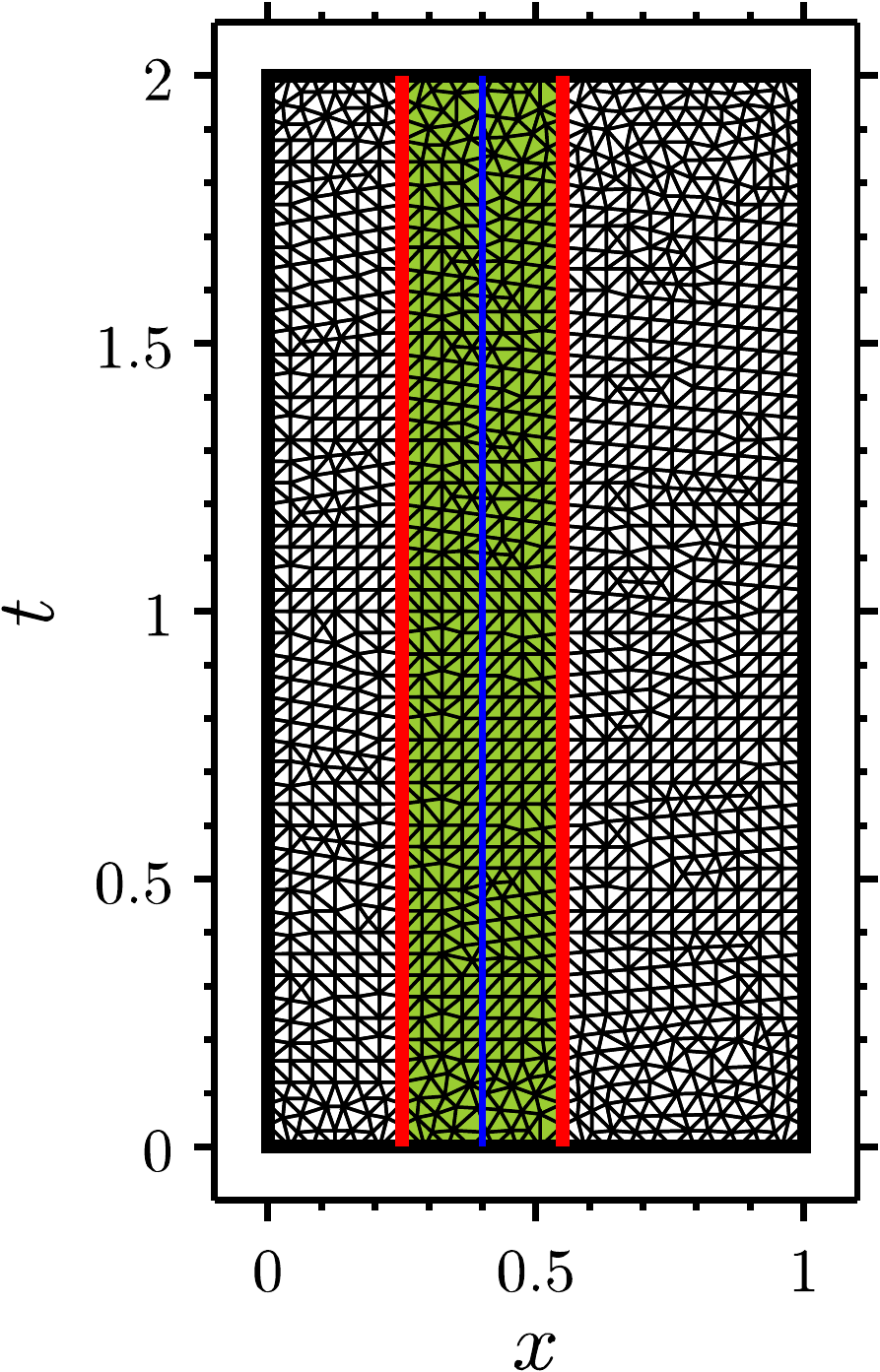} &
\includegraphics[width=0.3\textwidth]{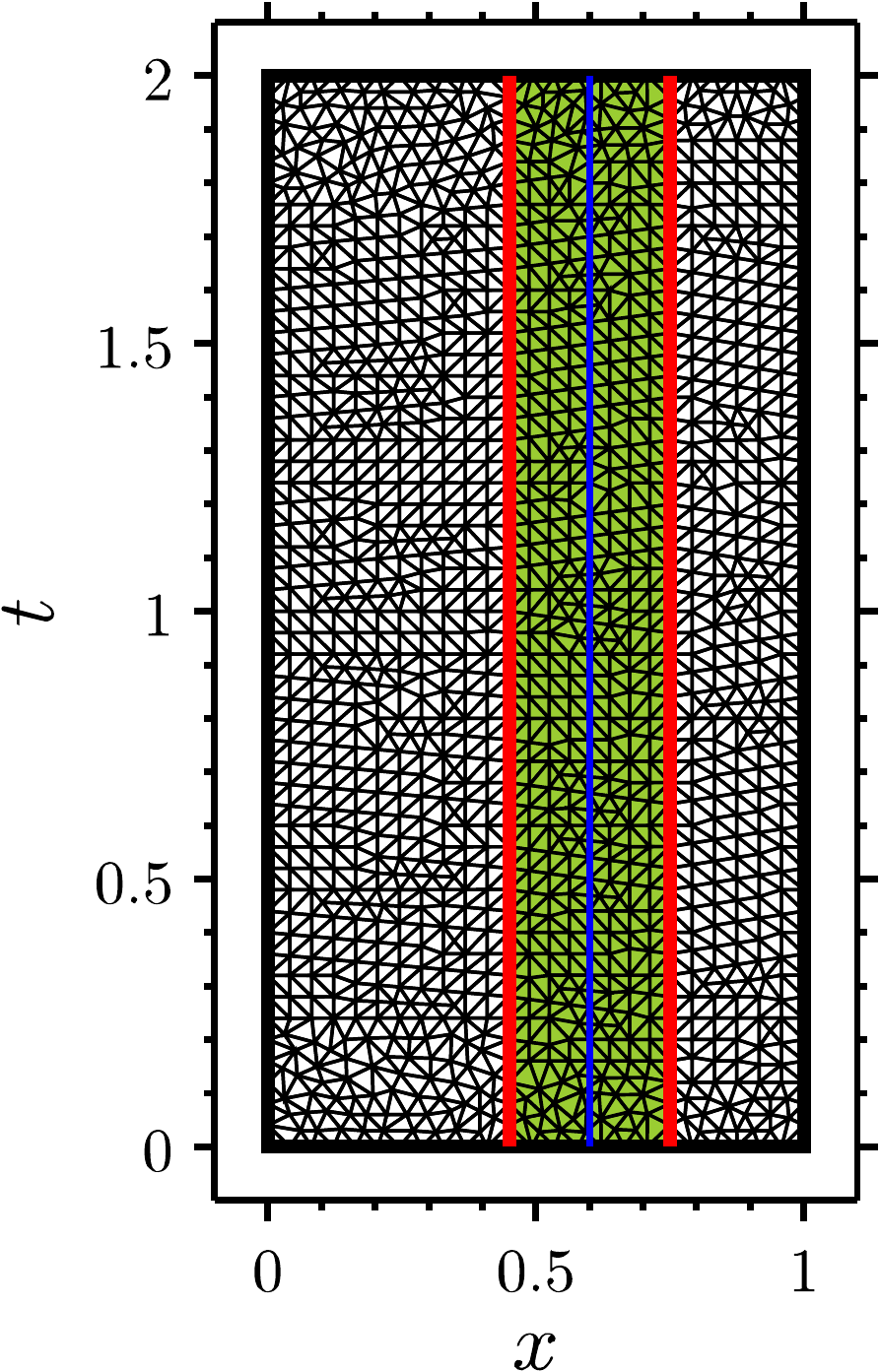} &
\includegraphics[width=0.3\textwidth]{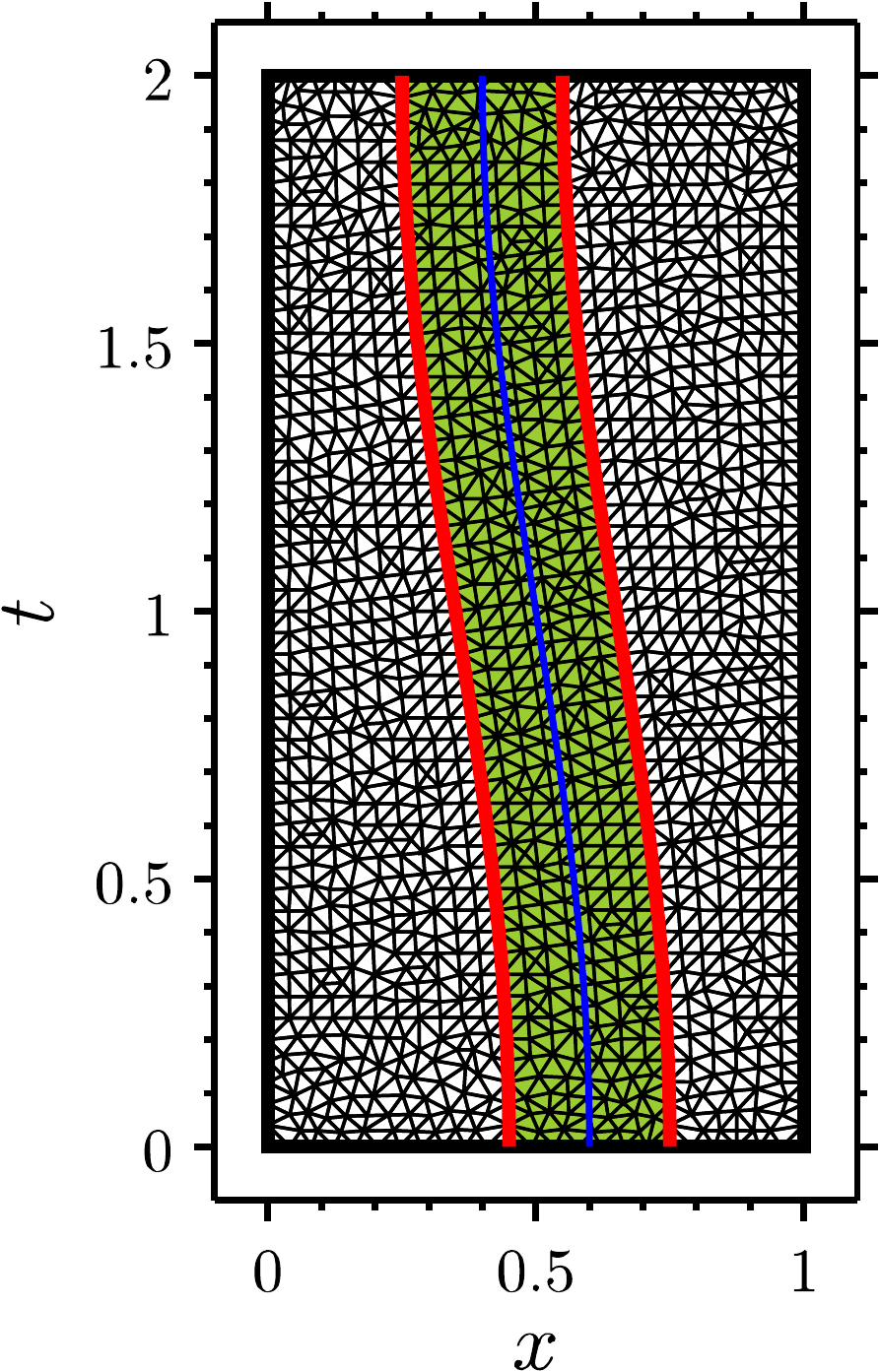}
\tabularnewline
\includegraphics[width=0.3\textwidth]{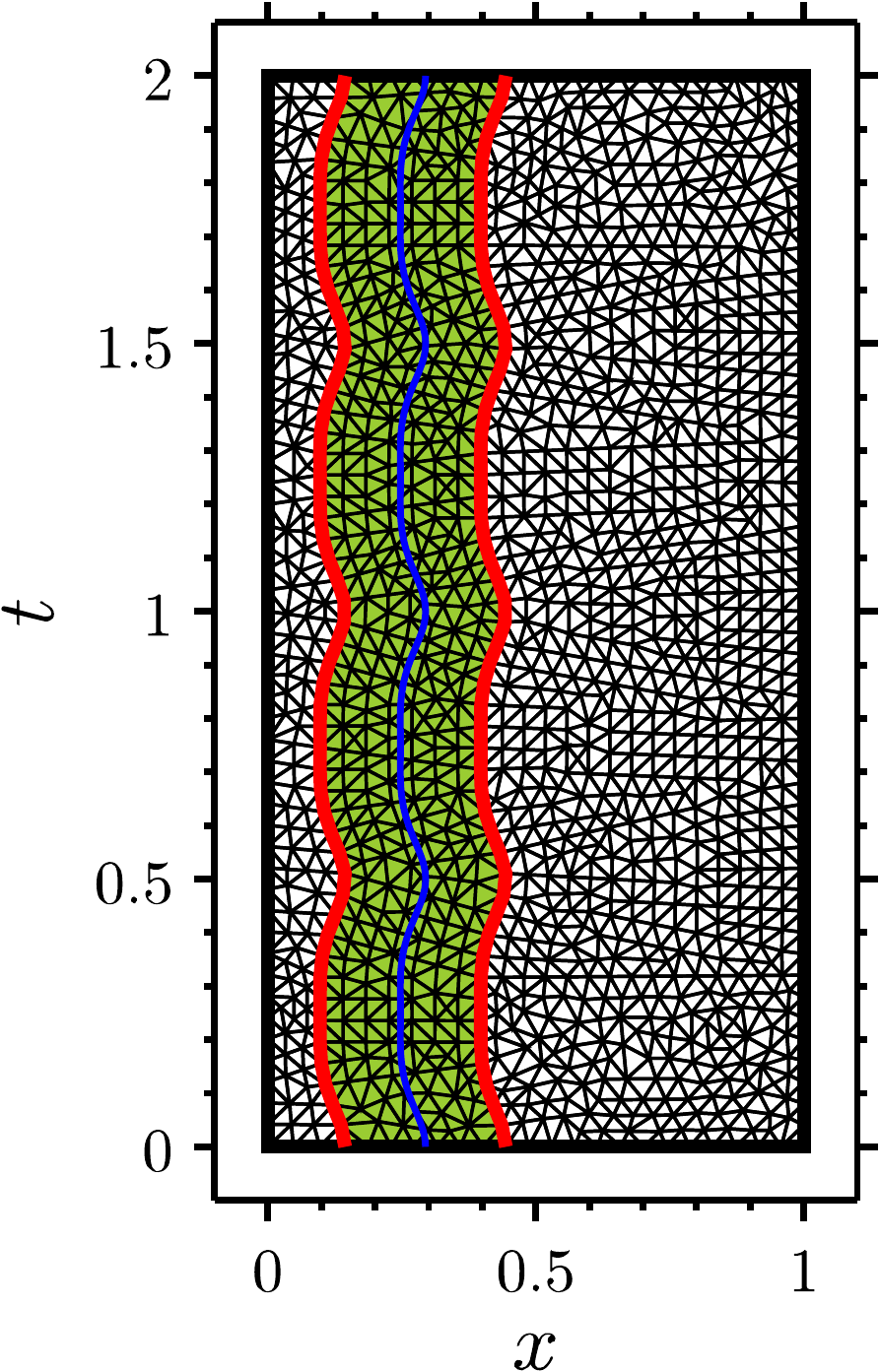} &
\includegraphics[width=0.3\textwidth]{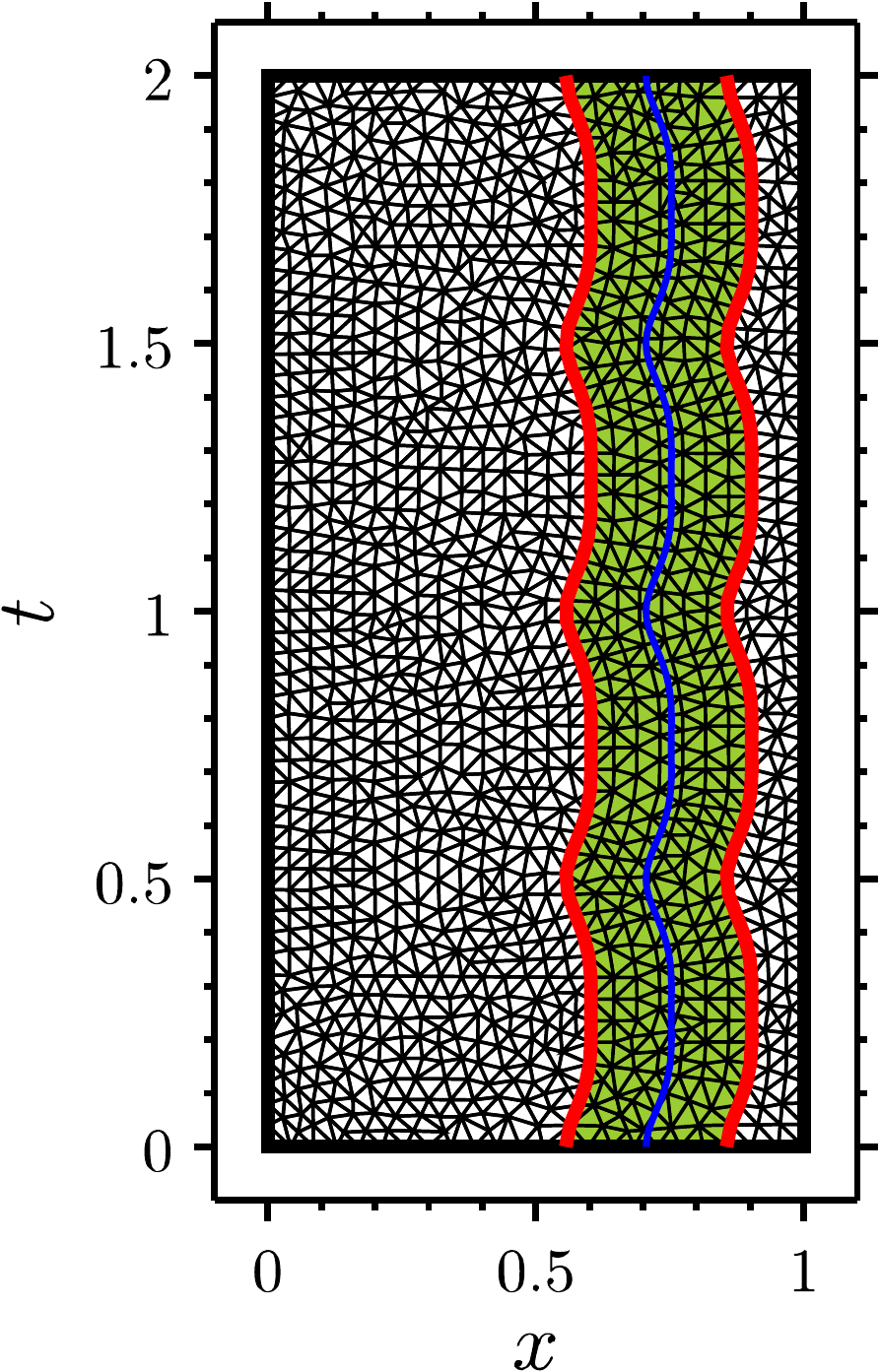} &
\includegraphics[width=0.3\textwidth]{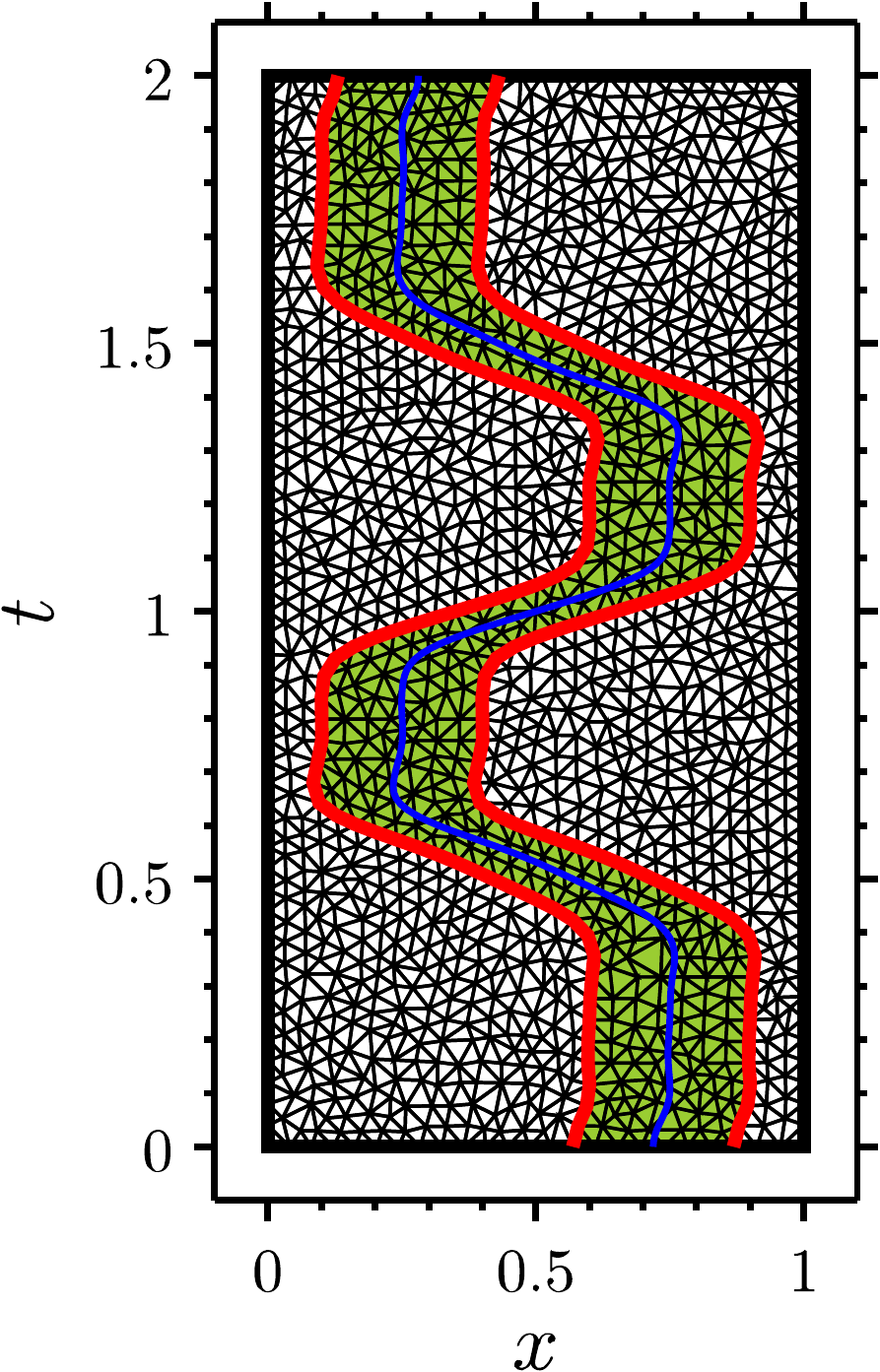}
\end{tabular}
\caption{{\bf (EX1)} - Initial (top) and optimal (bottom) control domains for the initial curves $(\gamma_0^i)_{i \in \{ 1,2,3 \}}$ given by~\eqref{eq:EX1_gmm0} (from left to right).}
\label{fig:simu_EX1_gmm}
\end{figure}

Eventually, the adjoint states $\varphi$ (from which we obtain the control  $v = \varphi |_{q_\gamma^n}$) computed for the optimal domains in Figure~\ref{fig:simu_EX1_gmm}-Bottom, are displayed in Figure~\ref{fig:simu_EX1_phi}.

\begin{figure}[ht!]
\centering
\begin{tabular}{ccc}
\includegraphics[width=0.3\textwidth]{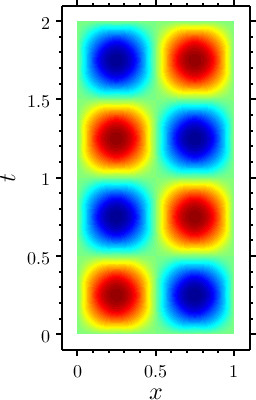} &
\includegraphics[width=0.3\textwidth]{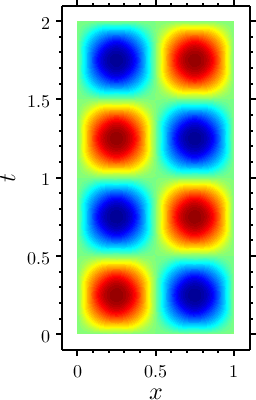} &
\includegraphics[width=0.3\textwidth]{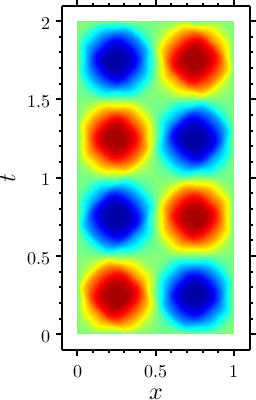}
\end{tabular}
\caption{{\bf (EX1)} -  Isovalues of the adjoint states $\varphi$ computed for the optimal domains obtained for the initial curves $(\gamma_0^i)_{i \in \{ 1,2,3 \}}$ given by~\eqref{eq:EX1_gmm0} (from left to right).}
\label{fig:simu_EX1_phi}
\end{figure}

\begin{figure}[ht!]
\centering
\begin{tabular}{cc}
\includegraphics[width=0.45\textwidth]{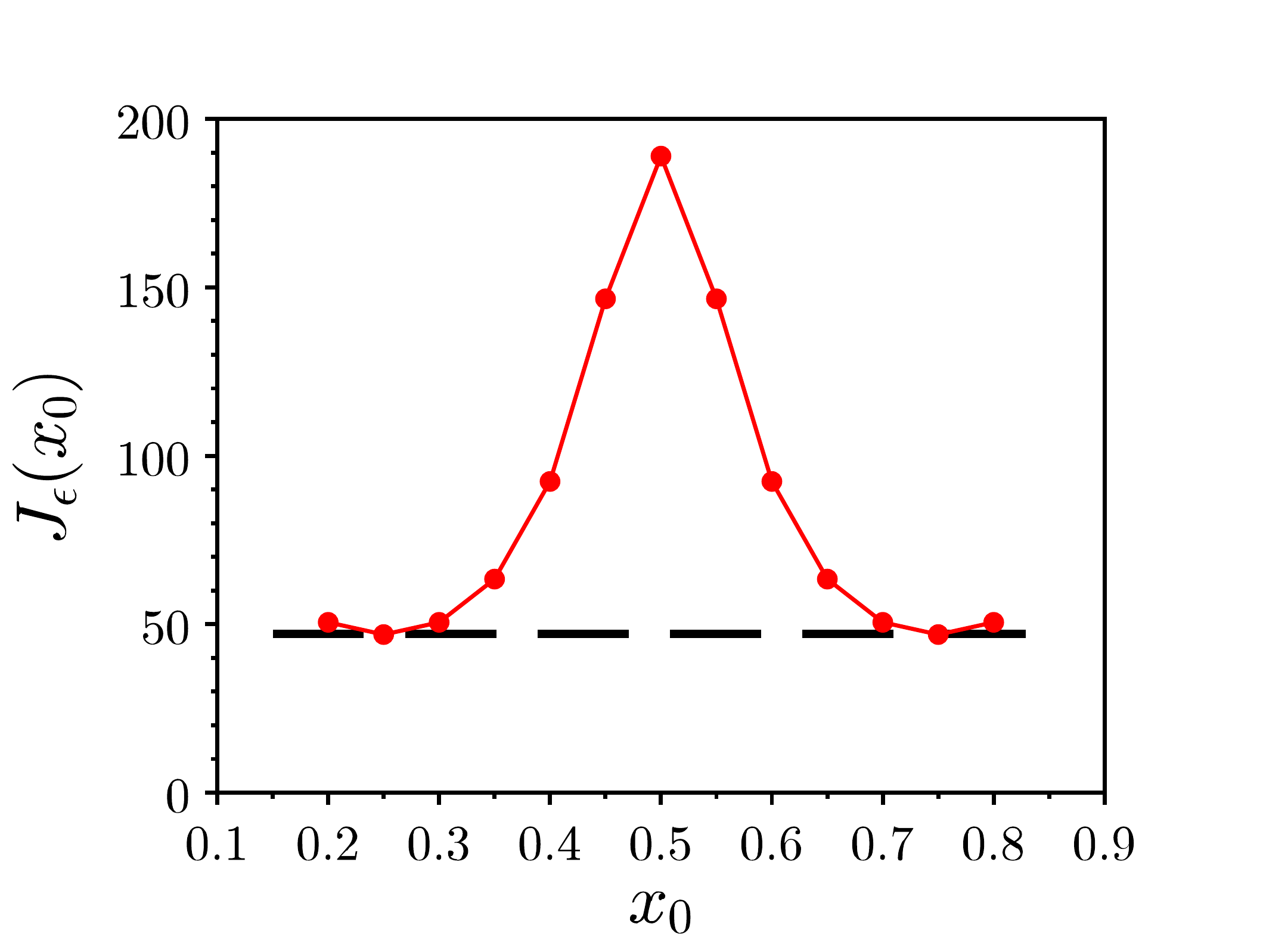} & \includegraphics[width=0.45\textwidth]{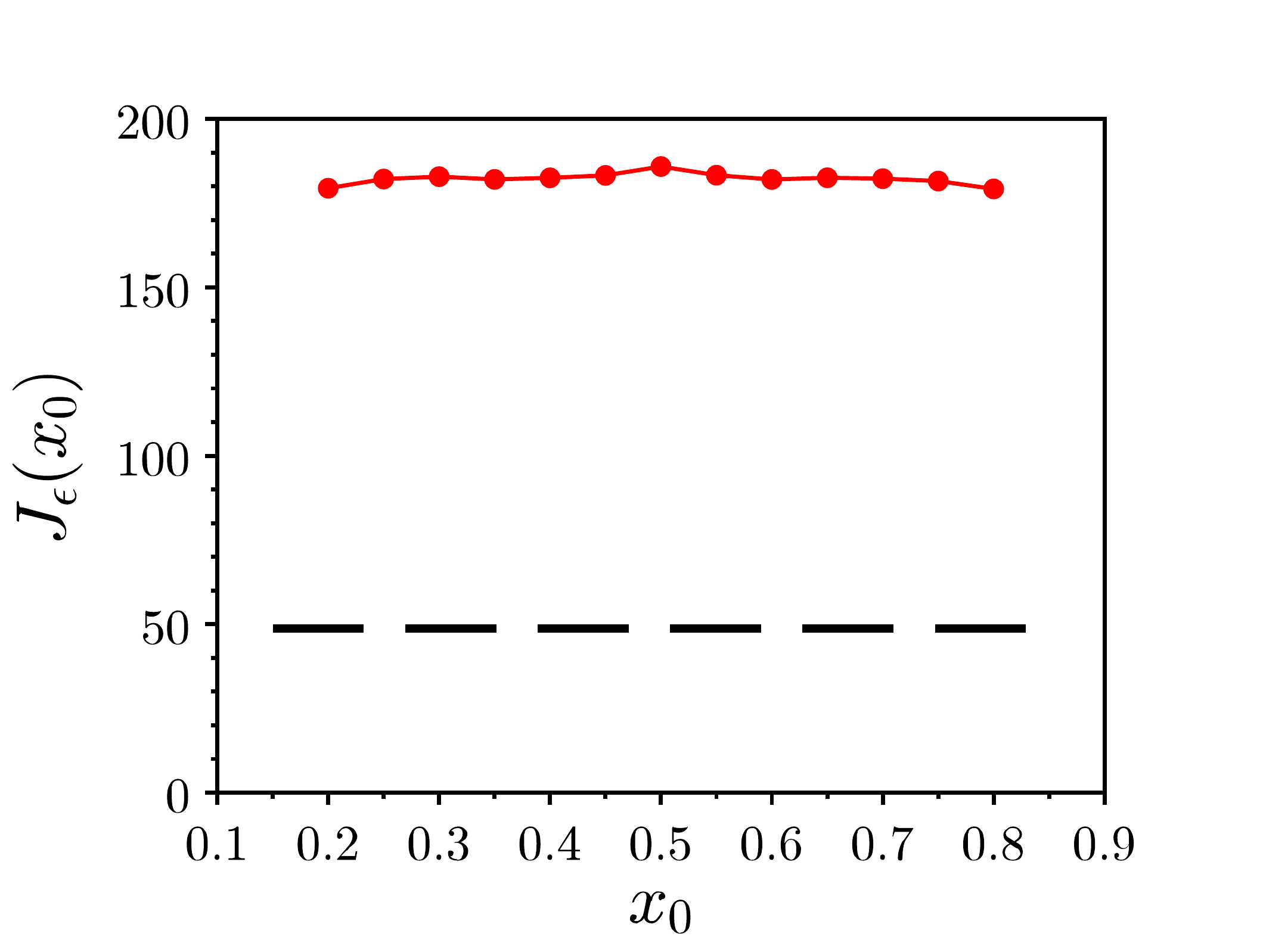}
\end{tabular}
\caption{Values of $J_\epsilon$ for constant curves $\gamma \equiv x_0$ ({\color{red} \textbullet}), for the initial data~\eqref{eq:EX1} (left) and~\eqref{eq:EX2} (right). The dashed line ({\color{black} \textbf{- -}}) represents the value of $J_\epsilon(\gamma_\text{opt})$, for the initial curves $\gamma_0 \equiv 2/5$ (left) and $\gamma_0 \equiv 1/2$ (right).}
\label{fig:simu_EX12_JCyl}
\end{figure}

\vspace{0.2cm}
\par\noindent
$\bullet$ We now consider the initial datum $(y_0,y_1)$ given by
\begin{equation}\label{eq:EX2}\tag{\bf EX2}
    y_0(x) = (10 x - 4)^2 (10 x - 6)^2 \mathbbm{1}_{[0.4, 0.6]}(x), \quad y_1(x) = y_0'(x), \qquad \text{for } x \in (0,1).
\end{equation}
This initial condition, plotted in Figure~\ref{fig:simu_EX2_y0}, generates a travelling wave, as can be seen in Figure~\ref{fig:simu_EX2_gmm}.3.

\begin{figure}[ht!]
\centering
\begin{tabular}{cc}
\includegraphics[width=0.45\textwidth]{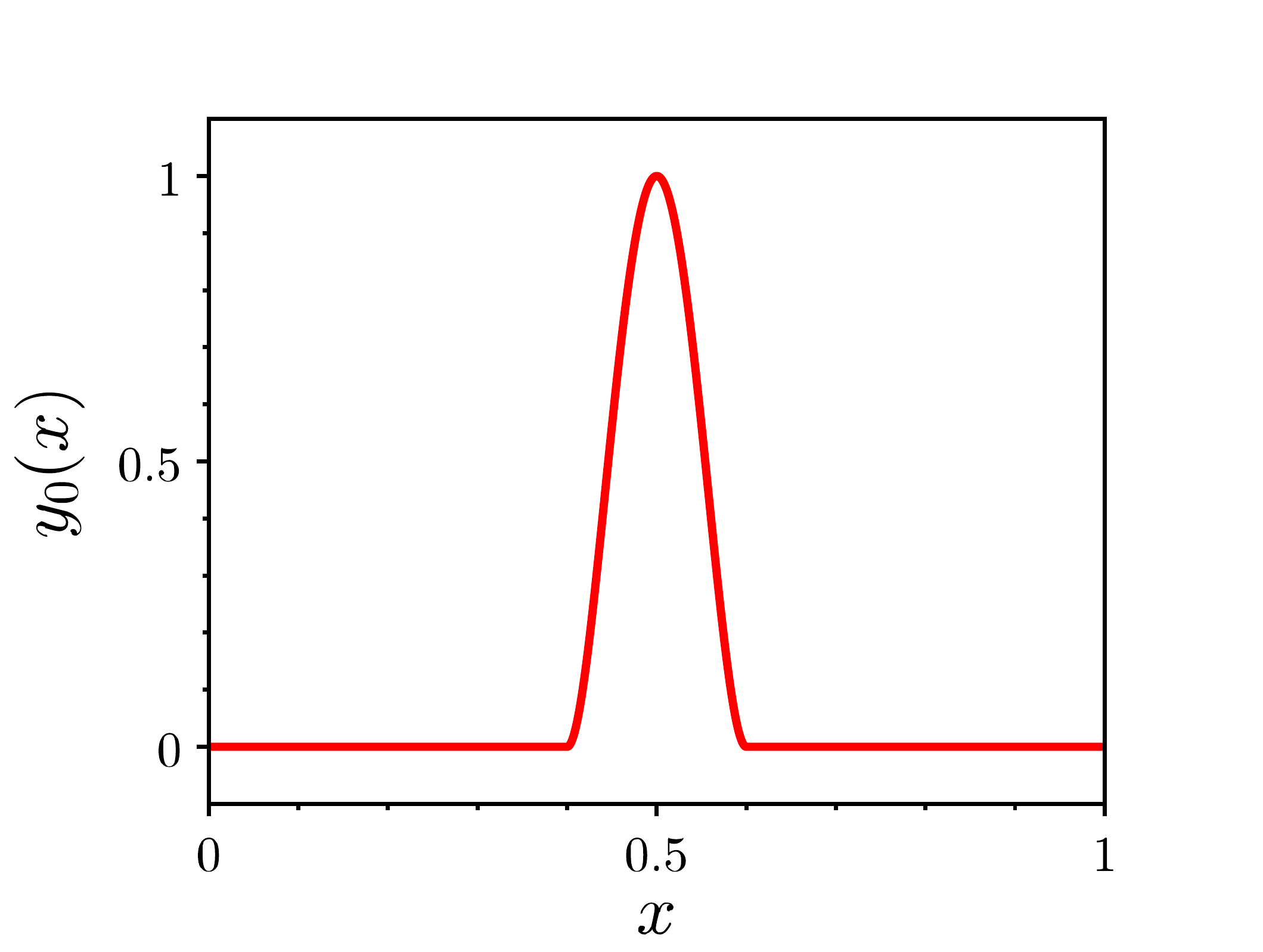} &
\includegraphics[width=0.45\textwidth]{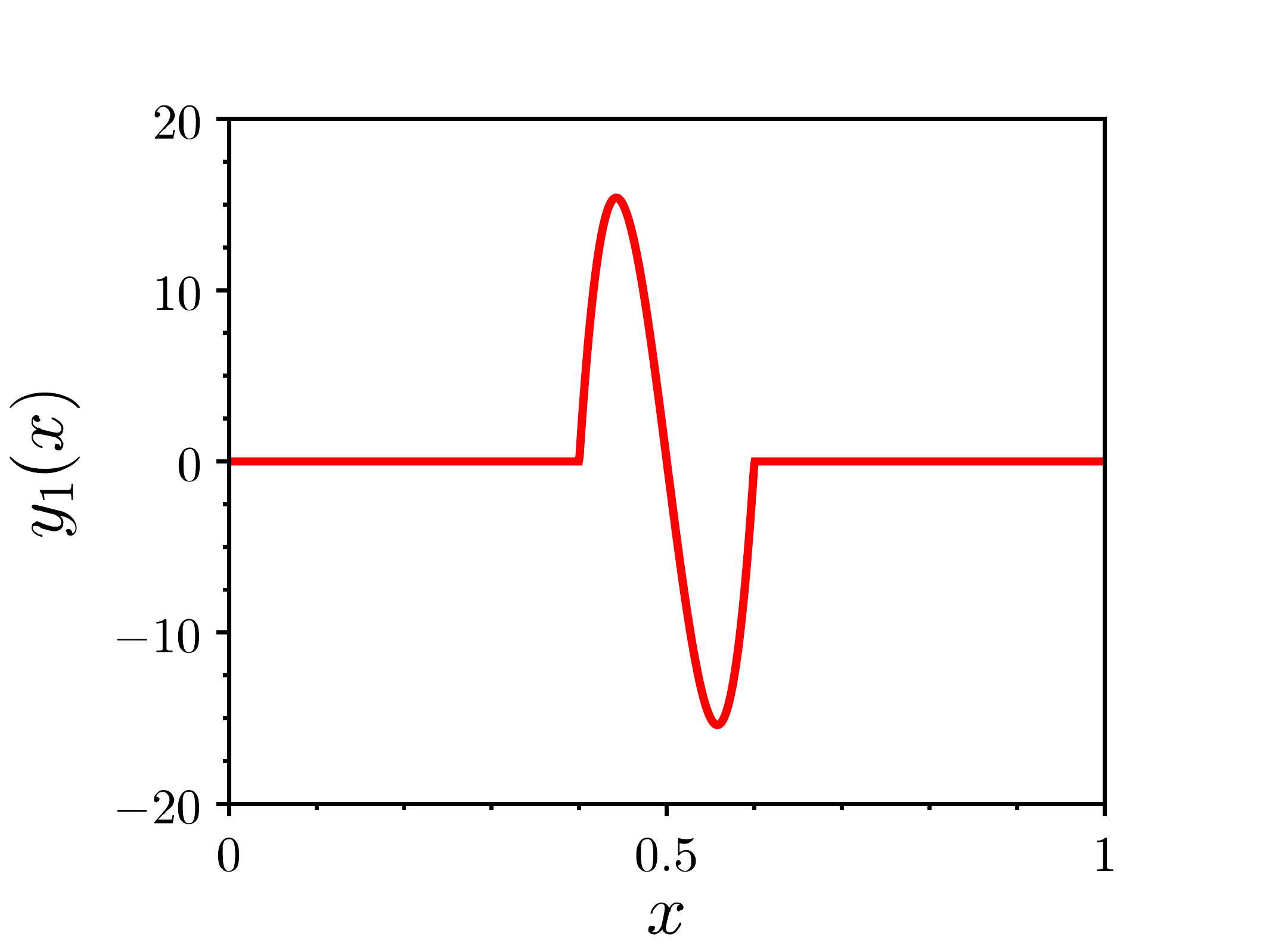}
\end{tabular}
\caption{Initial datum $(y_0,y_1)$ defined in~\eqref{eq:EX2}.}
\label{fig:simu_EX2_y0}
\end{figure}

For $T = 2$, $\epsilon = 10^{-2}$ and $\rho=10^{-4}$, we initialize the descent algorithm with the curve $\gamma_0 \equiv 1/2$. The convergence is reached after $68$ iterations and the optimal cost is $J_\epsilon(\gamma_\text{opt}) \approx 48.70$. Moreover, the minimal cost for cylindrical domains is $\min_{x_0} J_\epsilon(x_0) \approx 179.22$ leading to a performance index $\Pi(\gamma_\text{opt}) \approx 72.83$\%. The non-cylindrical setup is in that case much more efficient that the cylindrical one. It is due to the fact that the domains we consider can follow very closely the propagation of the travelling wave. This can be noticed in Figure~\ref{fig:simu_EX2_gmm}, where we display the optimal control domain, the corresponding adjoint state $\varphi$, the uncontrolled and controlled solutions over the optimal domain.

\begin{figure}[ht!]
\hspace{-0.5cm}
\centering
\begin{tabular}{cccc}
\includegraphics[width=0.23\textwidth]{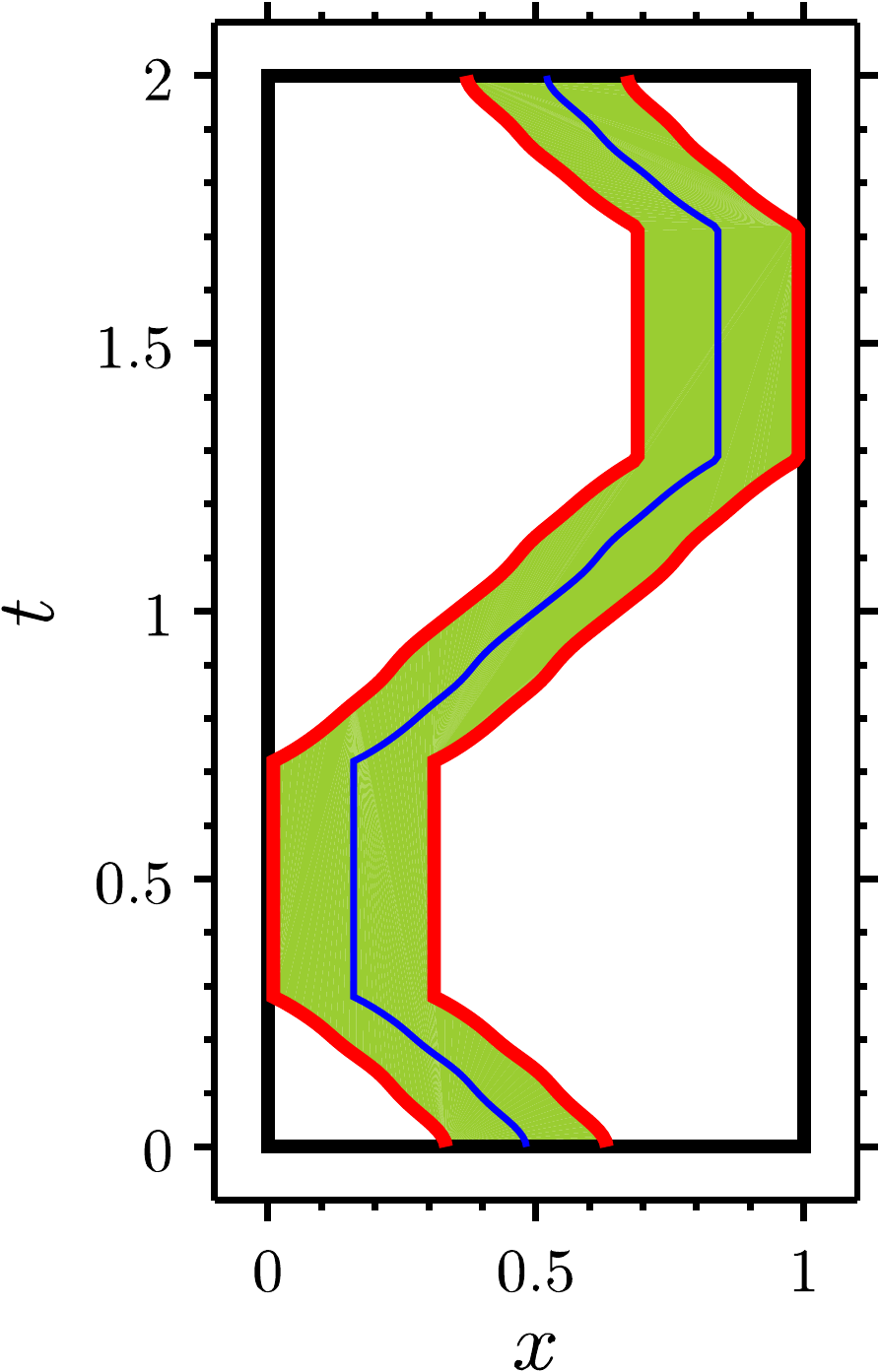} &
\includegraphics[width=0.23\textwidth]{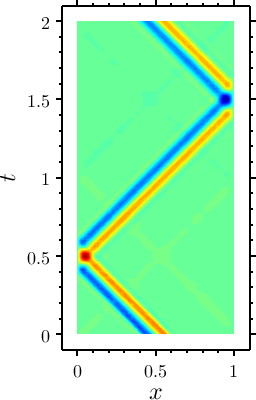} &
\includegraphics[width=0.23\textwidth]{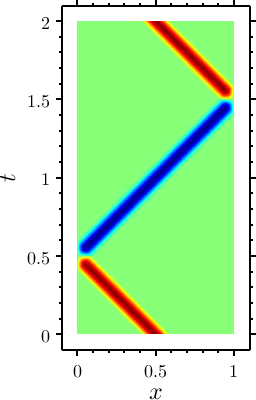} &
\includegraphics[width=0.23\textwidth]{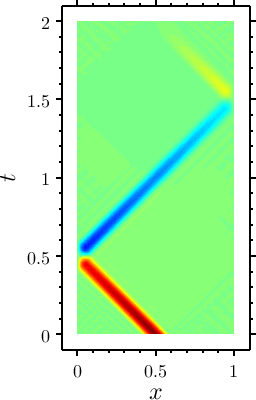}
\end{tabular}
\caption{{\bf (EX2)} -  From left to right, optimal control domain, isovalues of the corresponding adjoint state $\varphi$, isovalues of the uncontrolled and controlled wave over the optimal domain, for the initial curve $\gamma_0 \equiv 1/2$.}
\label{fig:simu_EX2_gmm}
\end{figure}

The evolution of the cost $J_\epsilon^n$ and the derivative $dJ_\epsilon^n$ with respect to $n$ are displayed in Figure~\ref{fig:simu_EX2_J}. Figure~\ref{fig:simu_EX12_JCyl}-Right depicts the values of the functional $J_\epsilon$ for the constant curves $\gamma \equiv x_0$ used to determine the best cylindrical domain and highlights the low variation of the cost with respect to the position of such domains.

\begin{figure}[ht!]
\centering
\begin{tabular}{cc}
\includegraphics[width=0.48\textwidth]{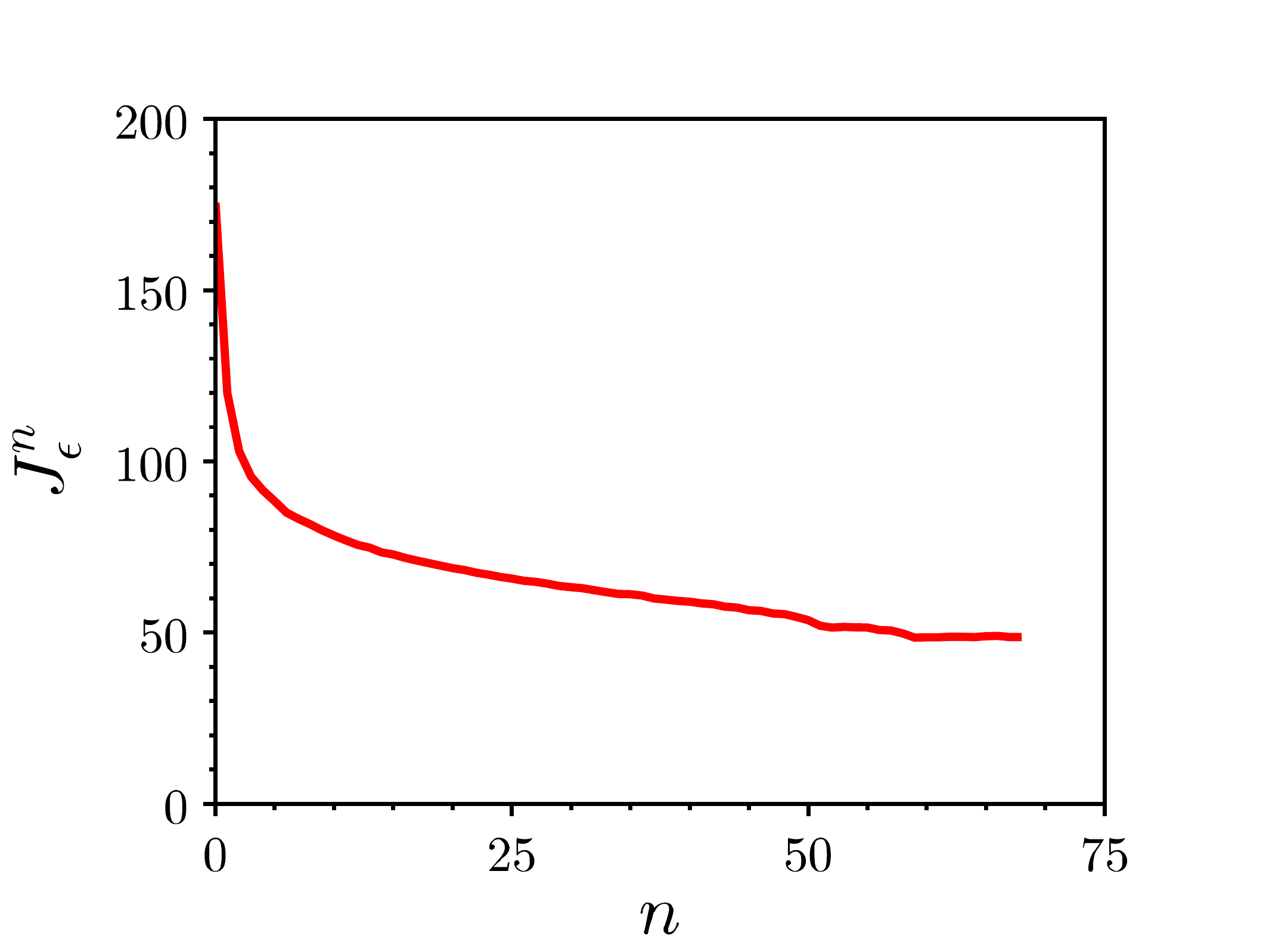} &
\includegraphics[width=0.48\textwidth]{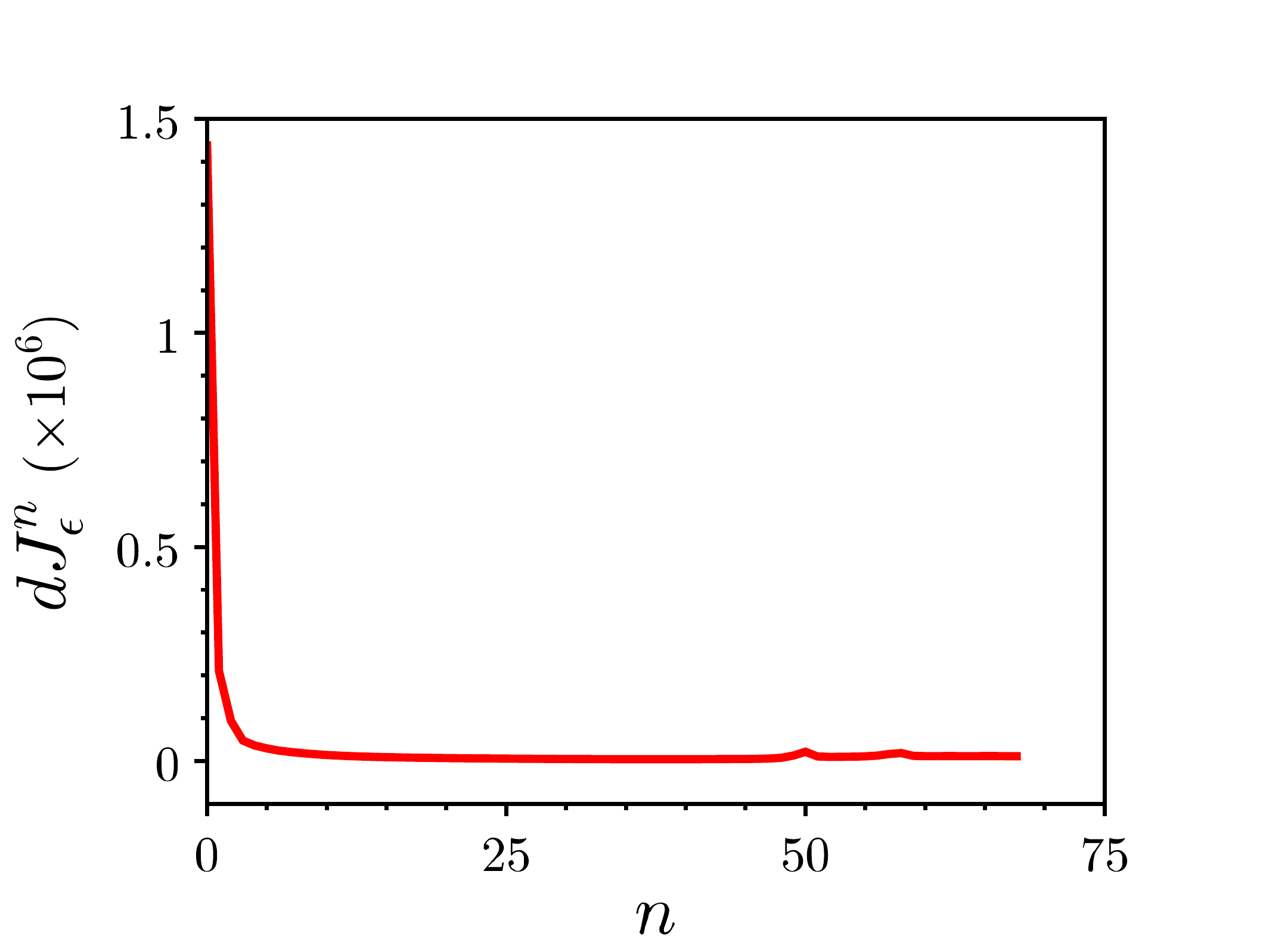}
\end{tabular}
\caption{{\bf (EX2)} -  Evolution of the cost $J_\epsilon^n$ (left) and the derivative $dJ_\epsilon^n$ (right) for the initial curve $\gamma_0 \equiv 1/2$.}
\label{fig:simu_EX2_J}
\end{figure}

\vspace{0.2cm}
\par\noindent
$\bullet$
We now consider the initial datum $(y_0,y_1)$ given by
\begin{equation}\label{eq:EX3}\tag{\bf EX3}
    y_0(x) = (10 x - 4)^2 (10 x - 6)^2 \mathbbm{1}_{[0.4, 0.6]}(x), \quad y_1(x) = 0, \qquad \text{for } x \in (0,1).
\end{equation}
This initial condition generates two travelling waves going in opposite directions, as can be seen in Figure~\ref{fig:simu_EX3_gmm}.3. For $T = 2$, $\epsilon = 10^{-2}$ and $\rho=10^{-4}$, we initialize the algorithm with the initial curve $\gamma_0 \equiv 1/2$. The convergence is observed after $111$ iterations leading to $J_\epsilon(\gamma_\text{opt}) \approx 41.02$. Moreover, the minimal cost for cylindrical domains is $\min_{x_0} J_\epsilon(x_0) \approx 85.08$, so that the performance index is $\Pi(\gamma_\text{opt}) \approx 51.79$\%. Once again, our non-cylindrical setup is much more efficient than the cylindrical one. It is still due to the fact that the domains we consider can follow the propagation of the travelling waves, one after the other. This can be noticed in Figure~\ref{fig:simu_EX3_gmm}, where we display the optimal control domain, the corresponding adjoint state $\varphi$, the uncontrolled and the controlled wave over the optimal domain.

\begin{figure}[ht!]
\hspace{-0.5cm}
\centering
\begin{tabular}{cccc}
\includegraphics[width=0.23\textwidth]{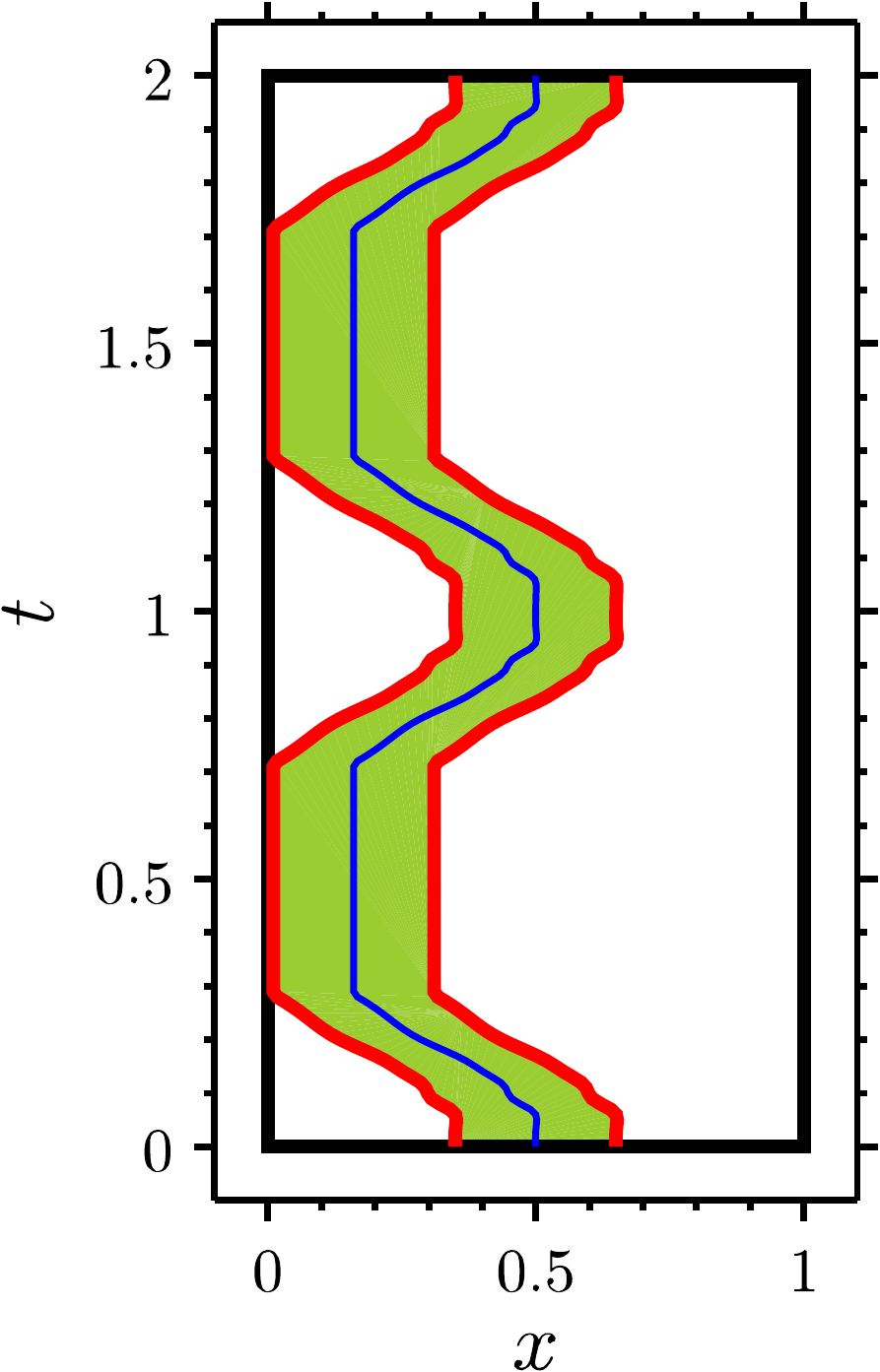} &
\includegraphics[width=0.23\textwidth]{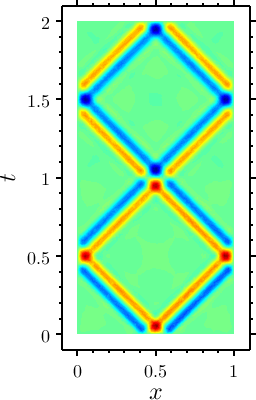} &
\includegraphics[width=0.23\textwidth]{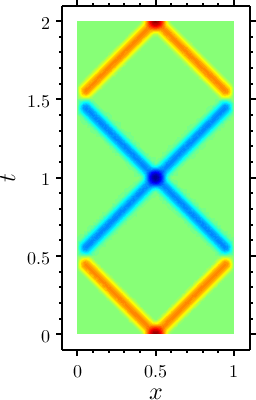} &
\includegraphics[width=0.23\textwidth]{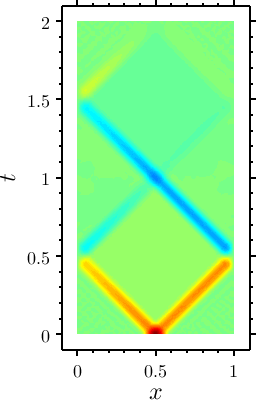}
\end{tabular}
\caption{{\bf (EX3)} -  From left to right, optimal control domain, isovalues of the corresponding adjoint state $\varphi$, isovalues of the uncontrolled and controlled wave over the optimal domain, for $T = 2$, for the initial curve $\gamma_0 \equiv 1/2$.}
\label{fig:simu_EX3_gmm}
\end{figure}

In order to show the influence of the controllability time on the optimal domain, for $\epsilon = 10^{-2}$ and $\gamma_0 \equiv 1/2$, we use the descent algorithm with $T = 1$ and $\rho=2.5\times 10^{-5}$, initialized with the curve $\gamma_0=1/2$. Remark that the corresponding domain do satisfies the geometric optic condition. The convergence is observed after $213$ iterations and the optimal cost is $J_\epsilon(\gamma_\text{opt}) \approx 94.78$. Moreover, the minimal cost for cylindrical domains is $\min_{x_0} J_\epsilon(x_0) \approx 183.98$, so that the performance index is $\Pi(\gamma_\text{opt}) \approx 48.48$\%. Observe that the cylindrical domains associated with $x_0\notin (0.25,0.75)$ do not verify the geometric optics condition. This highlights the necessity to use non-cylindrical domains. Compared to the simulation for $T = 2$, the optimal cost increases by a factor around $2.3$. Figure~\ref{fig:simu_EX3_T} displays the optimal control domain, the corresponding adjoint state $\varphi$, the uncontrolled and controlled wave over the optimal domain. We remark that the projection of the optimal domain on the $x$-axis covers the whole domain $\Omega$, in contrast with the domain associated with $T=2$. 

\begin{figure}[ht!]
\hspace{-0.5cm}
\centering
\begin{tabular}{cccc}
\includegraphics[width=0.23\textwidth]{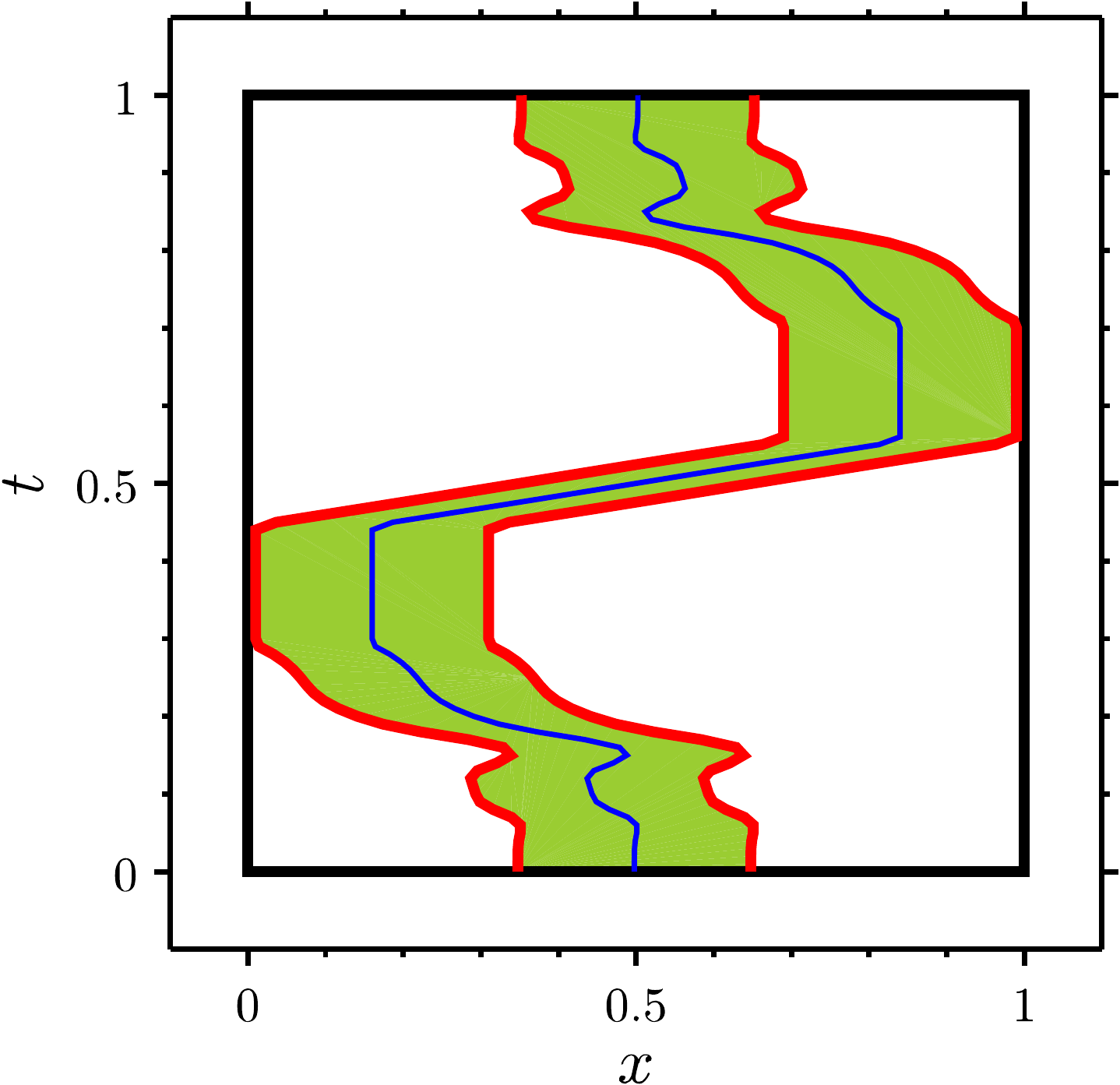} &
\includegraphics[width=0.23\textwidth]{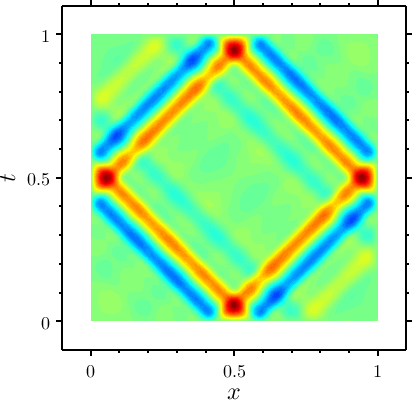} &
\includegraphics[width=0.23\textwidth]{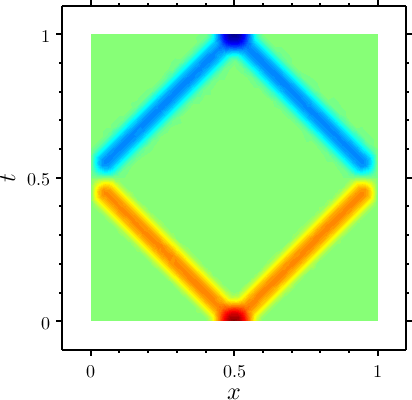} &
\includegraphics[width=0.23\textwidth]{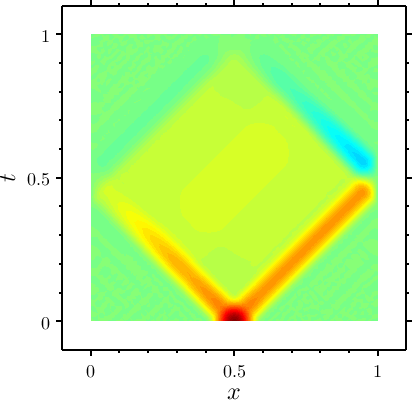}
\end{tabular}
\caption{{\bf (EX3)} -  From left to right, optimal control domain, isovalues of the corresponding adjoint state $\varphi$, isovalues of the uncontrolled and controlled wave over the optimal domain, for $T = 1$, for the initial curve $\gamma_0 \equiv 1/2$.}
\label{fig:simu_EX3_T}
\end{figure}

\vspace{0.2cm}
\par\noindent
$\bullet$ Eventually, in order to highlight the influence of the regularization parameter $\epsilon$ on the optimal domain, we now consider the initial datum $(y_0,y_1)$ given by
\begin{equation}\label{eq:EX4}\tag{\bf EX4}
    y_0(x) =
    \left\{
    \begin{array}{cl}
    3 x & \text{if } 0 \leq x \leq 1/3, \\
    3 (1 - 2 x) & \text{if } 1/3 \leq x \leq 2/3, \\
    -3 (1 - x) & \text{if } 2/3 \leq x \leq 1,
    \end{array}
    \right.
    \quad y_1(x) = 0, \qquad \text{for } x \in (0,1).
\end{equation}
For $T = 2$ and $\rho=10^{-5}$, we initialize the descent algorithm with the curve $\gamma \equiv 1/2$ and consider $\epsilon = 10^{-2}$ and $\epsilon = 0$. The numbers of iterations until convergence, the values of the functional $J_\epsilon$ evaluated at the optimal curve $\gamma_\text{opt}$ and the performance indices of $\gamma_\text{opt}$ are listed in Table~\ref{tab:simu_EX4}. For the initial datum~\eqref{eq:EX4}, the minimal cost for cylindrical domains is $\min_{x_0} J_\epsilon(x_0) \approx 47.71$.

\begin{table}[ht!]
\centering
\begin{tabular}{c|cc}
$\epsilon$ & $0$ & $10^{-2}$ \\
\hline
Number of iterations & $247$ & $389$ \\
$J_\epsilon(\gamma_\text{opt})$ & $60.35$ & $43.23$ \\
$\Pi(\gamma_\text{opt})$ & $-26.51$\% & $9.38$\% 
\end{tabular}
\caption{{\bf (EX4)} -  Number of iterations, optimal value of the functional $J_\epsilon$ and performance index, for $\epsilon \in \{ 0, 10^{-2} \}$, for the initial curve $\gamma_0 \equiv 1/2$.}
\label{tab:simu_EX4}
\end{table}

In Figure~\ref{fig:simu_EX4_gmm}, we clearly see the regularizing effect of $\epsilon$ and the need of regularization in this case, as the optimal domain obtained when $\varepsilon = 0$ is very oscillating.

\begin{figure}[ht!]
\hspace{-0.5cm}
\centering
\begin{tabular}{cccc}
\includegraphics[width=0.23\textwidth]{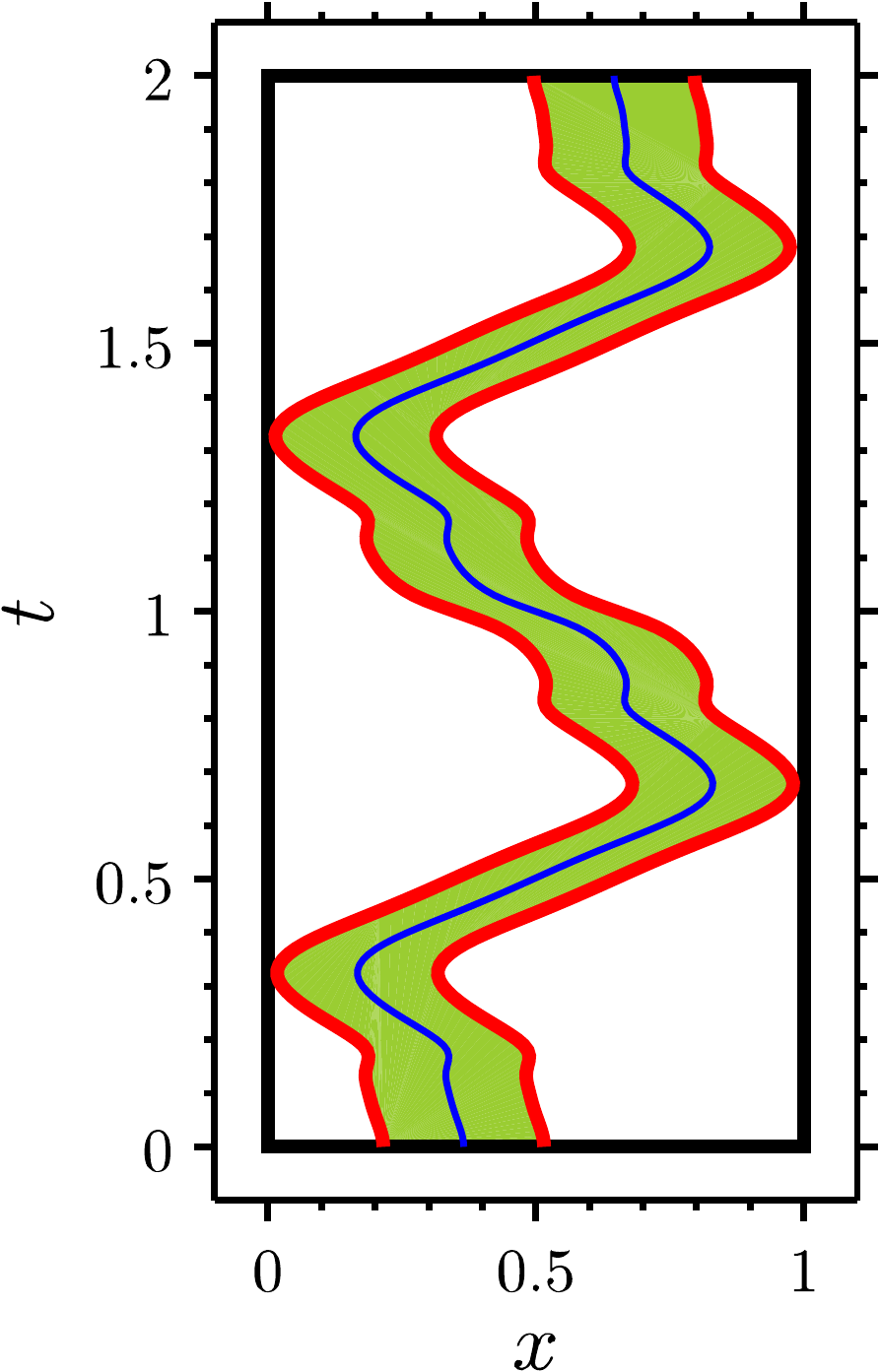} &
\includegraphics[width=0.23\textwidth]{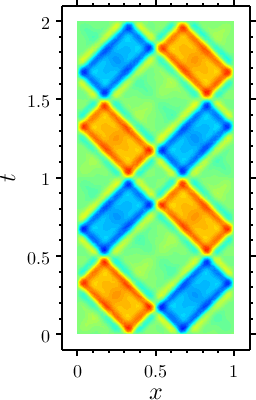} &
\includegraphics[width=0.23\textwidth]{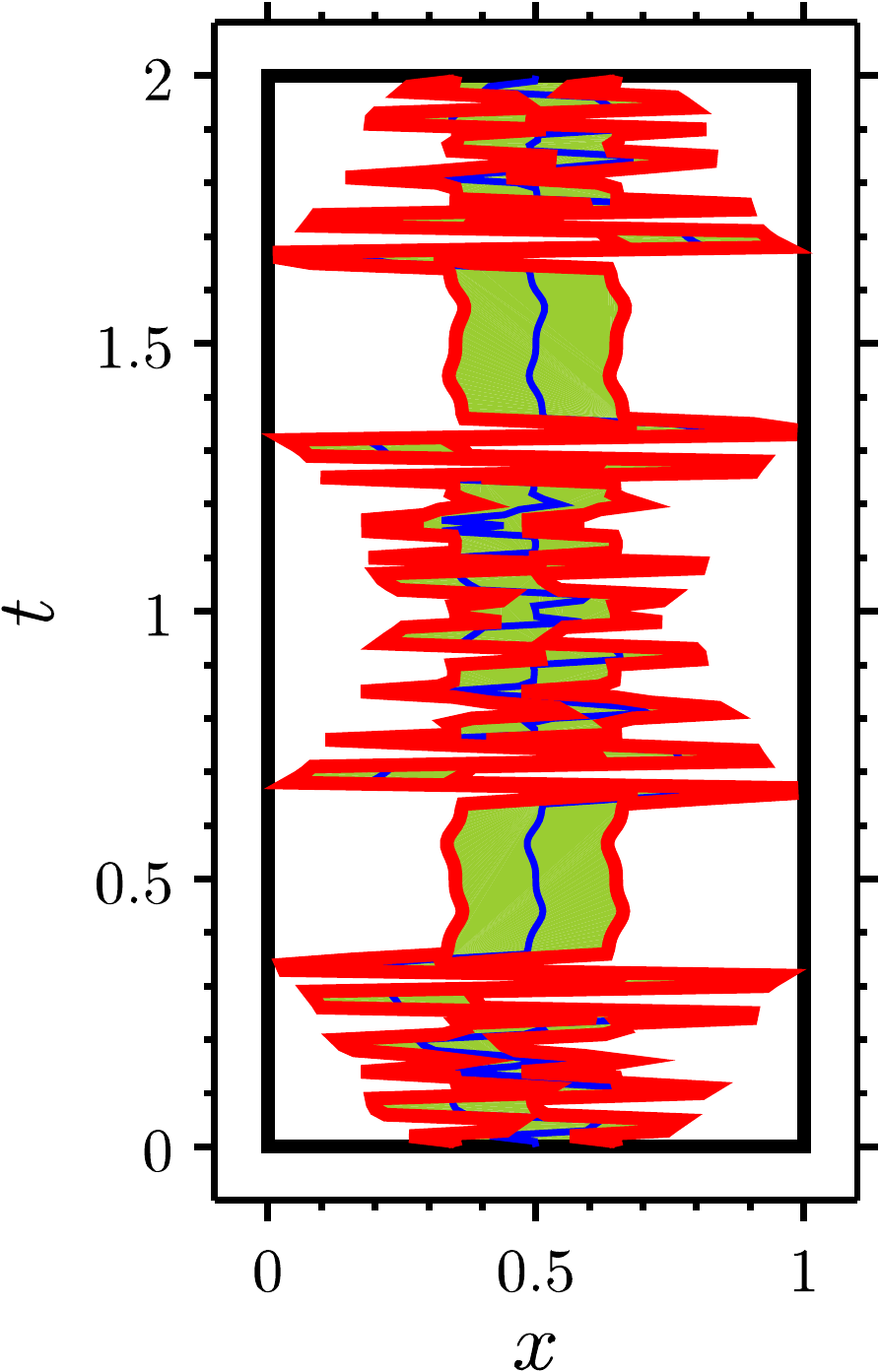} &
\includegraphics[width=0.23\textwidth]{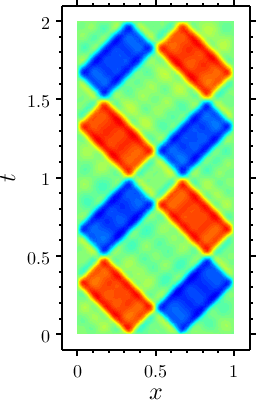}
\end{tabular}
\caption{{\bf (EX4)} - Optimal control domain and isovalues of the adjoint state for $\epsilon = 10^{-2}$ (left), $\epsilon = 0$ (right), for the initial curve $\gamma_0 \equiv 1/2$.}
\label{fig:simu_EX4_gmm}
\end{figure}

\subsection{Iterative approximation of the observability constant}
\label{sect:Cobs_num}

In this last part, we formally describe and use an algorithm allowing to approximate the observability constant appearing in~\eqref{eq:obs_W}, associated to any domain $q\subset Q_T$. The algorithm is based on the following characterization:
\begin{equation}\label{eq:Cobs_char}
    C_\text{obs}(q) = \sup_{\mathbf{y}_0 \in \mathbf{V}} \frac{\langle R \Lambda_q \mathbf{y}_0, \mathbf{y}_0 \rangle_\mathbf{V}}{\| \mathbf{y}_0 \|_\mathbf{V}^2}.
\end{equation}
where $\Lambda_q$ and $R$ are respectively the control operator associated to the domain $q$ and the duality operator between the space $\mathbf{W}$ and $\mathbf{V}$:
\begin{equation}\label{eq:Cobs_op}
    \Lambda_q :
    \left\{
    \begin{array}{ccc}
        \mathbf{V} & \to & \mathbf{W} \\
        \mathbf{y}_0 & \mapsto & \widehat{\boldsymbol{\varphi}}_0
    \end{array}
    \right.,
    \qquad R :
    \left\{
    \begin{array}{ccc}
        \mathbf{W} & \to & \mathbf{V} \\
        (\varphi_0, \varphi_1) & \mapsto & ((-\partial^2_x)^{-1} \varphi_1, -\varphi_0)
    \end{array}
    \right..
\end{equation}
In the definition of $\Lambda_q$, $\widehat{\boldsymbol{\varphi}}_0 \in \mathbf{W}$ is the minimum of the functional $\mathcal{J}^\star$ (cf.~\eqref{eq:J*}) associated to $\mathbf{y}_0 \in \mathbf{V}$. The characterization~\eqref{eq:Cobs_char} can be obtained by following the steps of~\cite[Section~2]{Munch18} and~\cite[Remark~2.98]{Coron07}. The main consequence of this characterization is that $C_\text{obs}(q)$ can be viewed as the largest eigenvalue of the operator $R \Lambda_q$ in $\mathbf{V}$. Consequently, we can formally adapt the power iteration method to our infinite-dimensional setting. The algorithm reads as follows. Let $\mathbf{y}_0^0 \in \mathbf{V}$ be given such that $\| \mathbf{y}_0^0 \|_\mathbf{V} = 1$. For $n \geq 0$, using the space-time finite element method described in~\cite[Section~3-4]{CastroCindeaMunch14}, we compute $\widehat{\boldsymbol{\varphi}}_0^n = \Lambda_q \mathbf{y}_0^n$ then set $\mathbf{z}_0^n = R \widehat{\boldsymbol{\varphi}}_0^n$ and $\mathbf{y}_0^{n+1} = \mathbf{z}_0^n/\| \mathbf{z}_0^n \|_\mathbf{V}$. We finally have $C_\text{obs}(q) = \lim_{n \to \infty} \| \mathbf{z}_0^n \|_\mathbf{V}$ while $\mathbf{y}_0^n$ converges in $\mathbf{V}$ to the most expensive initial datum to control.
For the control domain of Figure~\ref{fig:RN_q_ex}, this algorithm (after an appropriate space-time), initialized with $\mathbf{y}_0^0=K (x(1-x),0)$ - $K$ such that $\| \mathbf{y}_0^0 \|_\mathbf{V} = 1$-, produces the following sequence $\{\| \mathbf{z}_0^n \|_\mathbf{V}\}_{n\geq 0}=\{2.6895, 3.829, 3.981, 3.994, 3.997, \cdots\}$ converging toward the value $4$, in agreement with the result of Section~\ref{ssect:unif_obs_ex} based on a graph argument. The most expensive initial datum to control is displayed in Figure~\ref{fig:Cobs_PI}. Remark that the initial datum solution of \eqref{eq:Cobs_char} is not unique. 

\begin{figure}[ht!]
\centering
\begin{tabular}{cc}
\includegraphics[width=0.45\textwidth]{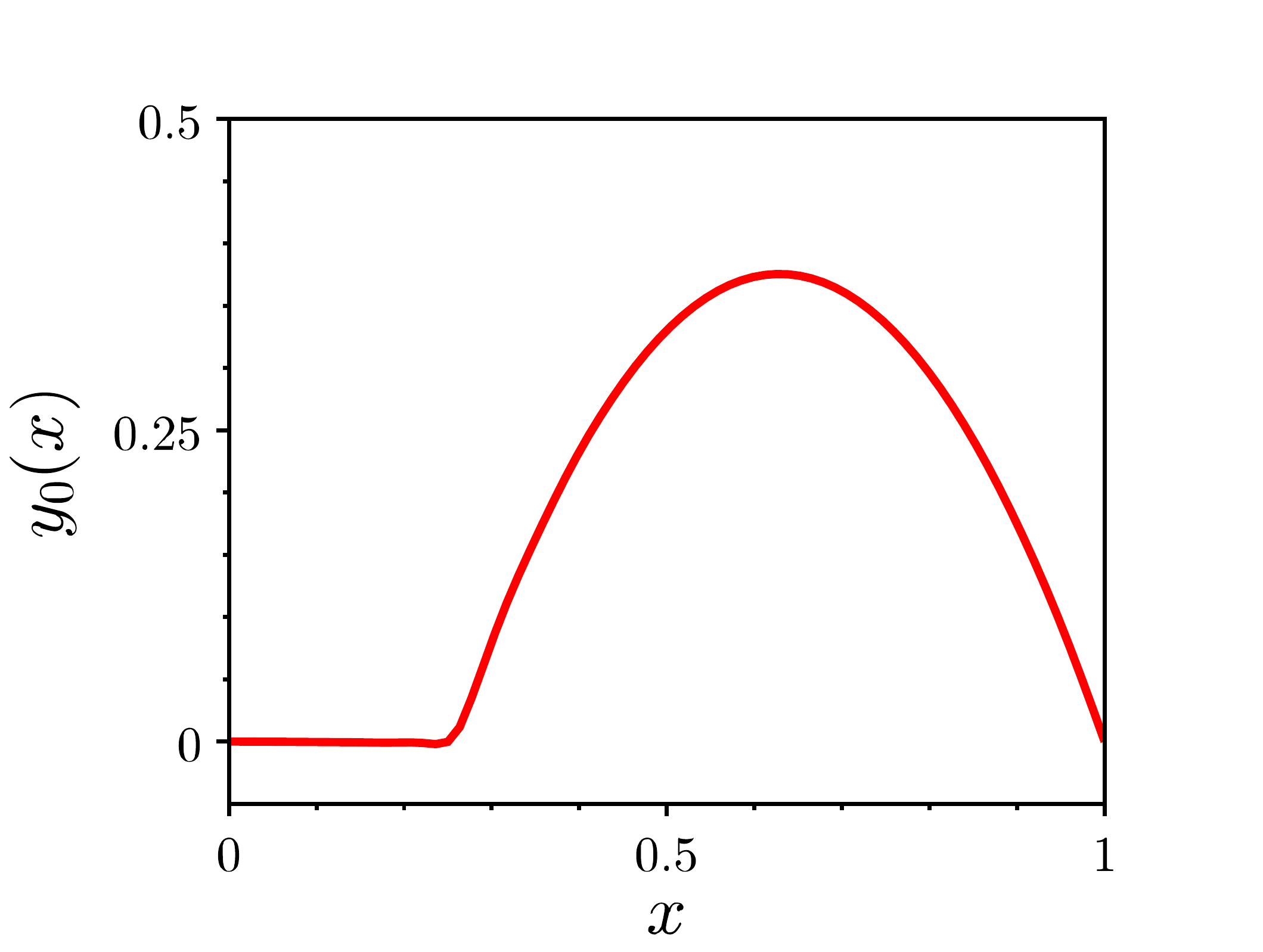} &
\includegraphics[width=0.45\textwidth]{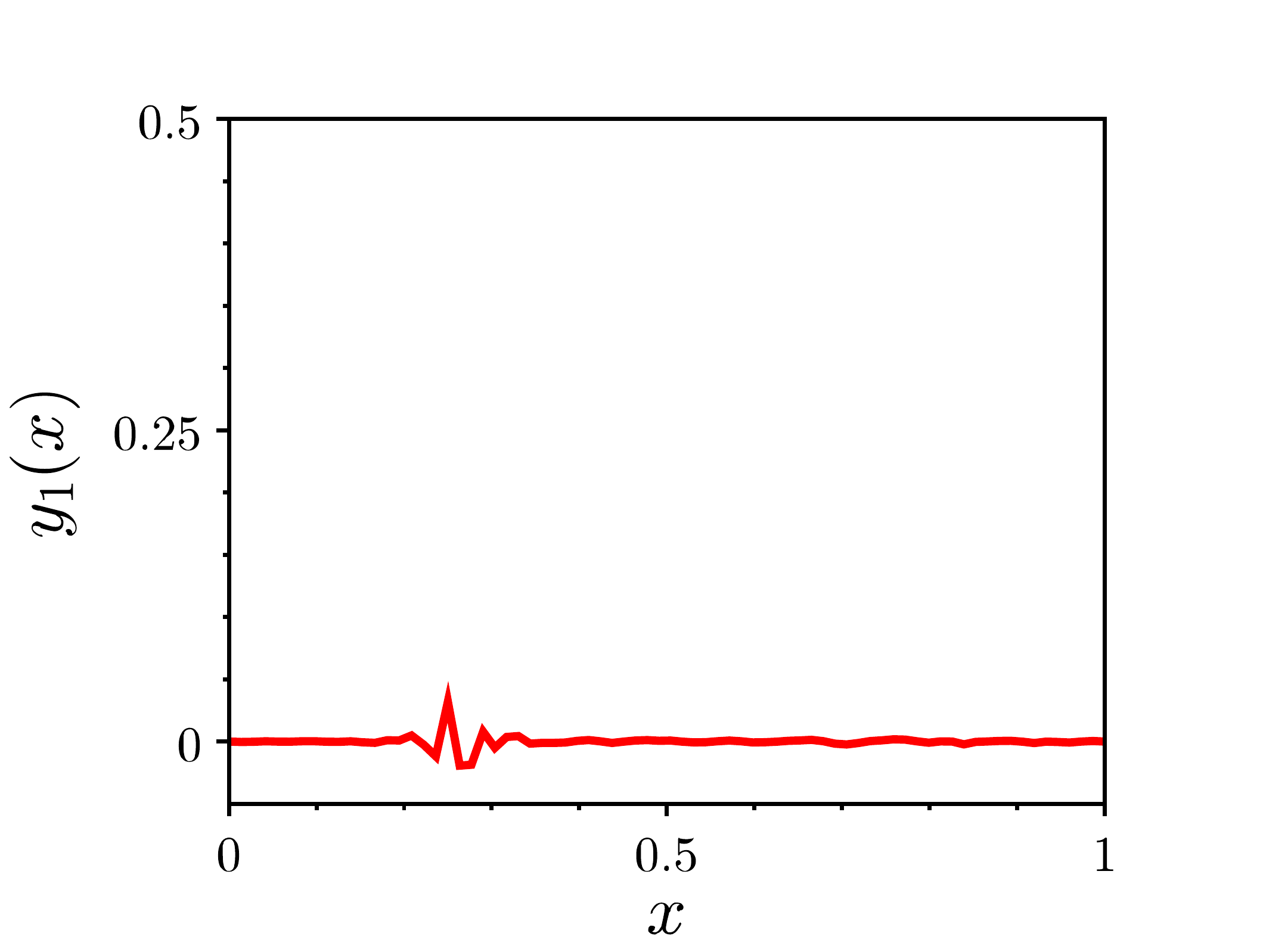}
\end{tabular}
\caption{Most expensive initial data $y_0$ (left) and $y_1$ (right) to be controlled.}
\label{fig:Cobs_PI}
\end{figure}

\section{Conclusion and perspectives}
\label{sect:conclusion}

Making use of the d'Alembert formulae for the solutions of the one dimensional wave equation, we have shown a uniform observability inequality with respect to the class of non cylindrical domains satisfying the geometric optics condition. The proof based on arguments from graph theory allows notably to relate the value of the observability constant to the spectrum of the Laplacian matrix, defined in term of the graph of any domain $q \subset Q_T$. The uniform observability property then allows to consider and analyze the problem of the control's optimal support associated to fixed initial conditions. For simplicity, the optimization is made over connected domains defined by regular curves. As expected, the optimal domains (approximated within a space-time finite element method) are closely related to the travelling waves generated by the initial conditions.

This work may be extended in several directions. First, the characterization of the observability constant in term of a computable eigenvalue problem in Section~\ref{sect:Cobs_num} may allow to consider the optimization of such constant with respect to the domain of observation, \emph{i.e.} $\inf_{q\in \mathcal{Q}_\text{ad}^\varepsilon} C_\text{obs}(q)$. Moreover, from an approximation point of view, we may also consider more general domains (than connected ones) and use, for instance, a level set method to describe the geometry (as done in~\cite{Munch08}). Eventually, this work may be adapted to the case of controls supported on single curves of $Q_T$, using the uniform observability property given in~\cite{Castro13}.

The extension of this work to the $N$-dimensional case studied in~\cite{Lebeau, Shao} is also a challenge. 

\bibliographystyle{siam}


\begin{thebibliography}{99}

\bibitem{BrouwerHaemers12}
{\sc A.~E. Brouwer and W.~H. Haemers}, {\em Spectra of graphs}, Universitext,
  Springer, New York, 2012.

\bibitem{Castro13}
{\sc C.~Castro}, {\em Exact controllability of the 1-{D} wave equation from a
  moving interior point}, ESAIM Control Optim. Calc. Var., 19 (2013),
  pp.~301--316.

\bibitem{CastroCindeaMunch14}
{\sc C.~Castro, N.~C\^{\i}ndea, and A.~M\"{u}nch}, {\em Controllability of the
  linear one-dimensional wave equation with inner moving forces}, SIAM J.
  Control Optim., 52 (2014), pp.~4027--4056.

\bibitem{Chung97}
{\sc F.~R.~K. Chung}, {\em Spectral graph theory}, vol.~92 of CBMS Regional
  Conference Series in Mathematics, Published for the Conference Board of the
  Mathematical Sciences, Washington, DC; by the American Mathematical Society,
  Providence, RI, 1997.

\bibitem{CindeaMunch15}
{\sc N.~C\^{\i}ndea and A.~M\"{u}nch}, {\em A mixed formulation for the direct
  approximation of the control of minimal {$L^2$}-norm for linear type wave
  equations}, Calcolo, 52 (2015), pp.~245--288.

\bibitem{Coron07}
{\sc J.-M. Coron}, {\em Control and nonlinearity}, vol.~136 of Mathematical
  Surveys and Monographs, American Mathematical Society, Providence, RI, 2007.

\bibitem{CuiLiuGao13}
{\sc L.~Cui, X.~Liu, and H.~Gao}, {\em Exact controllability for a
  one-dimensional wave equation in non-cylindrical domains}, J. Math. Anal.
  Appl., 402 (2013), pp.~612--625.

\bibitem{HaakHoang19}
{\sc B.~H. Haak and D.-T. Hoang}, {\em Exact observability of a 1-dimensional
  wave equation on a noncylindrical domain}, SIAM J. Control Optim., 57 (2019),
  pp.~570--589.

\bibitem{Hecht2012}
{\sc F.~Hecht}, {\em New development in {F}reefem++}, J. Numer. Math., 20
  (2012), pp.~251--265.

\bibitem{HenrotPierre05}
{\sc A.~Henrot and M.~Pierre}, {\em Variation et optimisation de formes},
  vol.~48 of Math\'{e}matiques \& Applications (Berlin) [Mathematics \&
  Applications], Springer, Berlin, 2005.
\newblock Une analyse g\'{e}om\'{e}trique. [A geometric analysis].

\bibitem{Humbert}
{\sc E.~Humbert, Y.~Privat, and E.~Tr\'{e}lat}, {\em Observability properties
  of the homogeneous wave equation on a closed manifold}, Comm. Partial
  Differential Equations, 44 (2019), pp.~749--772.

\bibitem{Khapalov95}
{\sc A.~Y. Khapalov}, {\em Controllability of the wave equation with moving
  point control}, Appl. Math. Optim., 31 (1995), pp.~155--175.

\bibitem{Lebeau}
{\sc J.~Le~Rousseau, G.~Lebeau, P.~Terpolilli, and E.~Tr\'{e}lat}, {\em
  Geometric control condition for the wave equation with a time-dependent
  observation domain}, Anal. PDE, 10 (2017), pp.~983--1015.

\bibitem{Lions88}
{\sc J.-L. Lions}, {\em Contr\^{o}labilit\'{e} exacte, perturbations et
  stabilisation de syst\`emes distribu\'{e}s. {T}ome 1}, vol.~8 of Recherches
  en Math\'{e}matiques Appliqu\'{e}es [Research in Applied Mathematics],
  Masson, Paris, 1988.
\newblock Contr\^{o}labilit\'{e} exacte. [Exact controllability], With
  appendices by E. Zuazua, C. Bardos, G. Lebeau and J. Rauch.

\bibitem{LiuYong99}
{\sc K.~Liu and J.~Yong}, {\em Rapid exact controllability of the wave equation
  by controls distributed on a time-variant subdomain}, Chinese Ann. Math. Ser.
  B, 20 (1999), pp.~65--76.
\newblock A Chinese summary appears in Chinese Ann. Math. Ser. A {{\bf{2}}0}
  (1999), no. 1, 142.

\bibitem{MartinRosierRouchon13}
{\sc P.~Martin, L.~Rosier, and P.~Rouchon}, {\em Null controllability of the
  structurally damped wave equation with moving control}, SIAM J. Control
  Optim., 51 (2013), pp.~660--684.

\bibitem{Mohar91}
{\sc B.~Mohar}, {\em The {L}aplacian spectrum of graphs}, in Graph theory,
  combinatorics, and applications. {V}ol. 2 ({K}alamazoo, {MI}, 1988),
  Wiley-Intersci. Publ., Wiley, New York, 1991, pp.~871--898.

\bibitem{Munch08}
{\sc A.~M\"{u}nch}, {\em Optimal design of the support of the control for the
  2-{D} wave equation: a numerical method}, Int. J. Numer. Anal. Model., 5
  (2008), pp.~331--351.

\bibitem{Munch09}
\leavevmode\vrule height 2pt depth -1.6pt width 23pt, {\em Optimal location of
  the support of the control for the 1-{D} wave equation: numerical
  investigations}, Comput. Optim. Appl., 42 (2009), pp.~443--470.

\bibitem{Munch18}
\leavevmode\vrule height 2pt depth -1.6pt width 23pt, {\em Numerical
  estimations of the cost of boundary controls for the equation
  {$y_t-\varepsilon y_{xx}+My_x=0$} with respect to {$\varepsilon$}}, in Recent
  advances in {PDE}s: analysis, numerics and control, vol.~17 of SEMA SIMAI
  Springer Ser., Springer, Cham, 2018, pp.~159--191.

\bibitem{Periago09}
{\sc F.~Periago}, {\em Optimal shape and position of the support for the
  internal exact control of a string}, Systems Control Lett., 58 (2009),
  pp.~136--140.

\bibitem{Shao}
{\sc A.~Shao}, {\em On {C}arleman and observability estimates for wave
  equations on time-dependent domains}, Proc. Lond. Math. Soc. (3), 119 (2019),
  pp.~998--1064.
  
\end{thebibliography}

\end{document}